\renewcommand{\thefootnote}%
{\fnsymbol{footnote}}
\title[H\"older foliations, revisited]{H\"older foliations, revisited %\\(Preliminary version)
}
\author{C. Pugh, M. Shub, and A. Wilkinson}
\thanks{Shub was partially supported by CONICET PIP 0801 2010-2012   and 
ANPCyT PICT 2010-00681.  Wilkinson was partially 
supported by the NSF.  The research in this paper was initiated while Pugh and Wilkinson were at Northwestern University.}
\address{Charles Pugh \\ Department of Mathematics \\University of Chicago\\ 5734 S. University Ave.\\
Chicago, Illinois 60637\\ USA \\ pugh@math.uchicago.edu {\em and  }
Department of Mathematics\\ University of California, Berkeley\\ 970 Evans Hall $\#$3840\\Berkeley, CA 94720 USA pugh@math.berkeley.edu}
\address{Michael Shub \\ CONICET, IMAS, Universidad de Buenos Aires, Buenos Aires, Argentina \\\emph{and} Department of Mathematics \\ The CUNY Graduate Center\\ 365 Fifth Avenue, Room 4208 \\ New York, NY 10016 \\  shub.michael@gmail.com}
\address{Amie Wilkinson \\ Department of Mathematics \\University of Chicago\\ 5734 S. University Ave.\\
Chicago, Illinois 60637\\ USA,  wilkinso@math.uchicago.edu}
\date{\today}
\theoremstyle{plain}
\def\title{\em}
\def\Bbb{\bf}
\def\cW{\mathcal{W}}
\def\cF{\mathcal{F}}
\def\cG{\mathcal{G}}
\def\cH{\mathcal{H}}
\def\transverse{\,\raise2pt\hbox to1em{\hfil$\top$\hfil}\hskip -1em \hbox
to1em{\hfil$\cap$\hfil}\,}
\newcommand\RR{{\mathbb R}}
\newlength{\figboxwidth} \setlength{\figboxwidth}{5.8in}
\theoremstyle{plain}
\newtheorem{Thm}{Theorem}
\newtheorem{Lemm}[Thm]{Lemma}
\newtheorem{Propn}[Thm]{Proposition}
\newtheorem{Add}[Thm]{Addendum}
\newtheorem*{Thm*}{Theorem}
\newtheorem*{Lemm*}{Lemma}
\newtheorem*{Propn*}{Proposition}
\newtheorem*{Cor*}{Corollary}
\newtheorem{Add*}{Addendum}
\newtheoremstyle{vThm}%
{}{}%
{\itshape}%
{0pt}{\bfseries}%
{}{ }%
{}%
\theoremstyle{vThm}
\newtheoremstyle{vThm*}%
{}{}%
{\itshape}%
{-3pt}{\bfseries}%
{}{ }%
{\thmnote{#3}}%
\theoremstyle{vThm*}
\newtheorem*{nThm*}{}
\newcounter{remark}
\theoremstyle{remark}
\newtheorem{Remark}[remark]{\bf Remark}
\newtheorem*{Rmk}{\bf Remark}
\newtheorem*{Ups}{\bf Upshot}
\newtheorem*{Defn}{\bf Definition}
\newcommand{\abs}[1]{\lvert#1\rvert}
\newcommand{\norm}[1]{\lVert#1\rVert}
\DeclareMathSymbol
{\rightrightarrows}
{\mathrel}{AMSa}{"13}
\DeclareSymbolFont{Csc}{U}{zcsc}{m}{n}
\DeclareMathSymbol{\bigsquareunion}{\mathop}{Csc}{"7C}
\DeclareMathSymbol{\bigintersection}{\mathop}{Csc}{"3C}
\DeclareMathSymbol{\bigunion}{\mathop}{Csc}{"3E}
\begin{document}

\large

\begin{abstract}
We investigate transverse H\"older regularity of some canonical leaf conjugacies in normally hyperbolic dynamical systems and   transverse H\"older regularity of some invariant foliations.  Our results validate   claims made elsewhere in the literature.
\end{abstract}

\maketitle

\section{Introduction}

A foliation $\mathcal{F}$ that is normally hyperbolic and plaque expansive with respect to a diffeomorphism $f$ is structurally stable in the following sense. For each $C^{1}$ small perturbation $g$ of $f$ there is a   $g$-invariant foliation $\mathcal{F}_g$ and a homeomorphism $\mathfrak{h}_{g} : M \rightarrow  M$ sending the original foliation equivariantly to $\mathcal{F}_g$.  Restricted to each $\mathcal{F}$-leaf, $\mathfrak{h}_{g}$ is $C^{1}$.    

In \cite{DK1} and \cite{DK2}  Damjanovi\'c  and   Katok assert that in the context of perturbations of Anosov actions (such as time one maps of Anosov flows) the homeomorphism $\mathfrak{h}_{g}$ can be chosen to satisfy a H\"older condition.  This does not follow from the standard fact proved by Anosov in \cite{A} and  the first two authors in \cite{HPS77} that the relevant $g$-invariant foliations are tangent to H\"older plane fields.  It is a subtler issue, and is a consequence of Theorem~A below.  

In \cite{IN11}  Ilyashenko and Negut treat the case of skew products, such as perturbations of an Anosov   diffeomorphism cross the identity.  We generalize their result to uniformly compact laminations in Theorem~B.    See Section~\ref{s.AB} for statements of Theorems~A and B.
The general question of when $\mathfrak{h}_{g}$ is H\"older remains open.

 \section{Background}
 \label{s.background}

A foliation $\mathcal{F}$ of a manifold $M$  is a division of $M$  into disjoint submanifolds called leaves of the foliation with the following properties.

\begin{itemize}

\item
Each leaf is connected although it need not be a closed subset of the manifold.

\item  The leaves all have the same dimension, say $c$.

\item  For each point $p$ in the manifold there exists a homeomorphism $\varphi $ from  $D^{c} \times  D^{m-c}$ onto a neighborhood of $p$ that carries each  $D^{c} \times  y$ to a subset of the leaf  containing $\varphi (0, y)$.  

\end{itemize}

$D^{c}$ is the open $c$-dimensional disc, $m$ is the dimension of the manifold $M$, and $m-c$ is the transverse dimension of the foliation.  Such a $\varphi $ is a \textbf{foliation box} and $\varphi (D^{c} \times y)$ is a \textbf{plaque} of the foliation.

\begin{Defn}
The \textbf{leaf topology} on $M$ is generated by the plaques in an atlas of foliation boxes.  It is denoted by $(M,\mathcal{F})$. The leaves are the connected components of $(M,\mathcal{F})$.  The \textbf{leaf space} is the set of leaves.  It is denoted by $M/\mathcal{F}$.
\end{Defn}

 Each leaf is a $c$-dimensional manifold covered by plaque coordinate neighborhoods.   The leaf topology $(M, \mathcal{F})$ projects to the discrete topology on $M/\mathcal{F}$.

The foliation $\mathcal{F}$ is $\pmb{f}$\textbf{-invariant} by  $f : M \rightarrow  M$ if $f$ permutes its leaves.  That is,
$$
\begin{CD}
 M @>\text{\normalsize
$\qquad 
f \qquad$}>> 
M
\\
@V\text{\normalsize$
\pi 
$}VV @VV\text{\normalsize$
\pi 
$}V
\\
M/\mathcal{F} @>\text{\normalsize$\qquad 
f  
\qquad$}>>
M/\mathcal{F}
\end{CD}
$$
commutes where $\pi $ projects the point    $p \in M$ to the leaf $\mathcal{F}(p)$ containing it.

\begin{Defn}
A \textbf{leaf conjugacy} from an $f$-invariant foliation $\mathcal{F} $ to a $g$-invariant foliation $\mathcal{G} $ is a homeomorphism $\mathfrak{h} : M \rightarrow  M$ sending $\mathcal{F} $-leaves to $\mathcal{G} $-leaves equivariantly in the sense that
$$
\begin{CD}
 M/\mathcal{F}  @>\text{\normalsize
$\qquad 
f \qquad$}>> 
M/\mathcal{F} 
\\
@V\text{\normalsize$
\mathfrak{h}
$}VV @VV\text{\normalsize$
\mathfrak{h}
$}V
\\
M/\mathcal{G}  @>\text{\normalsize$\qquad 
g 
\qquad$}>>
M/\mathcal{G} 
\end{CD}
$$
commutes.  In other words, $\mathfrak{h}(f(\mathcal{F}(p))) = g(\mathcal{G}(\mathfrak{h}(p)))$.
\end{Defn}

A foliation is smooth if there exists a covering of the manifold by foliation boxes, each of which is a    diffeomorphism.  Smooth foliations are studied widely in differential topology, but in    dynamics the naturally occurring foliations are only partially smooth.  In \cite{HPS77} the term ``lamination'' is used for this kind of foliation.  Here we suggest   revised terminology.  

\begin{Defn}
A foliation  is  \textbf{regular} 
if the manifold can be covered by foliation boxes   $\varphi  = \varphi (x,y)$ such that $\partial \varphi (x,y) /\partial x$ exists, is nonsingular, and depends continuously on $(x,y) \in D^{c} \times  D^{m-c}$.  A leaf conjugacy between regular invariant foliations is \textbf{regular} if its restriction to each leaf is $C^{1}$, non-singular, and these leaf derivatives are continuous on $M$.
\end{Defn}

The leaves of a regular foliation $\mathcal{F}$ of $M$ are $C^{1}$ and are assembled $C^{1}$-continuously.  The vectors tangent to its leaves   form a continuous subbundle  $T\mathcal{F} \subset TM$.   It is also natural to speak of a foliation being $C^{r}$ regular for $r > 1$.  Its leaves are $C^{r}$ and are assembled $C^{r}$-continuously.

If $\mathcal{F}$ is a regular foliation which is invariant by a diffeomorphism $f$ then  the tangent map $Tf : TM \rightarrow  TM$ sends $T\mathcal{F}$  isomorphically to itself and the diagram 
$$
\begin{CD}
 T\mathcal{F} @>\text{\normalsize
$\qquad 
Tf \qquad$}>> 
T\mathcal{F}
\\
@V\text{\normalsize$
\pi 
$}VV @VV\text{\normalsize$
\pi 
$}V
\\
M @>\text{\normalsize$\qquad 
f 
\qquad$}>>
M
\end{CD}
$$
commutes. (Here $\pi $ is the projection $TM \rightarrow  M$.)

\begin{Defn}
An $f$-invariant regular foliation $\mathcal{F}$ of a compact manifold $M$   is \textbf{normally hyperbolic}   if the tangent bundle of $M$ splits as a direct sum of continuous subbundles
$$
TM = E^{u} \oplus E^{c} \oplus E^{s}
$$
such that $Tf$ carries $E^{u}$, $E^{c} =T\mathcal{F}$,  and $E^{s}$ to themselves isomorphically, and for some Riemann structure on $TM$  we have
$$
T^{s}_{p}f < 1 < T^{u}_{p}f \quad \textrm{and} \quad T^{s}_{p}f < T^{c}_{p}f < T^{u}_{p}f  
$$
for all $p \in M$.  This is a shorthand expression where $T^{u}f$, $T^{c}f$, $T^{s}f$ are the restrictions of $Tf$ to the subbundles $E^{u}$, $E^{c}$, $ E^{s}$, and for linear transformations $A, B$   we write $A < B$ and $A < c < B$ to indicate
$$
\norm{A} < \pmb{m}(B) = \norm{B^{-1}}^{-1}  \quad \textrm{and} \quad 
\norm{A} < c < \pmb{m}(B) \ .
$$
 $\pmb{m}(B)$ is the \textbf{conorm} of $B$, the infimum of $\abs{B(u)}$ as $u$ varies over the unit vectors in the domain of $B$.
 \end{Defn}

 If we want to be more precise then  we choose continuous functions $\mu ,\nu , \widehat{\nu }, \widehat{\mu } :M \rightarrow  (0,1)$   and $\gamma , \widehat{\gamma } : M \rightarrow  (0, \infty )$ bracketting $T^{s}f$, $T^{c}f$, $T^{u}f$ in the sense that
 \begin{alignat*}{2}
 \mu (p) &< T^{s}_{p}f &&<  \nu (p)
\\
\gamma (p) &<T^{c}_{p} f &&<  (\widehat{\gamma }(p))^{-1}
\\
(\widehat{\nu }(p))^{-1} &<T^{u}_{p}f &&<  (\widehat{\mu }  (p))^{-1}\end{alignat*}
and we choose them so that $\nu < \gamma  < \widehat{\gamma }^{-1} < \widehat{\nu }^{-1}$.

A central result in \cite{HPS77}  concerns perturbations of a  normally hyperbolic  foliation $\mathcal{F}$.  

\begin{Thm}{\bf (Foliation Stability)}
\label{t.FST}
If   $\mathcal{F}$ is normally hyperbolic and plaque expansive (see below) then it is structurally stable in the following sense.  For each diffeomorphism $g$ that $C^{1}$-approximates $f$ there exists a unique  $g$-invariant foliation $\mathcal{F}_g$ near $\mathcal{F}$.  The foliation $\mathcal{F}_g$  is  normally hyperbolic, plaque expansive, and $(f, \mathcal{F})$ is canonically leaf conjugate to $(g, \mathcal{F}_g)$ by a homeomorphism $\mathfrak{h}  : M \rightarrow  M$.
\end{Thm}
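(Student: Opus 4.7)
My plan is to prove the theorem in three stages via the classical graph-transform approach: (i) construction of the invariant foliation $\mathcal{F}_g$; (ii) persistence of normal hyperbolicity and plaque expansivity; and (iii) construction of the leaf conjugacy $\mathfrak{h}$.

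For (i), I would parameterize candidate $c$-plane fields near $T\mathcal{F}$ as a function space $\Sigma$: fix a small $\varepsilon > 0$ and take $\Sigma$ to be the set of continuous sections of an appropriate disk bundle whose graphs lie within $\varepsilon$ of the plaques of $\mathcal{F}$. A $C^{1}$-perturbation $g$ acts on $\Sigma$ by pushing each candidate plaque forward under $g$ and then reprojecting along the transverse directions $E^{u}$ and $E^{s}$. The bracketing inequalities $\mu < \nu < \gamma < \widehat{\gamma}^{-1} < \widehat{\nu}^{-1} < \widehat{\mu}^{-1}$, together with the cone-field formulation of normal hyperbolicity, guarantee that for $g$ sufficiently close to $f$ in $C^{1}$, the graph transform $\Gamma_g : \Sigma \to \Sigma$ is well-defined and contracts in the $C^{0}$-metric. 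Its unique fixed point $\sigma_g$ integrates plaque-by-plaque to yield $\mathcal{F}_g$.

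For (ii), normal hyperbolicity of $(g, \mathcal{F}_g)$ is immediate because the bracketing inequalities are $C^{0}$-open in the plane field and $C^{1}$-open in the diffeomorphism, and $\sigma_g$ is $C^{0}$-close to $T\mathcal{F}$ by construction. Plaque expansivity of $\mathcal{F}_g$ likewise follows from a compactness argument, since the defining property (absence of nontrivial $\delta$-plaque pseudo-orbits) persists under small perturbations of both $f$ and $\mathcal{F}$. For (iii), the conjugacy $\mathfrak{h}$ is produced by a shadowing construction. Using analogous graph transforms, I would first produce the $g$-invariant center-stable and center-unstable foliations $\mathcal{F}_g^{cs}$ and $\mathcal{F}_g^{cu}$, whose leaves intersect along $\mathcal{F}_g$. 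For each $p \in M$, I would seek a point $\mathfrak{h}(p)$ whose $g$-orbit shadows the $f$-orbit of $p$ plaque-by-plaque, in the sense that $g^{n}(\mathfrak{h}(p))$ lies in the $\mathcal{F}_g$-plaque corresponding to the $\mathcal{F}$-plaque of $f^{n}(p)$ for every $n \in \mathbb{Z}$. Existence of such a shadow is extracted from intersecting appropriate center-stable and center-unstable manifolds; uniqueness up to plaque equivalence is exactly where plaque expansivity of $\mathcal{F}_g$ is used, and it is this uniqueness that makes $\mathfrak{h}$ well-defined on the leaf space and hence a leaf conjugacy.

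The main obstacle I anticipate is stage (iii). The graph-transform estimates of (i) and (ii) are delicate but essentially standard. The nontrivial step is upgrading the leaf-space bijection to an honest continuous $\mathfrak{h} : M \to M$ that is also $C^{1}$ along leaves: without plaque expansivity the shadowing point is only determined up to sliding along a plaque, and continuity of $\mathfrak{h}$ at leaves that accumulate on themselves (as happens generically in partially hyperbolic examples) can fail. Showing that plaque expansivity is precisely what is needed to rule this out, and that the resulting $\mathfrak{h}$ really is a homeomorphism of $M$ rather than merely a bijection of leaves, is the subtle core of the argument.
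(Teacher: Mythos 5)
The paper does not prove this theorem; it is cited from \cite{HPS77}, and Section~\ref{s.canonical} only recounts the construction of the leaf conjugacy. Measured against that account, your proposal has the right skeleton (graph transform, shadowing, plaque expansivity for uniqueness) but two of its steps hide genuine gaps. First, in stage (i) you assert that the fixed point of your graph transform ``integrates plaque-by-plaque to yield $\mathcal{F}_g$.'' This integration is precisely the nontrivial coherence problem: why should plaques obtained from a graph transform over different foliation boxes agree on overlaps and assemble into a global foliation? In \cite{HPS77} this is handled by working inside tubular neighborhoods $N(L,r)$ of each leaf, defining $L_g$ as the intersection of $g$'s local center-unstable and center-stable manifolds inside $N(L,r)$, and then showing that the $L_g$ are disjoint, cover $M$, and vary continuously. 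That consistency argument uses the shadowing uniqueness --- i.e., plaque expansivity of $f$ --- so stages (i) and (iii) of your plan are not independent; you cannot declare $\mathcal{F}_g$ constructed before the shadowing machinery is in place.

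Second, in stage (ii) you claim plaque expansivity of $(g,\mathcal{F}_g)$ ``follows from a compactness argument.'' This is not an obviously open condition, because the plaquation must be perturbed along with $f$ and $\mathcal{F}$, and there is no immediate dictionary between $g$-pseudo-orbits respecting the new plaquation and $f$-pseudo-orbits respecting the old one; in \cite{HPS77} one first builds the leaf conjugacy $\mathfrak{h}$ and then transfers plaque expansivity through it, so your ordering should be reversed. A related issue appears in stage (iii), where you invoke global $g$-invariant foliations $\mathcal{F}_g^{cs}$ and $\mathcal{F}_g^{cu}$: the bundles $E^{cs}$ and $E^{cu}$ need not integrate to foliations in general (this is exactly the dynamical coherence issue the paper discusses at length later), so the shadowing argument has to be run with the \emph{local} center-(un)stable manifolds inside tubular neighborhoods $N(L,r)$ rather than with global invariant foliations that may not exist.
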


See Section~\ref{s.canonical} for more details about $\mathfrak{h}$ and   clarification of the word ``canonically.''

\begin{Defn}
An $f$-invariant foliation is \textbf{plaque expansive} if there exist a plaquation (see below) $\mathcal{P}$ of $\mathcal{F}$ and a $\delta  > 0$ such that any two $\delta $-pseudo orbits of $f$ that respect  $\mathcal{P}$  and $\delta $-shadow each other necessarily belong to the same plaques of $\mathcal{P}$. 
\end{Defn}

\begin{Rmk}
A fundamental open question   is whether normal hyperbolicity implies plaque expansivity.  There are cases in which the implication is known, namely  
\begin{itemize}

\item
if the foliation $\mathcal{F}$ is of class $C^{1}$   \cite{HPS77},

\item
or as shown by Carrasco, if the leaves of $\mathcal{F}$ have uniformly bounded leaf volume \cite{Pablo},

\item
or as shown by Chillingworth and Hertz, Hertz and Ures, if $T^{c}f = Tf|_{T\mathcal{F}}$ is an isometry \cite{Chillingworth}, \cite{Federico},

\item or, as shown by Hammerlindl, if the  foliations $\cW^u$ and $\cW^s$ tangent to $E^u$ and $E^s$ are ``undistorted,'' for instance if $M$ is the $3$-torus \cite{Andy}.\footnote{The  definitions in the present paper are pointwise, not absolute as in \cite{Andy}, and do not imply  undistortedness.}

\end{itemize}
\end{Rmk}

 Here is a  more detailed description of plaque expansivity.  In the first place it generalizes the concept that the map $f$ is   expansive, meaning   there is a $\delta  > 0$ such that  for   any distinct orbits $(f^{n}(x))$ and $(f^{n}(y))$ there exists a $k \in \mathbb{Z}$ with 
 $$
 d(f^{k}(x), f^{k}(y)) > \delta  \ .
 $$
 ($d$ is a fixed metric on $M$.)
 In fact, if one considers the foliation of $M$ by its own  points then  leaves are points, plaques are points, and the two concepts coincide.

 The   idea is to replace   orbits of points by   orbits of plaques.  This is not quite possible because $f$ need not send plaques to plaques.  The $f$-image of a plaque may need to be adjusted (shrunk, stretched, or slid slightly along its leaf) in order to produce a new plaque of comparable size. 
 
 Formally, a 
   \textbf{plaquation} of $\mathcal{F}$ results from a choice of finitely many foliation boxes $\varphi  : D^{c} \times  D^{m-c} \rightarrow  M$ such that the corresponding half size foliation boxes $\varphi (\frac{1}{2}D^{c} \times  \frac{1}{2}D^{m-c})$  cover $M$.  The plaquation $\mathcal{P}$ consists of the unit  size plaques $\varphi (D^{c} \times  y)$.    They cover the leaves of $\mathcal{F}$ in a uniform fashion.
   
   A $\delta $-pseudo orbit of $f$ is a bi-infinite sequence of points  $(x_{n})$ such that for each $n \in \mathbb{Z}$, $d(f(x_{n}), x_{n+1}) < \delta $.   It \textbf{respects the plaquation}  $\mathcal{P}$ if  $f(x_{n})$ and $x_{n+1}$ always belong to a common plaque in $\mathcal{P}$. Plaque expansivity requires there to be a $\delta  > 0$ such that  if   $(x_{n})$ and $(y_{n} )$ are $\delta $-pseudo orbits that respect $\mathcal{P}$ and have $d(x_{n}, y_{n}) < \delta $ for all $n \in \mathbb{Z}$ then  $x_{n}$ and $y_{n}$ always lie in a common plaque   $\rho _{n} \in \mathcal{P}$.

 Equivalently, plaque expansivity means there are a plaquation $\mathcal{P}$ and  a $\delta  > 0$ such that for any sequences of plaques  $(\rho _{n})$ and $(\sigma _{n})$ in $\mathcal{P}$ with $f(\rho _{n}) \cap \rho _{n+1} \not=  \emptyset$ and  $f(\sigma _{n}) \cap \sigma _{n+1} \not=  \emptyset$, either there exists an $n$ such that the minimum distance between $\rho _{n}$ and $\sigma _{n}$ exceeds $\delta $ or   $\rho _{n} \cap \sigma _{n} \not= \emptyset$.  
In short, either plaque   orbits spread apart to distance $ > \delta $ or the plaques overlap.

It is not hard to see that plaque expansivity is independent of the metric $d$ and the plaquation $\mathcal{P}$.  

\begin{Rmk}
More general than normal hyperbolicity of $f$ is \textbf{partial hyperbolicity}.  One assumes that $TM$ has a   $Tf$-invariant splitting $ E^{u} \oplus E^{c} \oplus E^{s}$  as above, but $E^{c}$ is not necessarily integrable.  In this paper our focus is on normal hyperbolicity.   
\end{Rmk}

\section{Theorems A and B}
\label{s.AB}

The leaf conjugacy $\mathfrak{h}$ in the Foliation Stability Theorem above is $C^{1}$ on leaves and the leaf derivative is transversely continuous, but what about general transverse regularity?  Although   $\mathfrak{h} $ is not usually transversely differentiable \cite{A}, a natural guess would be that it can be chosen to satisfy a   H\"older condition in the transverse direction.  This is consistent with a remark of J\"urgen Moser to the effect that \emph{every conjugacy (and invariant structure) occurring naturally in smooth dynamics is H\"older.} 

\begin{nThm*}{\bf{Theorem A.}}
Suppose that $f : M \rightarrow  M$ is normally hyperbolic  at  $\mathcal{F}$ and the bundles $E^{cu}$,  $E^{cs}$ are of class $C^{1}$.  (This implies that $E^{c} = T\mathcal{F}$ is   $C^{1}$ and therefore $f$ is plaque expansive $\mathcal{F}$.)  If $g$ $C^{1}$-approximates $f$   then the canonical leaf conjugacy $\mathfrak{h} $ in the Foliation Stability Theorem is biH\"older continuous.  So are the holonomy maps along the leaves of the $g$-invariant foliations.
\end{nThm*}

BiH\"older continuity means what it says: The map and its inverse are H\"older continuous.  See Section~\ref{s.Holder} for estimates of the H\"older exponents.   

\bigskip

Theorem~B    concerns laminations  --  foliations of compact sets.
As defined in \cite{HPS77} a \textbf{lamination} $\mathcal{L}$ of a compact set   $\Lambda \subset M$ is a family of disjoint submanifolds (``leaves''  of the lamination) whose union is $\Lambda $ and which are assembled in a $C^{1}$ continuous fashion.  That is, $\Lambda $ is covered by ``lamination boxes,''  where a lamination box is a map $\varphi  : D^{c} \times  Y \rightarrow  \Lambda $, $Y$ is a fixed compact set, $\varphi $ is a homeomorphism to a relatively open subset of $\Lambda $,  and  $\partial \varphi (x,y)/\partial x$ is nonsingular and continuous with respect to $(x,y) \in D^{c} \times  Y$.  The discs $D^{c} \times  y$ are sent to plaques in the leaves.  Normal hyperbolicity of a diffeomorphism at an invariant lamination  is defined in the obvious way: $T_{\Lambda }M$ has a partially hyperbolic  $Tf$-invariant splitting $E^{u} \oplus E^{c} \oplus E^{s}$ with $E^{c} = T\mathcal{L}$.   An example is the orbits of an Axiom~A  flow on a basic set, such as a solenoid.  As shown in \cite{HPS77} the invariant manifold theory and the Foliation Stability Theorem hold equally in the lamination case.    

A simple type of lamination arises from a skew product   diffeomorphism  $f : B\times Z \rightarrow B\times Z$ where  $B$ and $Z$ are compact manifolds, and 
$$
f(b,z) = (f_{0}(b), f_{1}(b,z)) \ .
$$
Let $\Lambda _{0}$ be a hyperbolic set for $f_{0}$ 
 with hyperbolic splitting $T_{\Lambda _{0}}B = E^{u} \oplus E^{s}$.
   Then $\Lambda  = \Lambda _{0} \times Z$ is a compact $f$-invariant set smoothly laminated by the compact manifolds $b \times  Z$.   

If the hyperbolicity of the base map $f_{0}$ dominates $\partial f_{1}(b,z)/\partial z$ then $f$ is normally hyperbolic.  Plaque expansiveness is automatic: Hyperbolicity of $f_{0}$  implies $f_{0}$-orbits separate, which implies $f$-orbits of leaves separate.  Thus, $f$ pseudo-orbits of plaques separate.  

  The following is the main result in \cite{IN11}, which was proved earlier in a somewhat more specific context by   Ni\c tic\u a and   T\"or\"ok \cite{NT98}.

\begin{Thm}  \cite{IN11}\label{t.IN}
For a normally hyperbolic skew product lamination  as above, assume that     $E^{u}$, $E^{s}$ are trivial  product  bundles.  Also assume that the hyperbolic set $\Lambda _{0} \subset B$  has local product structure with respect to $f_{0}$.    If $g$ $C^{1}$-approximates $f$ then the leaf conjugacy is H\"older  and the holonomy maps along the leaves of the corresponding $g$-lamination are H\"older.  
\end{Thm}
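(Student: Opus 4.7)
The plan is to exploit the local product structure of $\Lambda_0$ and the triviality of $E^u, E^s$ to reduce H\"older regularity of $\mathfrak{h}$ and of the $\mathcal{L}_g$-leaf holonomies to the classical fact that stable and unstable holonomies for a hyperbolic set are H\"older. Although a general $C^1$-small perturbation $g$ of $f$ need not be a skew product, normal hyperbolicity gives it an invariant compact set $\Lambda_g$ laminated by $\mathcal{L}_g$, together with strong stable and strong unstable laminations $W^{ss}_g, W^{uu}_g$ tangent to the perturbed bundles $E^s_g, E^u_g$. These bundles are $C^0$-close to the trivial $E^s, E^u$, so they admit global trivializations, and their leaves are assembled in a H\"older continuous fashion in directions transverse to expansion or contraction by the standard Anosov H\"older estimate.

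For the H\"older property of $\mathfrak{h}$, take nearby points $p, q \in \Lambda$. Decompose $q$ relative to $p$ into a displacement inside the leaf $\mathcal{L}(p) = \{b\} \times Z$ (on which $\mathfrak{h}$ is $C^{1}$ by the Foliation Stability Theorem) and a transverse displacement coming from the base $\Lambda_0$. Using the local product structure of $\Lambda_0$, connect the base points of $p$ and $q$ by the concatenation of a short $W^u_{f_0}$ arc and a short $W^s_{f_0}$ arc, and lift these to $\Lambda$ using the trivializations of $E^u, E^s$. Under $\mathfrak{h}$ these arcs are matched by arcs in $W^{uu}_g$ and $W^{ss}_g$, whose lengths are controlled by the standard H\"older estimate for hyperbolic holonomy, with an exponent determined by bunching of the normal hyperbolicity rates $\nu, \gamma, \widehat{\gamma}, \widehat{\nu}$; composing gives that $\mathfrak{h}$ is H\"older. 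The same bracketing argument applied inside a single $\mathcal{L}_g$-leaf yields H\"older regularity for the $\mathcal{L}_g$-leaf holonomies between transversals, since these factor through the $W^{ss}_g$ and $W^{uu}_g$ holonomies.

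The main obstacle will be that the $C^{0}$ shadowing argument of Theorem~\ref{t.FST} which produces $\mathfrak{h}$ does not by itself provide a H\"older modulus; one has to redo the construction as a contraction on a Banach space of H\"older sections, with norms calibrated by the bracketing rates so that contraction persists in the H\"older topology. The triviality of $E^u, E^s$ is precisely what allows such a H\"older norm to be defined globally on $\Lambda$ without monodromy obstructions, and the local product structure of $\Lambda_0$ is what supplies a uniform transverse metric in which the H\"older decay can be checked. Once the H\"older exponent is shown to survive a single iterate (with a rate strictly less than $1$ determined by $\nu, \gamma, \widehat{\gamma}, \widehat{\nu}$), standard interpolation together with transverse continuity of the center derivative of $\mathfrak{h}$ closes the argument and makes the Theorem B estimate uniform in $g$ near $f$.
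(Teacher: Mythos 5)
Your proposal and the paper's proof go in genuinely different directions. The paper does not argue Theorem~\ref{t.IN} directly: it instead proves the strictly more general Theorem~B for any normally hyperbolic, dynamically coherent, \emph{uniformly compact} lamination, and observes that a skew product lamination is uniformly compact (all leaves are $b\times Z$). Theorem~B's holonomy estimate is a proof by contradiction: one supposes a sequence of holonomy ratios $d'_n/d_n^\theta \to \infty$, iterates forward until the ``big'' displacements reach a fixed size $r$, and then uses Epstein's theorem (every leaf of a uniformly compact lamination has arbitrarily small laminated neighborhoods) to pass to a limit leaf $P$ and project along a tubular neighborhood of $P$, reaching a contradiction with the $r/2$ diameter bound of the fibers. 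Leaf conjugacy then becomes a holonomy question via the suspension construction of Section~\ref{s.versus}. Neither the trivial-bundle hypothesis nor local product structure in $\Lambda_0$ is used anywhere; they are simply discarded as unnecessary. Your approach, by contrast, is the one native to the skew-product setting, close in spirit to the original argument of Ilyashenko--Negut and to this paper's proof of Theorem~A (amalgam map plus the Uniform H\"older Section Theorem~\ref{t.UHST}). What your approach buys is a direct, quantitative construction with an explicit contraction; what the paper's approach buys is far greater generality and freedom from the trivialization and product-structure hypotheses.

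That said, two of your steps are currently gaps rather than sketches. First, the assertion that ``under $\mathfrak{h}$ these arcs are matched by arcs in $W^{uu}_g$ and $W^{ss}_g$'' is not justified: the canonical leaf conjugacy moves points inside tubular fibers $N(p,r)$ transverse to the center leaves, and there is no a priori reason it should carry a $W^u_{f}$ arc onto a $W^{uu}_g$ arc point-for-point. Making that matching literal is exactly what requires the machinery the paper builds: the amalgam map $a$ which interpolates $f$-holonomy and $g$-stable sliding, the recognition of the leaf conjugacy as the invariant section of a fiber contraction over the strong-stable plaque bundle (Proposition~\ref{p.A}), and the suspension identity $h_p(p)=\mathfrak{h}_1(p)$ of Theorem~\ref{t.Amie1} which converts a conjugacy into a bona fide holonomy. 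Second, ``the standard Anosov H\"older estimate'' for the strong holonomies is a $C^2$ fact; since $g$ here is only $C^1$, one needs the $C^1$ variant with the weaker bunching $\widehat{\nu}<\widehat{\gamma}(\widehat{\nu}\mu)^\theta$ (or the analogous hypothesis that is actually verified through the section theorem), not the bunching used in the $C^2$ setting. If you intend the contraction-on-H\"older-sections route, you should state and prove the section theorem you need (with local, not global, bundle triviality, since the base of the relevant bundle is the non-separable union of global $g$-stable leaves) rather than invoke a classical estimate whose hypotheses do not match.
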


A special case of Theorem~\ref{t.IN} occurs when $Z$ is a single point.  The center unstable and center stable laminations are the unstable and stable laminations through the hyperbolic set $\Lambda _{0}$ of $f_{0}$.  The fact that their holonomy maps are H\"older was proved by   Schmeling and   Siegmund-Schultze in  \cite{SS}.  In particular, they showed that the stable and unstable foliations of an Anosov diffeomorphism have H\"older holonomy.    

\begin{Defn}
A lamination  whose leaves are compact and for which the leaf volume of the leaves is uniformly bounded is   a \textbf{uniformly compact lamination}.  
\end{Defn}

A skew product lamination is uniformly compact because its leaves are all the same, namely $b \times  Z$.

Let $\mathcal{L}$ be a normally hyperbolic lamination with splitting $E^{u} \oplus T\mathcal{L} \oplus E^{s}$.  In \cite{HPS77} and elsewhere it is shown that there exist unique $f$-invariant local laminations $\mathcal{W}^{u}$ and $\mathcal{W}^{s}$ tangent at $\Lambda $ to $E^{u}$ and $E^{s}$.  They are called the \textbf{strong} unstable and stable laminations, and are sometimes denoted as $\mathcal{W}^{uu}, \mathcal{W}^{ss}$.  In this paper we denote them as $\mathcal{W}^{u}, \mathcal{W}^{s}$.   Their leaves have plaques $W^{u}(p,r)$, $W^{s}(p,r)$ for $p \in \Lambda $; $f$ expands $W^{u}(p,r)$ across $W^{u}(f(p), r)$ and contracts $W^{s}(p, r)$ into $W^{s}(f(p), r)$.   
 The   \textbf{local center unstable manifold} and \textbf{local center stable manifold}  of a leaf $L \in \mathcal{L}$ are
 $$
 W^{cu}(L,r) = \bigcup_{p \in L} W^{u}(p,r) \quad \textrm{and} \quad W^{cs}(L,r) = \bigcup_{p \in L} W^{s}(p,r) \ .
 $$
 They are immersed but not necessarily embedded.  Their plaques are 
 $$
 W^{cu}(p,r) = \bigcup _{q \in W^{c}(p,r)}W^{u}(q,r) \qquad 
 W^{cs}(p,r) = \bigcup _{q \in W^{c}(p,r)}W^{s}(q,r) \ ,
 $$
 which depend continuously on $p$.

\begin{Defn}
A normally hyperbolic lamination $\mathcal{L}$ is \textbf{dynamically coherent} if its    center unstable and center stable plaques  intersect in sub-plaques  of the lamination.  More precisely,   if $z \in W^{cu}(p,r ) \cap W^{cs}(q ,r )$ then $z \in \Lambda $ and the intersection is an open subset of  $\mathcal{L}(z)$. 
\end{Defn}

\begin{Rmk}  
Dynamical coherence is automatic under the hypotheses of Theorem~A and Theorem~\ref{t.IN}.  
\end{Rmk}

\begin{nThm*}{\bf Theorem B.}
Suppose that $f : M \rightarrow  M$ is normally hyperbolic at a   dynamically coherent, uniformly compact lamination $\mathcal{L}$.  Then $\mathcal{L}$ is plaque expansive,   the leaf conjugacy in the Lamination Stability Theorem is biH\"older, and the leaf holonomies  are H\"older.  
\end{nThm*}

Of course Theorem~B includes  Theorem~\ref{t.IN}.  See Section~\ref{s.Holder} for estimates of the H\"older exponents.

\begin{Rmk}
The standard definition of dynamical coherence applies to foliations of the whole manifold, not to laminations of a subset.  One    assumes $f : M \rightarrow  M$ is partially hyperbolic and its center unstable and center stable subbundles integrate to  invariant   foliations.\footnote{A current, frequently used definition of dynamical coherence does not include this invariance property.  See \cite{BWcoherence}.} 
It follows that the leaves of these foliations intersect in a foliation at which $f$ is normally hyperbolic.   In contrast, the previous definition starts with a normally hyperbolic lamination and makes no assumption about integrability of the center unstable and center stable subbundles.  After all, these bundles are only defined at the laminated set $\Lambda $, so it may not make sense to  integrate them globally.  But what about the case that the lamination is a foliation of  $M$?
\end{Rmk}

\begin{Propn}
\label{p.newdc}
The two definitions of dynamical coherence are equivalent for foliations.
\end{Propn}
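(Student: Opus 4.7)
The plan is to prove the two implications separately. The forward direction (global integrability $\Rightarrow$ plaque-intersection) follows quickly from uniqueness of integral manifolds, while the reverse direction is the substantive content and requires building $\mathcal{W}^{cu}$ and $\mathcal{W}^{cs}$ leaf-by-leaf from the local plaque data.

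For Definition 2 $\Rightarrow$ Definition 1, suppose $f$ is partially hyperbolic with $f$-invariant foliations $\mathcal{W}^{cu}$, $\mathcal{W}^{cs}$ tangent to $E^{cu}$, $E^{cs}$, and set $\mathcal{F} = \mathcal{W}^{cu} \cap \mathcal{W}^{cs}$. The local plaque $W^{cu}(p,r) = \bigcup_{q \in W^{c}(p,r)} W^{u}(q,r)$ is a connected immersed submanifold through $p$ tangent to $E^{cu}$, so by uniqueness of integral manifolds it sits inside the $\mathcal{W}^{cu}$-leaf through $p$; the same reasoning places $W^{cs}(q,r)$ inside the $\mathcal{W}^{cs}$-leaf through $q$. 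Hence $W^{cu}(p,r) \cap W^{cs}(q,r)$ lies in $\mathcal{W}^{cu}(p) \cap \mathcal{W}^{cs}(q)$, which is a union of $\mathcal{F}$-leaves and locally is the $\mathcal{F}$-sub-plaque demanded by Definition 1.

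For Definition 1 $\Rightarrow$ Definition 2, I would declare the leaf of the candidate foliation $\widetilde{\mathcal{W}}^{cu}$ through $p$ to be
$$\widetilde{\mathcal{W}}^{cu}(p) = \bigcup_{q \in \mathcal{F}(p)} W^{u}(q),$$
with $W^{u}(q) = \bigcup_{n \geq 0} f^{n}(W^{u}(f^{-n}(q),r))$ the global strong unstable manifold, so that $f$-invariance is automatic and tangency to $E^{cu}$ is built in. The main task is to produce a local foliation chart near each $p$: show that the plaques $W^{cu}(s,r')$, as $s$ ranges over the local stable plaque $W^{s}(p,r)$, partition a neighborhood of $p$. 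Covering is easy because the regular foliations $\mathcal{F}$, $\mathcal{W}^{u}$, $\mathcal{W}^{s}$ give a local product structure, so every nearby point is reached from $p$ by sliding first along $W^{s}$, then along $\mathcal{F}$, then along $W^{u}$. The candidate $\mathcal{W}^{cs}$ is constructed symmetrically.

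The hard part, and the only place where Definition 1 enters essentially, is local non-overlap: if $s \neq s'$ in $W^{s}(p,r)$, then $W^{cu}(s,r') \cap W^{cu}(s',r')$ must be empty near $p$. Given a putative intersection point $z$, write $z \in W^{u}(q,r')$ with $q \in \mathcal{F}(s,r)$ and $z \in W^{u}(q',r')$ with $q' \in \mathcal{F}(s',r)$; uniqueness of the $W^{u}$-leaf through $z$ forces $q' \in W^{u}(q,r')$. The essential consequence of Definition 1 is that $W^{cs}(q,r)$ is locally $\mathcal{F}$-saturated, because for each $x \in W^{cs}(q,r)$ the intersection $W^{cu}(x,r'') \cap W^{cs}(q,r)$ is an $\mathcal{F}$-sub-plaque containing $x$. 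Applying this, $s \in \mathcal{F}(q,r) \subset W^{cs}(q,r)$, then $s' \in W^{s}(s,r) \subset W^{cs}(q,r)$, and saturation delivers $q' \in \mathcal{F}(s',r) \subset W^{cs}(q,r)$. Transversality of $E^{u}$ and $E^{cs}$ then gives $W^{u}(q,r') \cap W^{cs}(q,r) = \{q\}$ near $q$, forcing $q = q'$ and hence $\mathcal{F}(s) = \mathcal{F}(s')$ locally, which contradicts $\mathcal{F}(s) \cap W^{s}(p,r) = \{s\}$.
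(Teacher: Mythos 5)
The harder direction, which you label Definition~1 $\Rightarrow$ Definition~2, is where your argument diverges genuinely from the paper and where your approach is sound. The paper shows that the global center unstable sets $W^{cu}(L)$ are pairwise disjoint or identical by proving $L \cap W^{cu}(L')$ is both open and closed in the leaf topology of $L$ (the new definition gives openness, connectedness finishes the argument), and symmetrically for $\mathcal{W}^{cs}$. You instead build an explicit foliation chart near each $p$ by proving that the plaques $W^{cu}(s,r')$, $s\in W^s(p,r)$, are pairwise disjoint and cover a neighborhood, using local $\mathcal{F}$-saturation of $W^{cs}(q,r)$ (which is exactly what Definition~1 supplies) plus transversality of $E^u$ and $E^{cs}$. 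That chain of reasoning is correct modulo routine radius bookkeeping, and it is arguably more hands-on than the paper's leaf-topology argument: it exhibits a local product structure rather than inferring it from a dichotomy.

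The easy direction, Definition~2 $\Rightarrow$ Definition~1, has a gap. You assert that $W^{cu}(p,r)$ ``sits inside the $\mathcal{W}^{cu}$-leaf through $p$'' by ``uniqueness of integral manifolds.'' That step would fail for a generic partially hyperbolic system: the subbundle $E^{cu}$ is in general only H\"older continuous, and a continuous plane field, even one tangent to some foliation, can admit other integral submanifolds not contained in leaves of that foliation. What is true, and what the paper actually invokes, is a dynamical uniqueness: the strong unstable manifold $\mathcal{W}^u$ is characterized by backward contraction rates (HPS), and therefore subfoliates any $f$-invariant regular foliation tangent to $E^{cu}$; likewise $\mathcal{F}$ subfoliates $\mathcal{E}^{cu}$ by the Burns--Wilkinson coherence results. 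Together these give $\mathcal{E}^{cu} = \mathcal{W}^{cu}$ and hence the containment $W^{cu}(p,r) \subset \mathcal{E}^{cu}(p)$. You need that $f$-invariance is doing work here; ``uniqueness of integral manifolds'' of $E^{cu}$, without appealing to the dynamics, is not available. Replacing your one-line justification with the HPS/BW argument closes the gap, and the rest of your reasoning in that direction (intersecting the two containments and reading off the sub-plaque) is then fine.
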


Before we give the proof of this proposition, we remark that, in the case where the lamination ${\mathcal {L}}$ is a foliation, the hypothesis of dynamical coherence can be dropped: it  follows automatically from normal hyperbolicity and uniform compactness.  This was recently proved by Bohnet in her PhD thesis (see Theorem 1.26 in \cite{Boh}). 

\begin{proof}[\bf Proof]
Suppose the   foliation $\mathcal{F}$  is dynamically coherent as defined above.  The global center unstable manifolds $W^{cu}(L)$ for $L \in \mathcal{F}$   are tangent to $E^{cu}$, and we claim they foliate $M$.  Suppose that $W^{cu}(L)$ intersects $W^{cu}(L^{\prime})$ at $p \in L$.  (Then   the intersection contains $W^{u}(p)$.)  Let $\rho$ and $ \rho ^{\prime}$ be plaques of $W^{cu}(L)$ and $W^{cu}(L^{\prime})$ at $p $.  The new definition of dynamical coherence implies that   $W^{cs}(p,r )$ meets these plaques in relatively open subsets of the $\mathcal{F}$-leaf through $p$, namely $L$.  Therefore $L \cap W^{cu}(L^{\prime}) $ is relatively open in $L$.      
  See Figure~\ref{f.newdc}.
\begin{figure}[htbp]
\centering
\includegraphics[scale=.60]{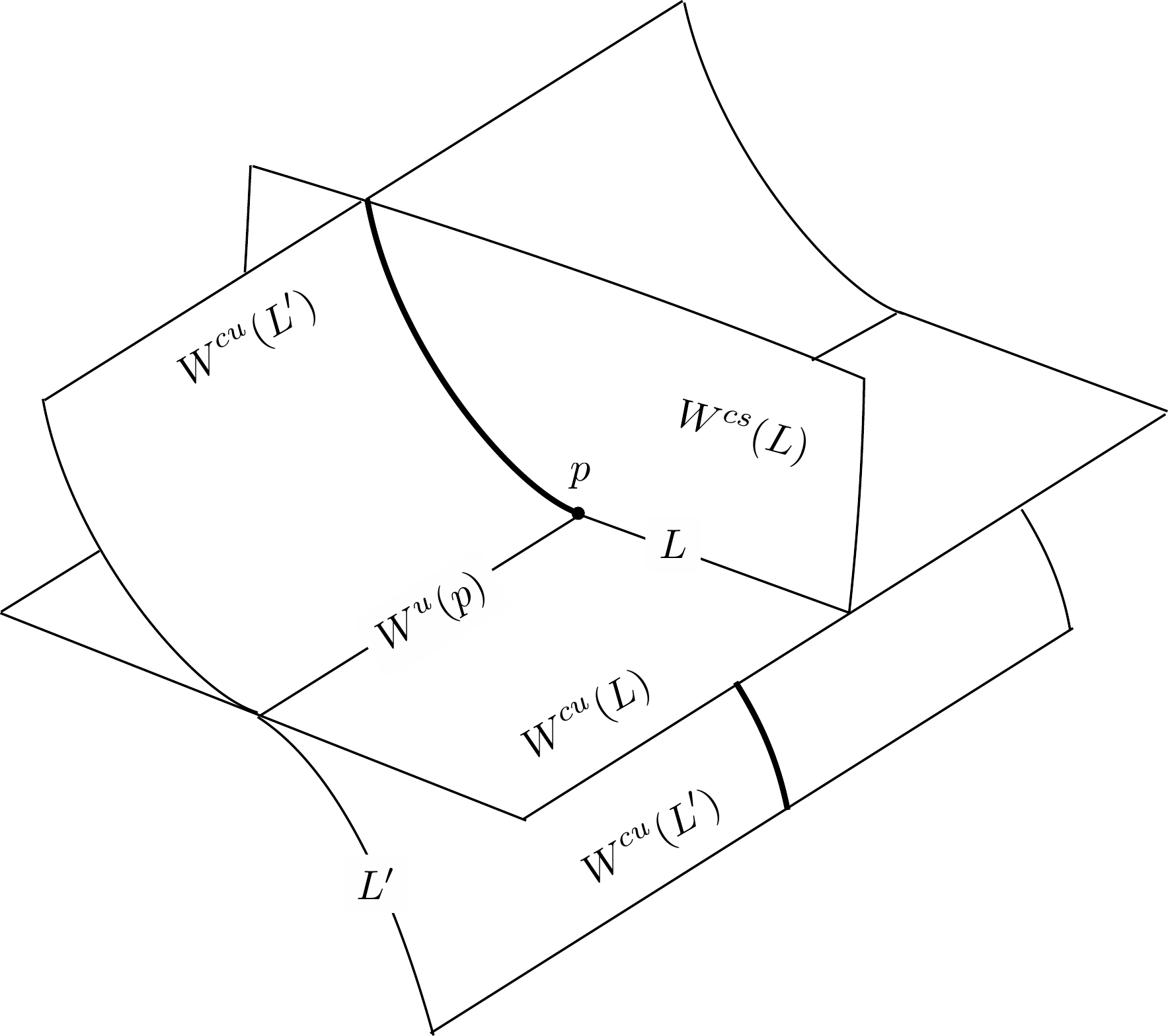}
\caption{Locally, the intersection $W^{cu}(L^{\prime}) \cap W^{cs}(L)$ is a plaque of the $\mathcal{F}$-leaf $L$ through $p$.  It is the dark curve and the new dynamical coherence condition implies that actually it    must equal $L$.}
\label{f.newdc}
\end{figure} 

With respect to the leaf topology  the intersection  is   closed in $L$.  Since the leaves are connected, $L \subset W^{cu}( L^{\prime})$.  Symmetrically, $L^{\prime} \subset  W^{cu}(L)$, so $W^{cu}(L) = W^{cu}(L^{\prime})$.  Moreover,   each $W^{cu}(L)$ is injectively immersed  --  it has no self-intersection  -- and thus
the global center unstable manifolds form a foliation that integrates $E^{cu}$.  
 Similarly the center stable manifolds form a foliation $\mathcal{W}^{cs}$ that integrates $E^{cs}$, so the new definition of dynamical coherence implies the standard one.

Conversely suppose   $f$ is partially hyperbolic and  $E^{cu}$, $E^{cs}$  integrate to    foliations $\mathcal{E}^{cu}$, $\mathcal{E}^{cs}$.  As shown in \cite{BWcoherence}, the foliations are $f$-invariant, the   intersection foliation $\mathcal{F}$  subfoliates each of them, and $f$ is normally hyperbolic at $\mathcal{F}$. Its normally hyperbolic splitting and its partially hyperbolic splitting coincide.   By uniqueness of the strong unstable leaves (see \cite{HPS77}) $\mathcal{W}^{u}$ subfoliates $\mathcal{E}^{cu}$.  Likewise $\mathcal{W}^{s}$ subfoliates $\mathcal{W}^{cs}$.  Thus $\mathcal{E}^{cu} = \mathcal{W}^{cu}$ and $\mathcal{E}^{cs} = \mathcal{W}^{cs}$, which means the intersection foliation for $\mathcal{W}^{cu}$ and $\mathcal{W}^{cs}$ is $\mathcal{F}$, as required by   the new definition of dynamical coherence.     
\end{proof}

Something of this survives for laminations.  
\begin{Propn}
\label{p.dclam}
If $\mathcal{L}$ is a normally hyperbolic, dynamically coherent lamination then its local center unstable manifolds meet in relatively open sets.  So do its local center stable manifolds.
\end{Propn}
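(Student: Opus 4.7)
Plan: Take $z \in W^{cu}(L,r) \cap W^{cu}(L',r)$ and aim to exhibit a neighborhood of $z$ in $W^{cu}(L,r)$ contained in $W^{cu}(L',R)$ for $R$ comparable to $r$ (the reverse inclusion is symmetric). By definition, $z \in W^u(p,r) \cap W^u(p',r)$ for some $p \in L$ and $p' \in L'$, both necessarily in $\Lambda$. Strong unstable leaves through points of $\Lambda$ are either disjoint or coincide globally, so $W^u(p) = W^u(p')$, and the triangle inequality in the strong unstable leaf metric gives $p \in W^u(p', 2r) \subset W^{cu}(p', 2r)$. Trivially, also $p \in W^{cs}(p, r)$.

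The main obstacle is that $z$ need not lie in $\Lambda$, so dynamical coherence is unavailable at $z$ itself. The remedy is that $p$ does lie in $L \subset \Lambda$, so we apply dynamical coherence at $p$: the intersection $V := W^{cs}(p, r) \cap W^{cu}(p', 2r)$ is a relative neighborhood of $p$ in $\mathcal{L}(p) = L$.

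Finally, sweep $V$ by strong unstable plaques: set $\widetilde V := \bigcup_{q \in V} W^u(q, r)$. For $q \in V \subset W^{cu}(p', 2r)$, the definition produces $q' \in W^c(p', 2r) \subset L'$ with $q \in W^u(q', 2r)$; the global leaf coincidence $W^u(q) = W^u(q')$ then yields $W^u(q, r) \subset W^u(q', 3r) \subset W^{cu}(L', 3r)$, so $\widetilde V \subset W^{cu}(L', 3r)$. On the other hand, because $V$ is a relative neighborhood of $p$ in $L$ and $W^{cu}(L, r)$ is locally parameterized near $z$ by $W^c(p, r) \times (W^u\text{-disk})$, the set $\widetilde V$ contains a neighborhood of $z$ in $W^{cu}(L, r)$. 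This establishes relative openness of the intersection in $W^{cu}(L,r)$ at $z$ (after uniformly adjusting the radii $r, 2r, 3r$ by rescaling). Swapping the roles of $u$ and $s$ throughout yields the center stable statement.
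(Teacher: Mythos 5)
Your proof is correct and takes essentially the same approach as the paper's, which simply cites the (local part of the) proof of the corresponding proposition for foliations; you spell out the two implicit steps --- sliding the basepoint from $z$ back to $p \in \Lambda$ along the common strong unstable leaf so that dynamical coherence applies, and then sweeping the resulting relatively open subset of $L$ back out along strong unstable plaques --- that the paper leaves tacit.
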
 
 \begin{proof}[\bf Proof]
The proof is the same as for foliations.  For it is local.
\end{proof}

 \section{The H\"older Exponents}
 \label{s.Holder}
 A map $f : X \rightarrow  Y$  from one metric space to another is $\theta $-H\"older if there is a constant $H$ such that for all $x, x^{\prime} \in X$ we have 
 $$
 d_{Y}(f(x), f(x^{\prime})) \leq Hd_{X}(x,x^{\prime}) ^{\theta } \ .
 $$
  Bunching and separation conditions among the bracketing functions $\mu , \nu , \gamma , \widehat{\gamma }, \widehat{\nu }, \widehat{\mu }$ described in Section~\ref{s.background} give estimates on the H\"older exponents of the normally hyperbolic summands, the leaf conjugacies, and the holonomies.      Recall that
   \begin{alignat*}{2}
 \mu (p) &< T^{s}_{p}f &&<  \nu (p)
\\
\gamma (p) &<T^{c}_{p} f &&<  (\widehat{\gamma }(p))^{-1}
\\
(\widehat{\nu }(p))^{-1} &<T^{u}_{p}f &&<  (\widehat{\mu } (p))^{-1}\end{alignat*}
for all $p \in M$, and   as functions
$$
0 < \mu <  \nu < 1 < \widehat{\nu }^{-1} < \widehat{\mu }^{-1} < \infty 
 \quad \textrm{and} \quad   \nu < \gamma  < \widehat{\gamma }^{-1} < \widehat{\nu }^{-1}   \ .
$$

  First we describe the known H\"older results when $f$ is $C^{2}$.     In \cite{A}, \cite{HPS77}, and elsewhere (e.g., in the work of Hasselblatt \cite{Hasselblatt}) it is shown that if $f$ is a $C^{2}$ partially hyperbolic diffeomorphism then the summands in its splitting are H\"older.  Specifically,  H\"olderness is implied by the bunching  conditions 
   \begin{itemize}
   \item[]  $E^{u}$ \;is $\theta $-H\"older  when $\widehat{\nu } < \widehat{\gamma }\mu ^{\theta }$.
      \item[ ]  $E^{s}$ \;is $\theta $-H\"older when $\nu  < \gamma \widehat{\mu }^{\theta }$.
         \item[ ]  $E^{cu}$ is $\theta $-H\"older when $\nu  < \gamma \mu ^{\theta }$.
            \item[ ]  $E^{cs}$ is $\theta $-H\"older when $\widehat{\nu } < \widehat{\gamma }\widehat{\mu} ^{\theta }$.
              \item[ ]  $E^{c}$ \;is $\theta $-H\"older when $\nu  < \gamma \mu ^{\theta }$ and $\widehat{\nu } < \widehat{\gamma }\widehat{\mu} ^{\theta }$.              
   \end{itemize} 
The notation is chosen so   the unstable conditions become the stable conditions by switching hats and non-hats.    As shown in \cite{Hasselblatt} and by Hasselblatt and Wilkinson in  \cite{HasselblattWilk} the    estimates are optimal in the $C^{2}$ case.  The  holonomy results for the strong foliations are similar: In \cite{PSW97} we show that   if $f$ is $C^{2}$ then
  \begin{itemize}
   \item  $\mathcal{W}^{u}$ \;has  $\theta $-H\"older  holonomy when $\widehat{\nu } < \widehat{\gamma }\mu ^{\theta }$.
  \item  $\mathcal{W}^{s}$ \;has  $\theta $-H\"older  holonomy when $\nu  < \gamma  \widehat{\mu }^{\theta }$.
   \end{itemize}

If $f$ is only  $C^{1}$ then it makes little sense to hope the summands are H\"older.  For they are $Tf$-invariant and  $Tf$ is only continuous.\footnote{In \cite{HasselblattWilk} it is shown that  if the holonomy is H\"older then the bundles are H\"older, correcting an assertion in \cite{PSW97}. The converse is false, as shown by Wilkinson in    \cite{W97}.}
But it does make sense to ask whether holonomy is H\"older.  For the invariant foliations are invariant by a $C^{1}$ diffeomorphism.  In  \cite{WLivsic} Wilkinson shows that  if $f$ is $C^{1}$ then 
   \begin{itemize}
   \item  $\mathcal{W}^{u}$ \;has  $\theta $-H\"older  holonomy when $\widehat{\nu}  < \widehat{\gamma} (\widehat{\nu} \mu )^{\theta }$.
  \item  $\mathcal{W}^{s}$ \;has  $\theta $-H\"older  holonomy when $\nu  < \gamma (\nu \widehat{\mu })^{\theta }$.
   \end{itemize}
We believe these  bunching    
conditions    are optimal for the strong holonomies but we have no proof.  We also have no proof that the other three types of holonomy (center unstable, center, and center stable) are H\"older in general.  What we    do prove in this paper are the following H\"older   assertions when      we perturb a normally hyperbolic  diffeomorphism whose invariant foliations are $C^{1}$ or when the center foliation is uniformly compact.

If $f$ is normally  hyperbolic, dynamically coherent,   its invariant foliations are of class $C^{1}$, and $g$ $C^{1}$-approximates $f$ then we will show 
   \begin{itemize}
     \item   $\mathcal{W}^{cu}_{g}$ has  $\theta^{2} $-H\"older  holonomy and the leaf conjugacy \\ $\mathfrak{h}^{cu} : \mathcal{W}^{cu} \rightarrow  \mathcal{W}^{cu}_{g}$  is  $\theta  $-H\"older when $\nu < \mu ^{\theta }$.

          \item  $\mathcal{W}^{cs}_{g}$ has  $\theta^{2} $-H\"older  holonomy and the leaf conjugacy \\ $\mathfrak{h}^{cs} : \mathcal{W}^{cs} \rightarrow  \mathcal{W}^{cs}_{g}$  is  $\theta  $-H\"older when $\widehat{\nu} < \widehat{\mu} ^{\theta }$.
                   \item  $\mathcal{W}^{c}_{g}$ has  $\theta^{2} $-H\"older  holonomy and the leaf conjugacy \\ $\mathfrak{h}^{c} : \mathcal{W}^{c} \rightarrow  \mathcal{W}^{c}_{g}$  is  $\theta  $-H\"older when 
                   $\nu < \mu ^{\theta }$ and $\widehat{\nu} < \widehat{\mu} ^{\theta }$.                     
   \end{itemize} 
(Recall that $\mathcal{W}^{c} = \mathcal{F}$ and $\mathcal{W}^{c}_{g} = \mathcal{F}_{g}$.)  See Section~\ref{s.A} for the proofs.
   
   If the center lamination of a $C^{1}$ normally hyperbolic, dynamically coherent diffeomorphism is uniformly compact then    we will show that
   \begin{itemize}

  \item  $\mathcal{W}^{c}$ \;has  $\theta $-H\"older  holonomy inside the center unstable leaves when $\widehat{\nu } < \widehat{\mu }^{\theta }$.

    \item  $\mathcal{W}^{c}$ \;has  $\theta $-H\"older  holonomy inside the center stable leaves when $\nu  < \mu ^{\theta }$.
    
     \item  $\mathcal{W}^{c}$ \;has  $\theta $-H\"older  holonomy   when $\widehat{\nu } < \widehat{\mu }^{\theta }$ and $\nu  < \mu ^{\theta }$.

   \end{itemize}
Furthermore, if $g$ $C^{1}$-approximates $f$  then    a canonical  leaf conjugacy $\mathcal{L} \rightarrow  \mathcal{L}_{g}$ is $\theta  $-H\"older when $\widehat{\nu } < \widehat{\mu }^{\theta }$ and $\nu  < \mu ^{\theta }$.     See Section~\ref{s.C} for the proofs.

\section{The Canonical Leaf Conjugacy}
\label{s.canonical}

The leaf conjugacy in the Foliation Stability Theorem is constructed in \cite{HPS77} as follows.  A smooth approximation $\widetilde{E}$ to $E^{u} \oplus E^{s}$ is chosen and   exponentiated into $M$.    This gives a smooth immersed tubular neighborhood $N(L, r)$ of each leaf $L \in \mathcal{F}$.  It is the union of tubular fibers  
$$
N(p,r) = \exp   \widetilde{E}(p,r)  
$$
for $p \in L$.  If $r$ is uniformly small and distinct points $p, q$ lie in a common plaque  $\rho $ of $L$ then   $N(p,r)$ and $N(q,r)$  are disjoint, but  for points $p, q$ in different plaques the fibers may meet badly.     

  Inside each $N(L,r)$ are local center unstable and center stable $f$-invariant manifolds that intersect in $L$.    Applying graph transform ideas to the diffeomorphism $g$ that approximates $f$  we get local center unstable and center stable manifolds for $g$ in $N(L,r)$.  Their intersection is $L_{g}$.    These  $L_{g}$ form a $g$-invariant  foliation $\mathcal{F}_g$.  It is the unique $g$-invariant foliation whose leaves approximate the leaves of $\mathcal{F}$.  

The leaf  conjugacy is a homeomorphism $\mathfrak{h} : M \rightarrow  M$ sending $L$ to $L_{g}$.  Specifically it sends $p \in L$ to the unique  point of the tubular fiber  $N(p,r)$ whose $g$-orbit can be closely shadowed by an $f$ pseudo-orbit that respects a fixed  plaquation $\mathcal{P}$ of $\mathcal{F}$.  In terms of what $\mathfrak{h}$ does to leaves, it is unique:  $L_{g} = \mathfrak{h}(L)$ is uniquely determined by $g$ and $L$.  However, as a point map $\mathfrak{h}$ depends on the choice of the  tubular neighborhood structure $\mathcal{N} = \{N(p,r)\}$.  A different choice of smooth approximation to $E^{u}\oplus E^{s}$ and  a different choice of smooth exponential map give a different tubular neighborhood structure $\mathcal{N}^{\prime}$, different tubular fibers, and consequently a different leaf conjugacy $\mathfrak{h}^{\prime}$.  

The relation between $\mathfrak{h}$ and $\mathfrak{h}^{\prime}$ is   simple.  They are homotopic by a homotopy $\mathfrak{h}_{t}$ that moves points a short distance in the plaques of $\mathcal{F}_g$.  For $\mathfrak{h}(p)$ and $\mathfrak{h}^{\prime}(p)$ belong to a common plaque $\rho $ of $\mathfrak{h}(L)$, so we can  draw the short geodesic $\gamma (t)$, $0 \leq  t \leq  1$,  from $\mathfrak{h}(p)$ to $\mathfrak{h}^{\prime}(p)$  and   project it to a path $\beta (t) $ in  $\rho $ using $\mathcal{N}$.  The   paths $\beta (t)$ give the homotopy.

So in this sense the leaf conjugacy is canonical: It is  unique as a leaf map,  it is unique as a plaque map, and as a point map it is unique up to a short plaque preserving homotopy.   

Finally, we weaken the smoothness of the tubular neighborhoods and speak also of laminations.

\begin{Defn}
A \textbf{tubular neighborhood structure} for a lamination $\mathcal{L}$ is a choice of $C^{1}$ discs $N(p,r)$ which are uniformly approximately tangent to $E^{u} \oplus  E^{s}$ at $\Lambda $ such that for each plaque $\rho $ in a plaquation of $\mathcal{L}$, the union of the tubular fibers $N(p,r)$ through points $p \in \rho $ forms a tubular neighborhood of $\rho $.  If the tubular neighborhood structure results from   exponentiating a smooth approximation to $E^{u} \oplus E^{s}$ it is called smooth. 
\end{Defn}  

The following   summarizes to what extent leaf conjugacies for foliations and laminations are canonical.   

\begin{Propn}
\label{p.canonical}
Suppose that $f$ is normally hyperbolic and plaque expansive at the dynamically coherent lamination $\mathcal{L}$.  Let $\mathcal{N}$ and $\mathcal{N}^{\prime}$ be tubular neighborhood structures for $\mathcal{L}$. 
 If $g$ $C^{1}$-approximates $f$ then the leaf conjugacies  $\mathcal{L} \rightarrow  \mathcal{L}_{g}$ corresponding to $\mathcal{N}$ and $\mathcal{N}^{\prime}$ are leaf canonical, plaque canonical, and  homotopic by a short homotopy in the plaques of $\mathcal{L}_{g}$.  
\end{Propn}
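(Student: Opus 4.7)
The plan is to establish in turn the three assertions---leaf canonicity, plaque canonicity, and existence of a short plaque-preserving homotopy---by leveraging the pseudo-orbit construction of $\mathfrak{h}$ recalled in Section~\ref{s.canonical}.

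Leaf canonicity is essentially immediate from the uniqueness in the Lamination Stability Theorem. The image leaf $L_{g} = \mathfrak{h}(L)$ is intrinsically characterized as the unique $g$-invariant leaf near $L$, cut out as the intersection of the local $g$-invariant center unstable and center stable manifolds through $L$. This characterization is independent of any tubular neighborhood structure, so $\mathfrak{h}(L) = \mathfrak{h}^{\prime}(L) = L_{g}$ for every leaf $L$ of $\mathcal{L}$.

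For plaque canonicity I would fix a plaquation $\mathcal{P}$ of $\mathcal{L}$ witnessing plaque expansivity at scale $\delta$. Given $p \in L$, the points $q = \mathfrak{h}(p)$ and $q^{\prime} = \mathfrak{h}^{\prime}(p)$ are by construction the unique points of $N(p,r) \cap L_{g}$ and $N^{\prime}(p,r) \cap L_{g}$ whose $g$-orbits $\delta$-shadow $f$-pseudo orbits through $p$ that respect $\mathcal{P}$. The key check is that the two $g$-orbits $(g^{n} q)$ and $(g^{n} q^{\prime})$ themselves form a pair of $\delta$-pseudo orbits of $f$ which mutually $\delta$-shadow and both respect $\mathcal{P}$, provided $r$ and $\norm{g - f}_{C^{1}}$ are sufficiently small. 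Plaque expansivity of $\mathcal{L}_{g}$---inherited from $\mathcal{L}$ when $g$ is close to $f$---then forces $g^{n} q$ and $g^{n} q^{\prime}$ to share a common plaque of $\mathcal{L}_{g}$ for every $n$; the case $n = 0$ is plaque canonicity.

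For the homotopy, once $q$ and $q^{\prime}$ lie in a common plaque $\sigma$ of $\mathcal{L}_{g}$, I would join them by a short geodesic $\gamma(t)$ in $M$ and project fiberwise onto $\sigma$ using the tubular structure $\mathcal{N}$: for base points near $p$ the tubular fibers of $\mathcal{N}$ meet $\sigma$ transversely in single points, yielding a continuous path $\beta(t) \in \sigma$ from $q$ to $q^{\prime}$. Setting $\mathfrak{h}_{t}(p) = \beta(t)$ produces the required plaque-preserving homotopy, with continuity in $p$ following from continuity of both leaf conjugacies and continuous dependence of $\sigma$ on $p$. The main obstacle is the plaque canonicity step: one must calibrate the uniform constants---plaque expansivity scale $\delta$, tubular fiber radius $r$, the $C^{1}$-distance from $g$ to $f$, and the discrepancy between $\mathcal{N}$ and $\mathcal{N}^{\prime}$---so that both shadowing pseudo orbits respect the same $\mathcal{P}$ and the two $g$-orbits remain within $\delta$. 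Once these quantitative estimates are in place, the remaining parts reduce to the geometric outline already given in Section~\ref{s.canonical}.
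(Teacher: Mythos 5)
The paper does not prove Proposition~\ref{p.canonical} with a separate proof environment; the argument is the discussion that precedes the proposition in Section~\ref{s.canonical}. Your treatment of leaf canonicity (uniqueness of $L_g$ as the nearby $g$-invariant leaf) and of the homotopy (project a short geodesic into the plaque via $\mathcal{N}$) matches that discussion exactly. It is on the plaque canonicity step that you depart from the paper and run into trouble.

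The paper's implicit argument for plaque canonicity is geometric: inside a tubular neighborhood $N(\rho, r)$ (resp.\ $N'(\rho, r)$) of the plaque $\rho \ni p$, the leaf $L_g$ is a single $C^1$ graph over $\rho$, and $\mathfrak{h}(p) = N(p,r)\cap L_g$, $\mathfrak{h}'(p) = N'(p,r)\cap L_g$ both land on that one local sheet of $L_g$ lying over $\rho$. Both points are therefore in the one plaque of $L_g$ that sits over $\rho$, with no appeal to plaque expansivity. Your argument instead wants to deduce the common plaque by applying plaque expansivity of $\mathcal{L}_g$ to the two genuine $g$-orbits $(g^n q)$ and $(g^n q')$. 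The step where this breaks down is your ``key check'' that these two $g$-orbits mutually $\delta$-shadow, with $\delta$ the plaque expansivity constant. A priori you only control $d(q,q')$ at time zero; the shadowing of $f$-pseudo orbits gives that each $g$-orbit stays within $r$ of \emph{some} respecting pseudo-orbit, but two pseudo-orbits starting in a common plaque and respecting $\mathcal{P}$ can drift apart by up to a plaque diameter at each step, so the best a priori bound on $d(g^n q, g^n q')$ is of the order of a plaque diameter, not of $\delta$. In fact, showing the two $g$-orbits stay $\delta$-close is essentially equivalent to showing $q$ and $q'$ are in a common plaque in the first place, so the argument as structured is circular. You flag this as ``the main obstacle'' but offer only the unsubstantiated hope that the constants can be calibrated; there is no calibration that makes this work, because the obstruction is not quantitative but logical. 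The fix is to drop plaque expansivity here and use the local graph structure of $L_g$ over $L$ within the tubular neighborhoods, as the paper does.
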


\section{Holonomy Intersection}
\label{s.intersection}
The following   lemma is used to deduce properties of the center foliation from facts about the center unstable and center stable foliations.

\begin{Lemm} {\bf (Holonomy Intersection)}
\label{l.Hproduct}
If transverse regular foliations  have $\theta $-H\"older holonomy then so does the intersection foliation.
\end{Lemm}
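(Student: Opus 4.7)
The plan is to analyze the intersection--foliation holonomy $h : \Sigma_p \to \Sigma_q$ between transversals by exploiting a product decomposition of $\Sigma_p$ induced by $\mathcal{F}_1$ and $\mathcal{F}_2$. Choose $\Sigma_p$ and $\Sigma_q$ to be $C^{1}$ disks transverse to $\mathcal{F}$ of complementary dimension. Since $\mathcal{F}_1$ and $\mathcal{F}_2$ are transverse and regular, intersecting their leaves with $\Sigma_p$ produces two transverse regular sub-foliations of $\Sigma_p$ of dimensions $\dim\mathcal{F}_1 - \dim\mathcal{F}$ and $\dim\mathcal{F}_2 - \dim\mathcal{F}$. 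Uniform transversality together with $C^{1}$-regularity of the sub-leaves yields a local product structure on $\Sigma_p$: any two sufficiently close points $y, y' \in \Sigma_p$ admit a unique \emph{corner} $y'' \in \Sigma_p$ lying on the $\mathcal{F}_1$-sub-leaf through $y$ and the $\mathcal{F}_2$-sub-leaf through $y'$, with the two sub-leaf arc distances both bounded by a constant multiple of $d(y, y')$ (implicit function theorem inside the $C^{1}$ disk $\Sigma_p$).

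The key identification is that $h$ preserves each of the two sub-foliations, and on a single sub-leaf it is a holonomy of the \emph{other} foliation. Indeed, fix an $\mathcal{F}_1$-sub-leaf $T_1 := \mathcal{F}_1(y) \cap \Sigma_p$; its tangent space is complementary to $T\mathcal{F}_2$, so $T_1$ is itself a transversal to $\mathcal{F}_2$ at every one of its points. For $y_1 \in T_1$ the image $h(y_1)$ lies in $\mathcal{F}_1(y_1) \cap \Sigma_q$ and in $\mathcal{F}_2(y_1)$, so $h|_{T_1}$ is precisely the $\mathcal{F}_2$-holonomy from $T_1$ to the matching $\mathcal{F}_1$-sub-leaf of $\Sigma_q$ along a path in their common $\mathcal{F}_2$-leaf. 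Symmetrically, $h$ restricted to an $\mathcal{F}_2$-sub-leaf $T_2 \subset \Sigma_p$ is an $\mathcal{F}_1$-holonomy. By hypothesis both are $\theta$-H\"older, and a compactness-continuity argument on a fixed small plaque produces uniform H\"older constants $H_1, H_2$ valid for all sub-leaves of $\Sigma_p$ in that neighborhood.

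Combining, for nearby $y, y' \in \Sigma_p$ take the corner $y''$ and compute
\[
d(h(y), h(y')) \leq d(h(y), h(y'')) + d(h(y''), h(y')) \leq H_2\, d(y, y'')^{\theta} + H_1\, d(y'', y')^{\theta} \leq K\, d(y, y')^{\theta},
\]
using the corner-length bound. This gives that $h$ is $\theta$-H\"older, and the same argument with $p, q$ exchanged gives that $h^{-1}$ is $\theta$-H\"older. The main technical obstacle is to control the corner construction uniformly: the Lipschitz comparison of the sub-leaf arc distances $d(y, y'')$ and $d(y'', y')$ with $d(y, y')$ relies on the sub-foliations of $\Sigma_p$ being \emph{regular} (with continuously varying tangent spaces inherited from the regularity of $\mathcal{F}_1, \mathcal{F}_2$) and on uniform transversality of those tangent distributions on a neighborhood of $p$. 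Without such quantitative input the corner map could fail to be Lipschitz and the exponent $\theta$ would degrade in the composition.
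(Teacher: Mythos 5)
Your proof is correct and follows essentially the same approach as the paper's: restrict the two given foliations to the transversal to get a local product structure, observe that the intersection holonomy preserves each sub-foliation and restricts on a sub-leaf to a holonomy of the complementary foliation, and then combine the two one-sided H\"older bounds through the corner point using the comparability of corner arc-lengths with ambient distance. The paper phrases the estimate via an explicit ``foliation triangle inequality'' (citing Proposition~19.1.1 of Katok--Hasselblatt for the constant $D$), but the argument is the same.
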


\begin{proof}[\bf Proof]
Let $\mathcal{F}$ and $\mathcal{G}$ be the transverse foliations.  The leaves of the intersection foliation $\mathcal{H}$ are the connected components of the intersections of leaves of $\mathcal{F}$ and $\mathcal{G}$.  Take  compact  smooth local transversals $\tau  $ and $ \tau ^{\prime}$ to $\mathcal{H}$ at $x$ and $x^{\prime}$ such that $x$ and $x^{\prime}$ belong to the same $\mathcal{H}$-leaf, $\mathcal{H}(x) = \mathcal{H}(x^{\prime})$.  The foliations $\mathcal{F}$ and $\mathcal{G}$ intersect $\tau $ in transverse foliations $\mathcal{F}_{\tau }$ and $\mathcal{G}_{\tau }$ of complementary dimensions.  Their leaves meet in points.  Likewise for $\tau ^{\prime}$.  

Let $h : \tau  \rightarrow  \tau ^{\prime}$ be an $\mathcal{H}$-holonomy map.  It arises from choosing a path $\gamma $ from $x$ to $x^{\prime}$ in   $\mathcal{H}(x)$, and then lifting   $\gamma $ to nearby $\mathcal{H}$-leaves.   Since $\mathcal{H}$-leaves are contained in $\mathcal{F}$- and $\mathcal{G}$-leaves, $h$   carries the leaves of $\mathcal{F}_{\tau }$ to leaves of $\mathcal{F}_{\tau ^{\prime}}$ and likewise for $\mathcal{G}$.  

Transversality of $\mathcal{F}_{\tau }$ and $\mathcal{G}_{\tau }$ implies there is a ``foliation triangle inequality'' for distance in $\tau $, namely if $y = \mathcal{F}_{\tau }(p) \cap \mathcal{G}_{\tau }(q)$ then 
$$
\frac{1}{D} \max \{ d_{\mathcal{F}_{\tau }}(p,y) , d_{\mathcal{G}_{\tau }} (y,q)\} \leq  
d_{\tau }(p,q) \leq D(d_{\mathcal{F}_{\tau }}(p,y) + d_{\mathcal{G}_{\tau }}(y,q))
$$ 
where $D$ is a constant determined by the foliations and $\tau $.  A similar statement can be found in Proposition 19.1.1 of Katok and Hasselblatt's book \cite{KH95}.   The  distances are measured along $\tau $ or the intersection leaves.   This does not use the fact that the intersection foliations are H\"older, but merely the fact that the angles between their leaves are bounded away from $0$.   
A similar triangle inequality holds at $\tau ^{\prime}$.  See Figure~\ref{f.FG}.
\begin{figure}[htbp]
\centering
\includegraphics[scale=.60]{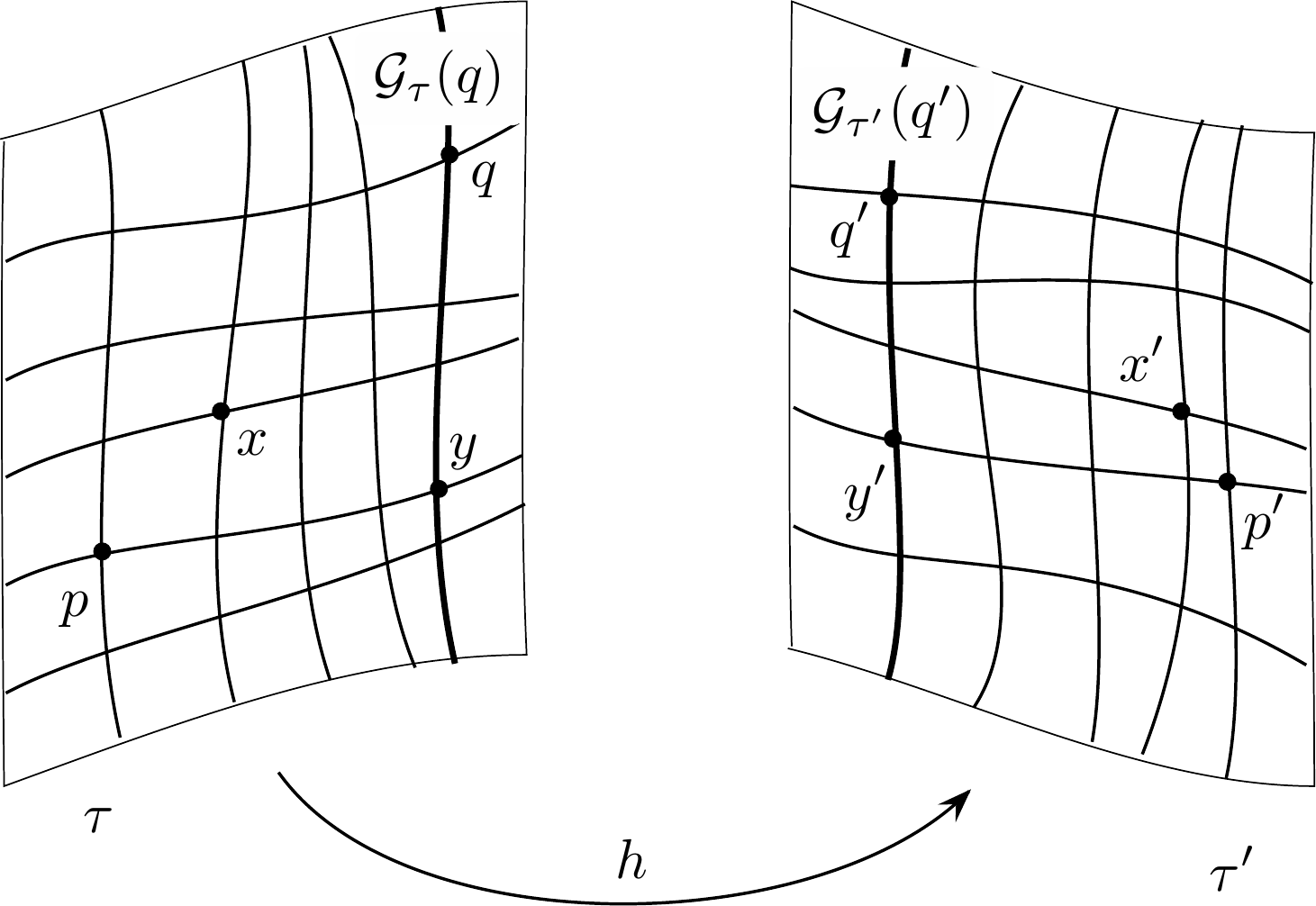}
\caption{Transversality of the intersection foliations on $\tau $ and $\tau ^{\prime}$ implies   modified triangle inequalities.  The restriction of the $\mathcal{H}$-holonomy map $h$  to $\mathcal{G}_{\tau }(q)$ is $\mathcal{F}$-holonomy.}
\label{f.FG}
\end{figure}

$\tau $ is not a transversal to $\mathcal{F}$.  Its dimension is wrong.  Rather,   $\mathcal{G}_{\tau }(q)$ is a transversal to $\mathcal{F}$ at $q \in \tau $ and $\mathcal{G}_{\tau ^{\prime}}(q^{\prime})$ is a transversal to $\mathcal{F}$ at $q^{\prime} =h(q) \in \tau ^{\prime}$.   The map $h$ restricted to $\mathcal{G}_{\tau }(q)$ is merely the $\mathcal{F}$-holonomy with respect to   $\gamma $.  Likewise for $\mathcal{G}$. These holonomy maps are H\"older.   Thus, if $h(p) = p^{\prime}$, $ h(q) = q^{\prime}$, $h(y) = y^{\prime}$ as above, and $d_{\mathcal{F}_{\tau } }(p,y)  \leq  d_{\mathcal{G}_{\tau } }(y,q)$ then 
\begin{equation*}
\begin{split}
d_{\tau ^{\prime}}(h(p),h(q)) & \leq  D^{\prime} (d_{\mathcal{F}_{\tau ^{\prime}}}(p^{\prime},y^{\prime}) + d_{\mathcal{G}_{\tau ^{\prime}}}(y^{\prime}, q^{\prime}))
\\
& \leq  D^{\prime}(Cd_{\mathcal{F}_{\tau }}(p,y)^{\theta } + Cd_{\mathcal{G}_{\tau } }(y,q)^{\theta })
\\
& \leq 2CD^{\prime}d_{\mathcal{G}_{\tau } }(y,q)^{\theta } \leq 2CD^{\prime}D^{\theta }d_{\tau }(p,q)^{\theta } 
\end{split}
\end{equation*}
shows that $h$ is $\theta $-H\"older.
\end{proof}

\begin{Rmk}
In the proof of the  Lemma~\ref{l.Hproduct} we did not need to know that all the holonomy maps of $\mathcal{F}$ and $\mathcal{G}$ are H\"older, only the ones that arise from $\mathcal{H}$-holonomy maps.  It is possible that $\mathcal{F}$ and $\mathcal{G}$ have other holonomy maps which fail to be H\"older.  This still permits the intersection foliation to be H\"older.   See Remark~\ref{k.goodbad} in Section~\ref{s.cautionary}.
\end{Rmk}

A slight sharpening of the   Lemma~\ref{l.Hproduct} replaces the assumption about $\mathcal{F}$ and $\mathcal{G}$ being H\"older by what was actually used in the proof, namely H\"olderness of the slice maps.   This also removes consideration of   irrelevant holonomy maps.  Likewise, the proof works just as well for laminations as for foliations.

\begin{Thm}
\label{t.intersection}
Suppose that $\mathcal{F}, \mathcal{G}$ are transverse laminations which intersect in a lamination $\mathcal{H}$. If     discs $\tau , \tau ^{\prime}$ are transverse to $\mathcal{H}$ and   $h :  {\tau } \rightarrow   {\tau ^{\prime}}$ is a holonomy map along $\mathcal{H}$   whose slice maps are uniformly $\theta $-H\"older then $h$ is $\theta $-H\"older.  
\end{Thm}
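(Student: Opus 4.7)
The plan is to mimic the proof of Lemma~\ref{l.Hproduct} verbatim, paying attention to the fact that only the slice maps --- not the full holonomies of $\mathcal{F}$ and $\mathcal{G}$ --- are needed in that argument. The one wrinkle is that laminations, not foliations, are in play, so transverse discs meet leaves in relatively open sets rather than full neighborhoods, but this does not affect the local geometry that the proof relies upon.

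First I would set up notation exactly as before. The discs $\tau$ and $\tau'$ are cut by $\mathcal{F}$ and $\mathcal{G}$ into transverse families of slices $\mathcal{F}_\tau, \mathcal{G}_\tau$ and $\mathcal{F}_{\tau'}, \mathcal{G}_{\tau'}$ of complementary dimensions inside the respective transverse discs, whose leaves intersect in the $\mathcal{H}$-slices. The holonomy map $h$ along $\mathcal{H}$ permutes these slice foliations: it carries $\mathcal{F}_\tau$-leaves to $\mathcal{F}_{\tau'}$-leaves and $\mathcal{G}_\tau$-leaves to $\mathcal{G}_{\tau'}$-leaves, because each $\mathcal{H}$-leaf lies simultaneously in an $\mathcal{F}$-leaf and a $\mathcal{G}$-leaf. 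By hypothesis, the restrictions of $h$ to $\mathcal{G}_\tau$-leaves (viewed as $\mathcal{F}$-holonomy) and to $\mathcal{F}_\tau$-leaves (viewed as $\mathcal{G}$-holonomy) are uniformly $\theta$-H\"older with some common constant $C$.

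Next I would reproduce the transversality-based ``foliation triangle inequality.'' Transversality of $\mathcal{F}_\tau$ and $\mathcal{G}_\tau$ inside the smooth disc $\tau$ yields a constant $D$ so that for $p, q \in \tau$ near each other and $y = \mathcal{F}_\tau(p) \cap \mathcal{G}_\tau(q)$,
\[
\tfrac{1}{D}\max\{d_{\mathcal{F}_\tau}(p,y),\,d_{\mathcal{G}_\tau}(y,q)\} \leq d_\tau(p,q) \leq D\bigl(d_{\mathcal{F}_\tau}(p,y)+d_{\mathcal{G}_\tau}(y,q)\bigr),
\]
and similarly on $\tau'$ with constant $D'$. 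The only input here is the bounded angle between the two slice families, which follows from transversality of $\mathcal{F}$ and $\mathcal{G}$ and smoothness of the transversals.

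Finally I would chain the estimates. Given $p,q \in \tau$, set $y = \mathcal{F}_\tau(p) \cap \mathcal{G}_\tau(q)$ and $p' = h(p)$, $q' = h(q)$, $y' = h(y)$; then $y' = \mathcal{F}_{\tau'}(p') \cap \mathcal{G}_{\tau'}(q')$ since $h$ preserves the slice foliations. Applying the upper triangle inequality in $\tau'$, then the slice-map H\"older bounds on the two segments (each segment lies in a single $\mathcal{F}_\tau$- or $\mathcal{G}_\tau$-leaf, where $h$ is a slice map), and finally the lower triangle inequality in $\tau$, one gets
\[
d_{\tau'}(p',q') \leq 2CD'D^\theta\, d_\tau(p,q)^\theta,
\]
exactly as in the proof of Lemma~\ref{l.Hproduct}. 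This concludes the argument.

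I do not anticipate a real obstacle: the content of Theorem~\ref{t.intersection} is a bookkeeping refinement of the lemma, recording that the proof only invokes H\"older regularity of those $\mathcal{F}$- and $\mathcal{G}$-holonomies obtained by restricting $h$. The one point requiring a sentence of care is verifying that $h$ does permute the two transverse slice families --- which is forced by the fact that each $\mathcal{H}$-leaf is contained in a single $\mathcal{F}$-leaf and a single $\mathcal{G}$-leaf --- so that the restrictions of $h$ genuinely are slice maps of $\mathcal{F}$ and $\mathcal{G}$ in the sense of the hypothesis.
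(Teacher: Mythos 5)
Your proposal is correct and matches the paper's own proof almost verbatim: the paper likewise observes that the argument for Lemma~\ref{l.Hproduct} only ever invokes the H\"older bounds on the restrictions of $h$ to the slices $\mathcal{F}_\tau(p)$ and $\mathcal{G}_\tau(q)$, and then repeats the two triangle-inequality estimates to obtain $d_{\tau'}(h(p),h(q)) \leq 2CD'D^\theta d_\tau(p,q)^\theta$. Your extra sentence checking that $h$ really does permute the slice families (because each $\mathcal{H}$-leaf lies in a single $\mathcal{F}$-leaf and a single $\mathcal{G}$-leaf) is a sound clarification that the paper states more tersely.
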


\begin{proof}[\bf Proof]  The slice maps of $h$ are its restrictions   to the slices $  \mathcal{F}_{\tau }(p)$ and $  \mathcal{G}_{\tau }(q)$.  It follows that $h$  sends $ \mathcal{F}_{\tau }(p) $ to $   \mathcal{F}_{\tau ^{\prime}}(h(p) )
$
  and similarly for $\mathcal{G}$.  
The inequality $d_{\tau ^{\prime}}(h(p), h(q)) \leq  2CD^{\prime}D^{\theta }d_{\tau }(p,q)^{\theta }$ has exactly the same proof.
\end{proof}

\section{Holonomy Versus Leaf Conjugacy}
\label{s.versus}

There is a natural relationship between   leaf conjugacies arising in \cite{HPS77} and   holonomies in certain skew products.  It lets us deduce H\"olderness of a leaf conjugacy in dimension $n$ from H\"olderness of  holonomy maps of the suspended foliation in dimension $n+1$.   This suspension strategy cuts our work in half. 

We consider compact manifolds $T, M$ and a skew product diffeomorphism $G : T \times  M \rightarrow  T \times  M$ covering the identity
$$
G(t,x) = (t, g_{t}(x)) \ .
$$
$g_{t} : M \rightarrow  M$ is the slice of $G$ over $t$.  In our application $T$ is the circle or the segment.  We   assume that for some $0 \in T$, 
\begin{itemize}

\item[(a)]
 $f = g_{0}$ is normally hyperbolic and plaque expansive at  a foliation $\mathcal{F} $ of $M$.
\item[(b)]
$G$  $C^{1}$-approximates the product diffeomorphism 
\\
$F (t,x) = (t, f (x))$.

\end{itemize}
  
Then $F $ is normally hyperbolic and plaque expansive at the product foliation $\mathcal{S}  = T \times  \mathcal{F}  $ whose leaves are products   $T\times L$ where $L$ is a leaf of  $\mathcal{F}$.  Also, each $g_{t}$ $C^{1}$-approximates $f $.  Fix a smooth  bundle $E \subset  TM$ complementary to $T\mathcal{F} $ and set $N(x,r) = \exp E(x,r)$.  Then $\mathcal{N} = \{N(x,r) : x \in M\}$ is a convenient $C^{1}$  family of smooth discs   transverse to the leaves  of $  \mathcal{F} $. Likewise $t \times  N(x,r)$ is a small smooth disc through $(t,x)$ in $T \times  M$  transverse to the leaves of the product foliation $\mathcal{S} $.    

We observe two things about these discs.  First, by Topogonov's Triangle Theorem \cite{CE}, there   exists a uniform  $r > 0$ such that   $N(\rho ,r) = \bigcup_{x \in \rho } N(x,r)  $ is a    tubular neighborhood of the  plaque  $\rho $.   Its natural parameterization is the $C^{1}$ diffeomorphism  $e_{\rho } : E(\rho ,r) \rightarrow N(\rho ,r)$ that sends  $(x,v)$ to $\exp_{x}(v)$.  Second,  $e_{\rho }$ has uniformly bounded distortion.  By this we mean that for some constant  $D$ and all plaques $\rho $, the map $e_{\rho }$ neither expands distance by more than a factor $D$, nor contracts it by less than a factor $1/D$.  This follows by further applications of the Toponogov Triangle Theorem.  In particular, local holonomy maps from one plaque to another along the fibers of $N(\rho ,r)$ have uniformly bounded distortion.  

  Then \cite{HPS77} implies two things, one about $G$ and the other about $g_{t}$.   
\begin{itemize}

\item[(c)]
There is a unique $G$-invariant foliation $\mathcal{S}_{G}$ near   $\mathcal{S} $, and there is an equivariant    leaf conjugacy $\mathfrak{h} : T \times  M \rightarrow  T \times  M$  which approximates the identity map and sends $\mathcal{S} $-leaves to $\mathcal{S}_{G}$-leaves.  In fact it    sends $(t,x) \in T \times  M$ to the unique point $(t,y) \in t \times N(x,r)$ whose $G$-orbit can be closely shadowed by an $F $ pseudo-orbit  that respects $\mathcal{S} $.  Modulo the choice of $\mathcal{N}$, $\mathfrak{h}$ is unique.
 
\item[(d)]
There is a unique $g_{t}$-invariant foliation $\mathcal{F}_{t}$ near   $\mathcal{F} $, and there is    an equivariant  leaf conjugacy $\mathfrak{h}_{t} : M \rightarrow M$ which approximates the identity map and sends $\mathcal{F} $-leaves to $\mathcal{F}_{t}$-leaves. In fact it sends $x \in M$ to the unique point $y \in N(x,r)$ whose $g_{t}$-orbit can be closely shadowed by an $f $ pseudo-orbit that respects   $\mathcal{F} $.  Modulo the choice of $\mathcal{N}$, $\mathfrak{h}_{t}$ is unique.
\end{itemize}

\begin{Defn}
The foliation $\mathcal{S}_{G}$ is the \textbf{suspension foliation} for $G$.   
\end{Defn}  

Consider transversals $0 \times  N(p,r)$ and $1 \times N(p,r)$ to an $\mathcal{S} $-leaf $T \times L$.   For $x  \in N(p,r)$   the straight line path $\sigma  : t \mapsto  (t,x )$ lies in the $\mathcal{S} $-leaf   containing $(0, x) $.  It   lifts to a nearby path $t \mapsto  (t,   \alpha (t,x ))$ in the $\mathcal{S}_{G}$-leaf through $(0,x )$ such that $\alpha (t, x ) \in   N(p,r)$ for $0 \leq  t \leq  1$.  The map $h_{p} : x  \mapsto \alpha (1,x )$ is a \textbf{suspension holonomy map}  for $G$.       It sends $N(p, r^{\prime})$ into $N(p, r)$ and  is independent of the choice of  $\alpha $ near $\sigma $.  The radius $r^{\prime}$ is less than $r$ so that $h_{p}(x)$ belongs to $N(p,r)$.  (Actually, one should define the suspension holonomy   as $H_{p} : (0, x) \rightarrow (1,h_{p}(x))$, but we abuse the concept for notational simplicity.)   See Figure~\ref{f.suspension} in which  $\sigma $ actually ``drops'' to $\alpha  $.  
  \begin{figure}[htbp]
\centering
\includegraphics[scale=.60]{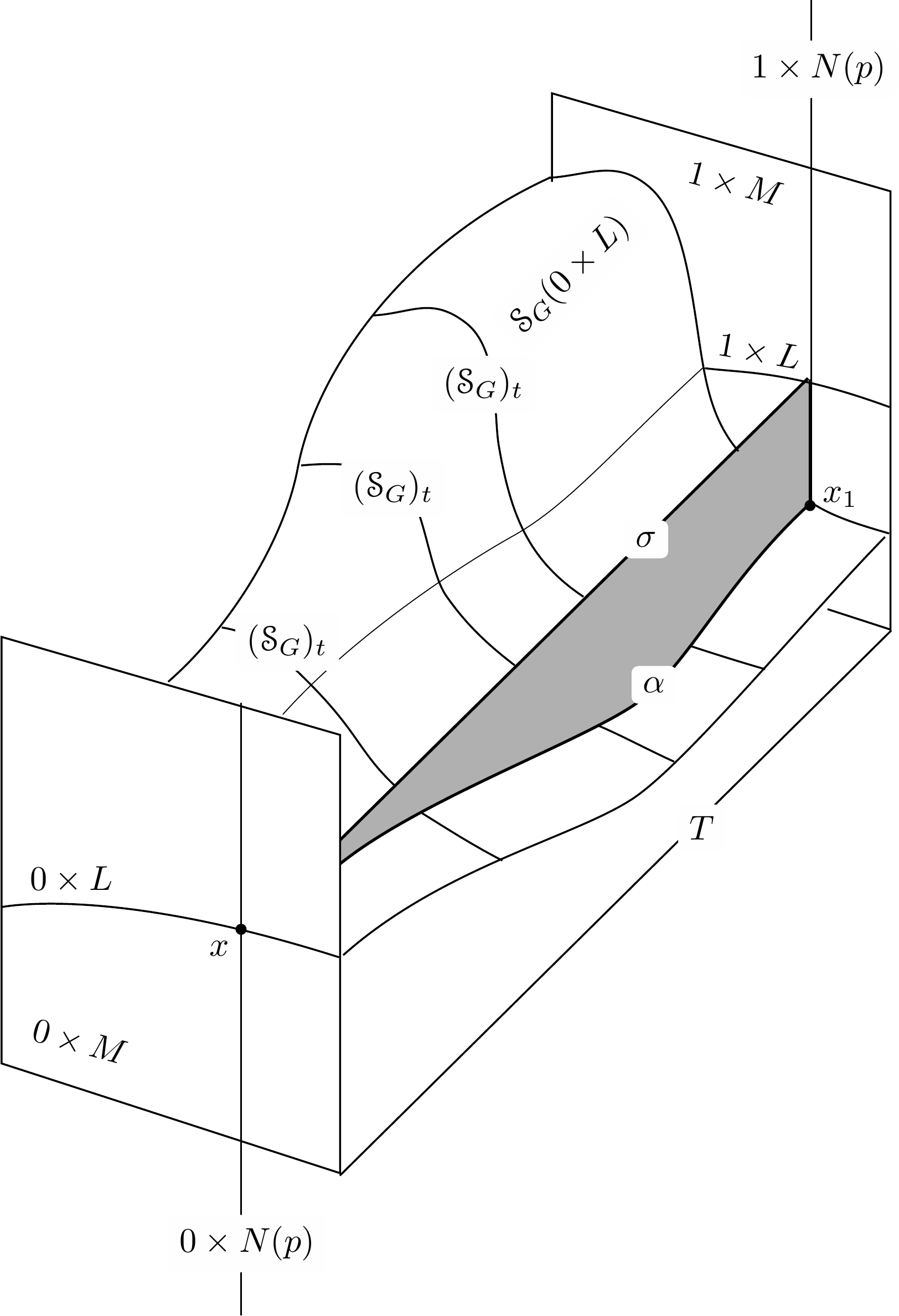}
\caption{The suspension foliation $\mathcal{S}_{G}$ and its suspension holonomy.  The $\mathcal{S}_{G}$-leaf through $0\times L$ is drawn.    The slice $[0,1] \times  N(p)$ between $\sigma $ and $\alpha  $ is shaded.  The holonomy sends $x$   to $x_{1}$.}
\label{f.suspension}
\end{figure}

 If $x \in N(p,r^{\prime}) \cap N(q,r^{\prime})$ then the straight line path $\sigma $ from $(0,x)$ to $(1,x)$ lifts to two nearby paths 
 $$
 (t, \alpha (t)) \quad \textrm{and} \quad (t, \beta (t))
 $$
in the same $\mathcal{S}_{G}$-leaf.   Thus $h_{p}(x)$ and $h_{q}(x)$ lie in the same $\mathcal{F}_{1}$-plaque.

\begin{Thm}\label{t.Amie1}   
The suspension  leaf conjugacies are related by 
$ \mathfrak{h}(t,p) = (t,\mathfrak{h}_{t}(p))$.  The $t$-slice of  $\mathcal{S} _{G}$ equals   $\mathcal{F}_{t}$, and $\mathfrak{h}_{t} $ is a canonical leaf conjugacy $\mathcal{F} \rightarrow  \mathcal{F}_{t}$.   The suspension  holonomy map satisfies
$$
h_{p}(x) = N(p,r) \cap \mathfrak{h}_{1}(\rho (x))
$$
where  $x \in N(p,r^{\prime})$ and $\rho (x)$ is its $\mathcal{F} $-plaque.  In particular,  $h_{p}(p) = \mathfrak{h}_{1}(p)$.  If the suspension holonomy maps $h_{p}$  are uniformly $\theta $-biH\"older then so is the leaf conjugacy $\mathfrak{h}_{1}$.
\end{Thm}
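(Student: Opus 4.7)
The plan is to obtain the three identities from the shadowing characterization of $\mathfrak{h}$ and the canonical $\mathfrak{h}_{t}$'s, and then to reduce the biH\"older statement for $\mathfrak{h}_{1}$ to a single application of $h_{x}$ plus a leafwise error bounded by plaque holonomy. First I would establish $\mathfrak{h}(t,p) = (t,\mathfrak{h}_{t}(p))$. Because $G$ fixes each slice $\{t\}\times M$ and restricts there to $g_{t}$, the $G$-orbit of $(t,y)$ is contained in $\{t\}\times M$; an $F$-pseudo-orbit that respects $\mathcal{S}$ and shadows this orbit may be chosen within the same slice, where it reduces to an $f$-pseudo-orbit respecting $\mathcal{F}$ that shadows the $g_{t}$-orbit. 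The tubular fiber of $\mathcal{S}$ at $(t,x)$ is $\{t\}\times N(x,r)$, so the uniqueness clauses in (c) and (d) force $\mathfrak{h}(t,x) = (t,\mathfrak{h}_{t}(x))$. Consequently the $t$-slice of an $\mathcal{S}_{G}$-leaf $\mathfrak{h}(T\times L)$ is $\{t\}\times \mathfrak{h}_{t}(L)$, and as $L$ ranges over $\mathcal{F}$-leaves, $\mathfrak{h}_{t}(L)$ ranges over the $\mathcal{F}_{t}$-leaves; that $\mathfrak{h}_{t}$ is the canonical leaf conjugacy is the content of (d).

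For the holonomy formula, the path $t\mapsto (t,\alpha(t,x))$ lies in the $\mathcal{S}_{G}$-leaf through $(0,x)$, which by the previous step equals $\{(s,\mathfrak{h}_{s}(q)) : s\in T,\ q\in\mathcal{F}(x)\}$. At $s=1$ this places $h_{p}(x)=\alpha(1,x)$ in $\mathfrak{h}_{1}(\mathcal{F}(x))\cap N(p,r)$; since $\alpha$ stays uniformly close to $\sigma$ and the tubes are transverse to $\mathcal{F}_{1}$, the intersection reduces to a single point on the plaque $\mathfrak{h}_{1}(\rho(x))$, giving the displayed formula. Setting $x=p$ recovers $h_{p}(p)=\mathfrak{h}_{1}(p)$, since $\mathfrak{h}_{1}(p)\in N(p,r)\cap \mathfrak{h}_{1}(\rho(p))$ is the canonical representative.

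The main step is the biH\"older transfer. Given nearby $x,y\in M$, I would introduce the auxiliary point $y' = N(x,r)\cap \rho(y)$, the $\mathcal{F}$-plaque projection of $y$ onto the transversal at $x$; regularity of $\mathcal{F}$ together with uniformly bounded distortion of the maps $e_{\rho}$ gives $d(y,y')\le D\,d(x,y)$ for a uniform constant $D$. Since $x$ and $y'$ both lie in $N(x,r)$, the hypothesis yields
$$
d(h_{x}(x),h_{x}(y')) \le C\,d(x,y')^{\theta} \le C(1+D)^{\theta}\,d(x,y)^{\theta}.
$$
By the formula of the previous step, $h_{x}(x)=\mathfrak{h}_{1}(x)$ and $h_{x}(y')=N(x,r)\cap \mathfrak{h}_{1}(\rho(y))$, while $\mathfrak{h}_{1}(y)=N(y,r)\cap \mathfrak{h}_{1}(\rho(y))$, so $h_{x}(y')$ and $\mathfrak{h}_{1}(y)$ lie on a common $\mathcal{F}_{1}$-plaque and differ by a plaque holonomy between the nearby transversals $N(x,r)$ and $N(y,r)$. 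Since each $\mathcal{F}_{1}$-plaque is $C^{1}$ and the tubes $N(\cdot,r)$ vary $C^{1}$ in their basepoint, this plaque holonomy is uniformly Lipschitz, giving $d(h_{x}(y'),\mathfrak{h}_{1}(y))\le C''\,d(x,y)$. The triangle inequality produces
$$
d(\mathfrak{h}_{1}(x),\mathfrak{h}_{1}(y)) \le C(1+D)^{\theta}d(x,y)^{\theta} + C''\,d(x,y),
$$
which is $O(d(x,y)^{\theta})$ for $d(x,y)$ small. The same argument applied to $h_{p}^{-1}$, which is $\theta$-H\"older by the biH\"older hypothesis, yields the H\"older estimate for $\mathfrak{h}_{1}^{-1}$.

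The step I expect to be most delicate is the uniform Lipschitz estimate for the plaque holonomy of $\mathcal{F}_{1}$ between $N(x,r)$ and $N(y,r)$: only individual $\mathcal{F}_{1}$-leaves are $C^{1}$, so one must verify that the Lipschitz constant is uniform in the plaque (not just within a single plaque), using only the regularity of $\mathcal{F}_{1}$ together with the $C^{1}$ smoothness of the tubular structure $\mathcal{N}$.
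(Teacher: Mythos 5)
Your proof is correct and follows essentially the same approach as the paper's: the identities come from the shadowing characterizations in (c) and (d), and the biH\"older transfer uses the same decomposition through an auxiliary plaque-projection point, the H\"older bound of a single suspension holonomy on a fixed transversal, and the uniformly bounded distortion of the $\mathcal{N}$-holonomy (which, as you flag, extends from $\mathcal{F}$-plaques to $\mathcal{F}_{1}$-plaques because $\mathcal{F}_{1}$ is a regular foliation, so the distortion estimate established via Toponogov before the theorem applies). The only differences are cosmetic: you use the plain triangle inequality for the upper bound and a symmetry argument for the lower bound, whereas the paper writes both directions out explicitly via an approximate right-triangle maximum inequality.
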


 \begin{proof}[\bf Proof]
 ``All this   follows naturally   from   the dynamical characterization    of  leaf conjugacy.''  Here are the details.
 According to (c), 
 $\mathfrak{h}(t,p)$ is the unique $(t,y) \in t \times  N(p,r)$ whose $G$-orbit $(t,g^{n}_{t}(y))$ is closely shadowed by an $F $ pseudo-orbit $(t_{n},p_{n})$.  Thus $(p_{n})$ is an $f $ pseudo-orbit that closely shadows the $g_{t}$-orbit of $y \in N(p,r)$ and respects $\mathcal{F} $.  By uniqueness in (d),  $y = \mathfrak{h}_{t}(p)$, i.e., $\mathfrak{h}(t,p) = (t,\mathfrak{h}_{t}(p))$. 
 
   The leaves of $\mathcal{S}_{G}$ approximate the product leaves and are transverse to the slice $t \times  M$, so the $t$-slice of  $\mathcal{S}_{G} $ is a $g_{t}$-invariant foliation   of $M$ that approximates $\mathcal{F} $.  By uniqueness it equals $\mathcal{F}_{t}$, and $\mathfrak{h}_{t}$ is the canonical leaf conjugacy $\mathcal{F} \rightarrow  \mathcal{F}_{t}$ with respect to the transversal family $\mathcal{N}$.  
   
   The leaf conjugacy $\mathfrak{h}_{1}$ sends $\mathcal{F} $-plaques to $\mathcal{F}_{1}$-plaques.  Since $h_{p}(x)$ and $h_{x}(x)$ lie in the same $\mathcal{F}_{1}$-plaque, and since $h_{x}(x) = \mathfrak{h}_{1}(x)$ we see that $h_{p}(x)$ is the intersection of $N(p,r)$ with the $\mathcal{F}_{1}$-plaque $\mathfrak{h}_{1}(\rho (x))$.  See Figure~\ref{f.coherence}. 
   \begin{figure}[htbp]
\centering
\includegraphics[scale=.60]{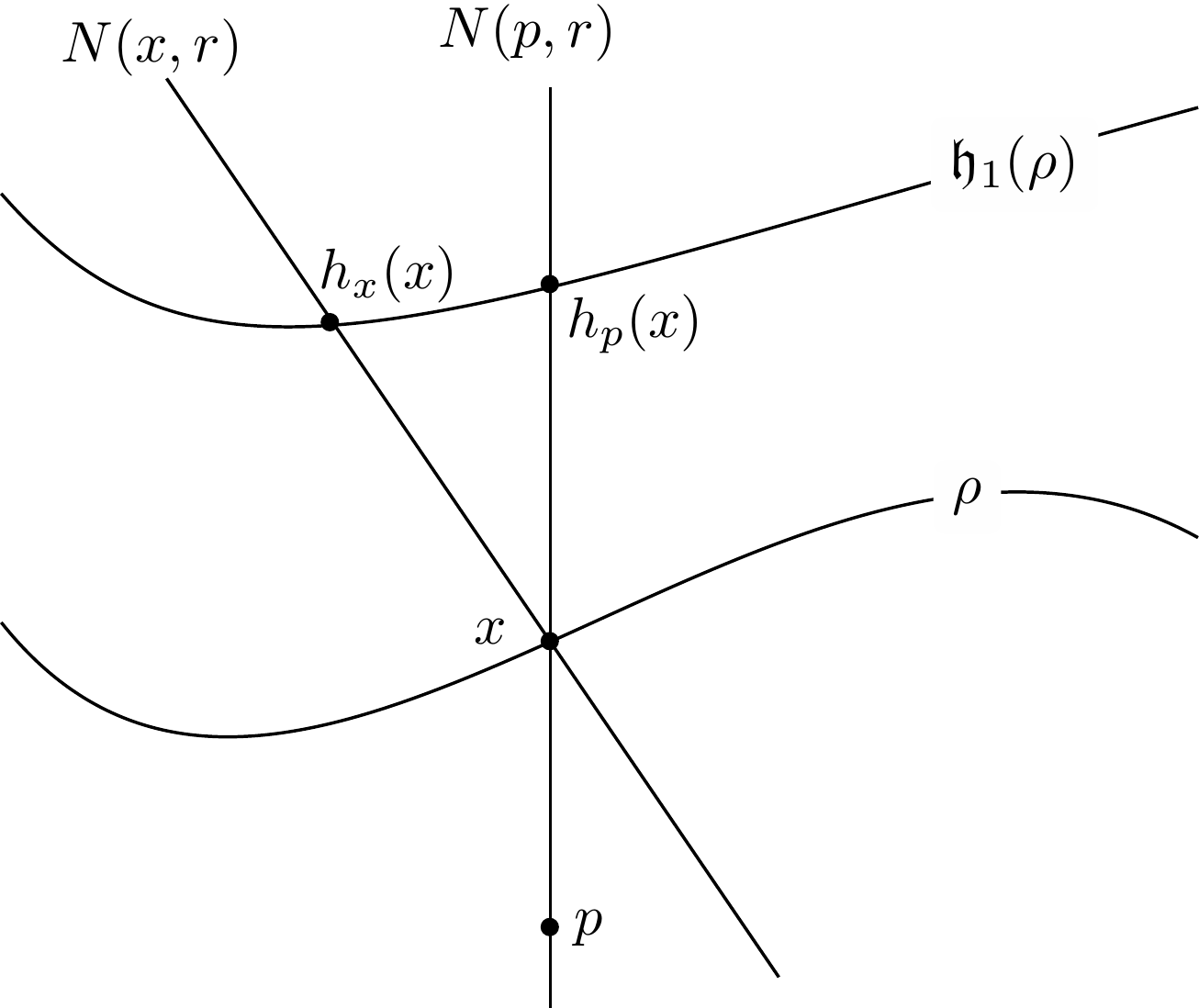}
\caption{Both holonomy maps $h_{p}$ and $h_{x}$ send $x$ into the same $\mathcal{F}_{1}$-plaque, namely the plaque $\mathfrak{h}_{1}(\rho )$ through $\mathfrak{h}_{1}(x) = h_{x}(x)$.}
\label{f.coherence}
\end{figure}

By hypothesis, the holonomy maps $h_{p}$ are uniformly $\theta $-biH\"older.  We    claim     there is a constant $H$ such that for all   nearby  $p, q \in M$ we have
$$
\frac{d(p,q)^{1/\theta }}{H} \leq d(\mathfrak{h}_{1}(p), \mathfrak{h}_{1}(q)) \leq  Hd(p,q)^{\theta } \ .
$$ 
Figure~\ref{f.coherence2} indicates two geodesic triangles.  They are effectively right triangles with hypotenuses shown as dotted lines.  For their angles at  $x$  and  $h_{p}(x)$ do not differ much from $\pi /2$. The ratio  of the larger leg to the hypotenuse is bounded between $1/K$ and $K$ where $K$ is a constant.  
\begin{figure}[htbp]
\centering
\includegraphics[scale=.60]{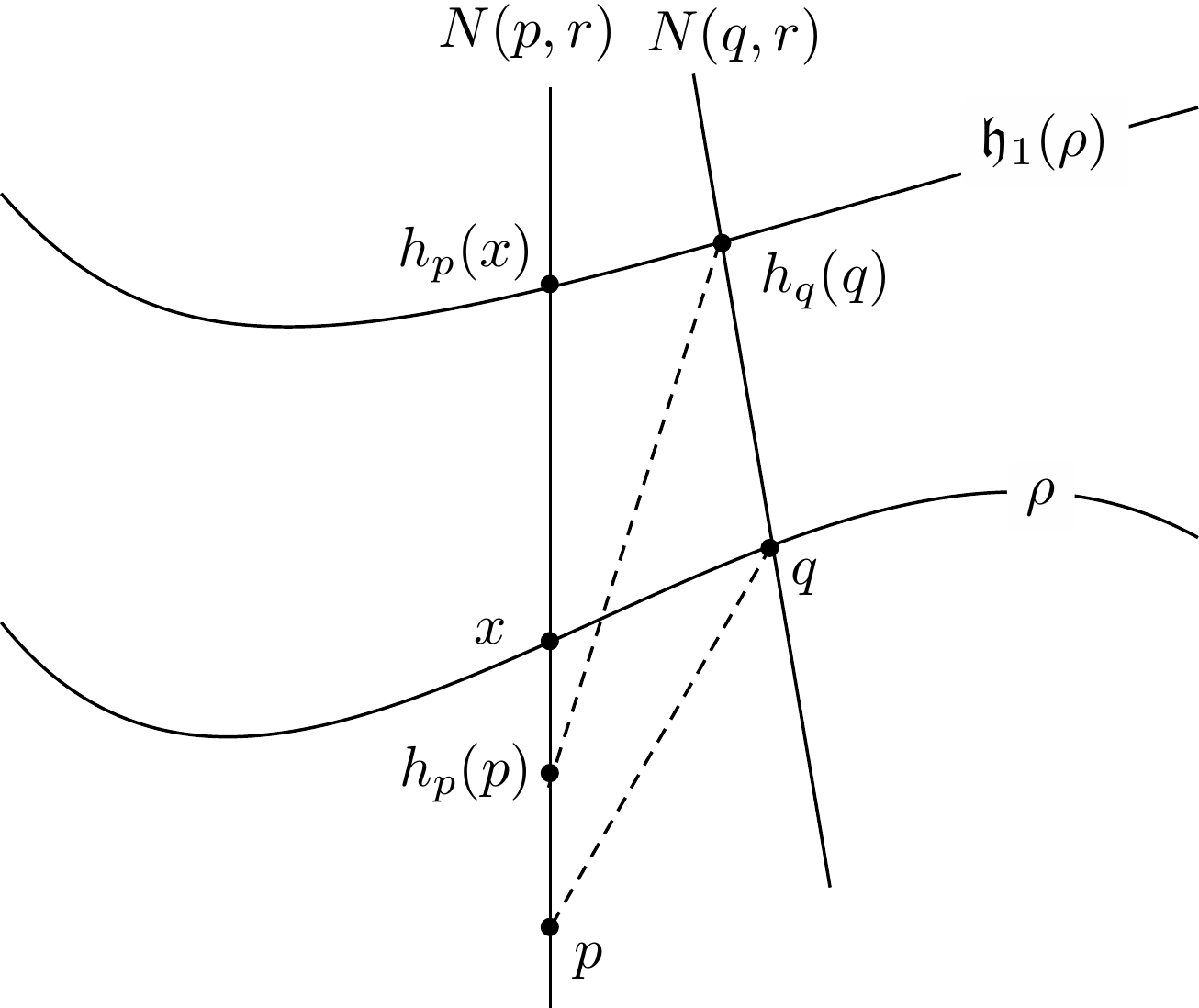}
\caption{$\rho $ is the plaque that contains $q$.  It meets $N(p, r)$ at $x$.  The triangles involved are essentially right triangles with the dotted lines as hypotenuses.}
\label{f.coherence2}
\end{figure}

Thus
\begin{equation*}
\begin{split}
d(\mathfrak{h}_{1}(p),\mathfrak{h}_{1}(q)) &= d(h_{p}(p), h_{q}(q)) 
\\
&\leq  K  \max\{  d(h_{p}(p), h_{p}(x)), d(h_{p}(x), h_{q}(q)) \}
\\
&\leq  K  \max\{  H_{0}d(p,x)^{\theta },   Dd(x,q) \}
\\
& \leq  K \max\{ H_{0},D\} \max\{ d(p,x), d(x,q)\}^{\theta }  
\\
&\leq K^{1+ \theta }\max\{ H_{0},D\}  d(p,q)^{\theta }  ,
\end{split}
\end{equation*}
where  $D$ bounds the distortion of the holonomy along $\mathcal{N}(\rho )$ from one plaque to another.  Similarly
\begin{equation*}
\begin{split}
Kd(h_{p}(p), h_{q}(q)) & \geq \max\{d(h_{p}(p), h_{p}(x)) , d(h_{p}(x), h_{q}(q))\}
\\
&\geq \max\{ \frac{d(p,x)^{1/\theta }}{H_{0}} ,  \frac{d(x,q)}{D} \}
\\
& \geq  \frac{\max \{ d(p,x) , d(x,q) \}^{1/\theta  } }{\max \{H_{0}, D\}} 
\\
& \geq \frac{1}{K^{1/\theta }\max \{H_{0}, D\}}d(p,q)^{1/\theta } \ ,
\end{split}
\end{equation*}
which completes the proof that $\mathfrak{h}_{1}$ is $\theta $-bih\"older with biH\"older constant   $H = K^{1+1/\theta }\max\{ H_{0}, D\}$.
 \end{proof}

\begin{Rmk}
We have used the perturbation theory of \cite{HPS77} for a diffeomorphism of a manifold with boundary, namely $T \times  M$ when $T = [0, 1]$.  To avoid waiving our hands and saying that the whole theory in \cite{HPS77} works also on manifolds with boundary, it is simpler to replace $[0,1]$ by the circle.  The   path of diffeomorphisms $g_{t}$ with $0 \leq  t \leq  1$ is replaced by a  $C^1$ loop of 
 diffeomorphisms, say $g_{t}$ with $0 \leq  t \leq  2$, all of which $C^{1}$-approximate $f $.  In this way $F $ and $G$ act on a compact manifold without boundary, namely  $S^{1} \times  M$, and we get to treat the leaf conjugacies and holonomy as we did when $T = [0,1]$.   
\end{Rmk}

\begin{Add}
\label{a.lamination}
The preceding theorem   holds also for laminations.
\end{Add}

\begin{proof}[\bf Proof]
The reasoning is exactly the same for laminations as for foliations.
\end{proof}

\section{A Uniform H\"older Section Theorem}
\label{s.C1PUHST}

Consider a fiber contraction
$$
\begin{CD}
 W @>\text{\normalsize
$\qquad 
F \qquad$}>> 
W
\\
@V\text{\normalsize$
\pi 
$}VV @VV\text{\normalsize$
\pi 
$}V
\\
X @>\text{\normalsize$\qquad 
h 
\qquad$}>>
X
\end{CD}
$$
It contracts the fibers  uniformly and has a unique invariant section $\sigma _{F}$.  Our goal  here  is a theorem asserting that $\sigma _{F}$ is $\theta $-H\"older when the fiber contraction dominates the base contraction at scale $\theta $.  Previous versions of such a result appear in \cite{HPS77}, Shub's book \cite{Shub}, and Wilkinson's paper \cite{W97} under hypotheses sometimes involving a compact base space, $C^{2}$ differentiability, and global bundle triviality, all of  which we need to relax.  Compactness becomes uniformity, $C^{2}$ becomes $C^{1}$, and global bundle triviality becomes local bundle triviality.

Existence, uniqueness, and continuity of the invariant section $\sigma _{F}$ of a fiber contraction are straightforward.  The standard assumptions are that 
\begin{itemize}

\item[(a)]
$F$ is continuous, $\pi $ is a continuous surjection, $h$ is a homeomorphism, and each fiber  $\pi ^{-1}(x)$ is equipped with a    metric  $d_{x}$ which makes the fiber complete,   depends continuously on $x \in X$, and is uniformly bounded.
\item[(b)]
There is a $k < 1$ such that 
$$
d_{h(x)}(F(w), F(w^{\prime})) \leq kd_{x}(w,w^{\prime})
$$
for all $w,w^{\prime} \in \pi ^{-1}(x)$ and all $x \in X$. 
\item[(c)]
There exists a continuous section $\sigma _{0} : X \rightarrow  W$. \end{itemize}
Then the space $\Sigma ^{c}$ of continuous sections is metrized by 
$$
d(\sigma ,\sigma ^{\prime}) = \sup_{x} d_{x}(\sigma (x), \sigma ^{\prime}(x))  
$$
and is complete.  It is contracted by the graph transform $F_{\#} : \Sigma ^{c} \rightarrow  \Sigma ^{c}$, 
$$
F_{\#} : \sigma \mapsto F \circ  \sigma  \circ  h^{-1} \ .
$$
The unique fixed point of $F_{\#}$ is the invariant section $\sigma _{F}$.  Under $F_{\#}$-iteration every section $\sigma \in \Sigma ^{c}$ converges uniformly to $\sigma _{F}$.  

To show that $\sigma _{F}$ is H\"older we want to justify the assertion that $F_{\#}$ leaves invariant a closed subspace of H\"older sections, and therefore $\sigma _{F}$ lies in that subspace.  

An initial H\"older assumption is that the fiber contraction $\theta $-dominates the base contraction: If $X$ is metrized this means that for all $x \in M$  
$$
k(x) < \mu (x)^{\theta }
$$
 where $k(x) \leq  k  < 1$ is the   Lipschitz constant of $F$ restricted to  the fiber $\pi ^{-1}(x)$ and $\mu (x)$ is the reciprocal of the Lipschitz constant of $h^{-1}$ at $h(x)$.  Without such a dominance    condition   H\"olderness can fail. 
 
  A second H\"older assumption concerns the vertical shear of $F$.   It  measures how much $F$ slides fibers up and down.  For example, take  
$$
F : (x,y) \mapsto (x/9,y/3 + \sin (50 \, x)) \ ,
$$
and see Figure~\ref{f.largeshear}.
\begin{figure}[htbp]
\centering
\includegraphics[width=300pt,height=100pt]{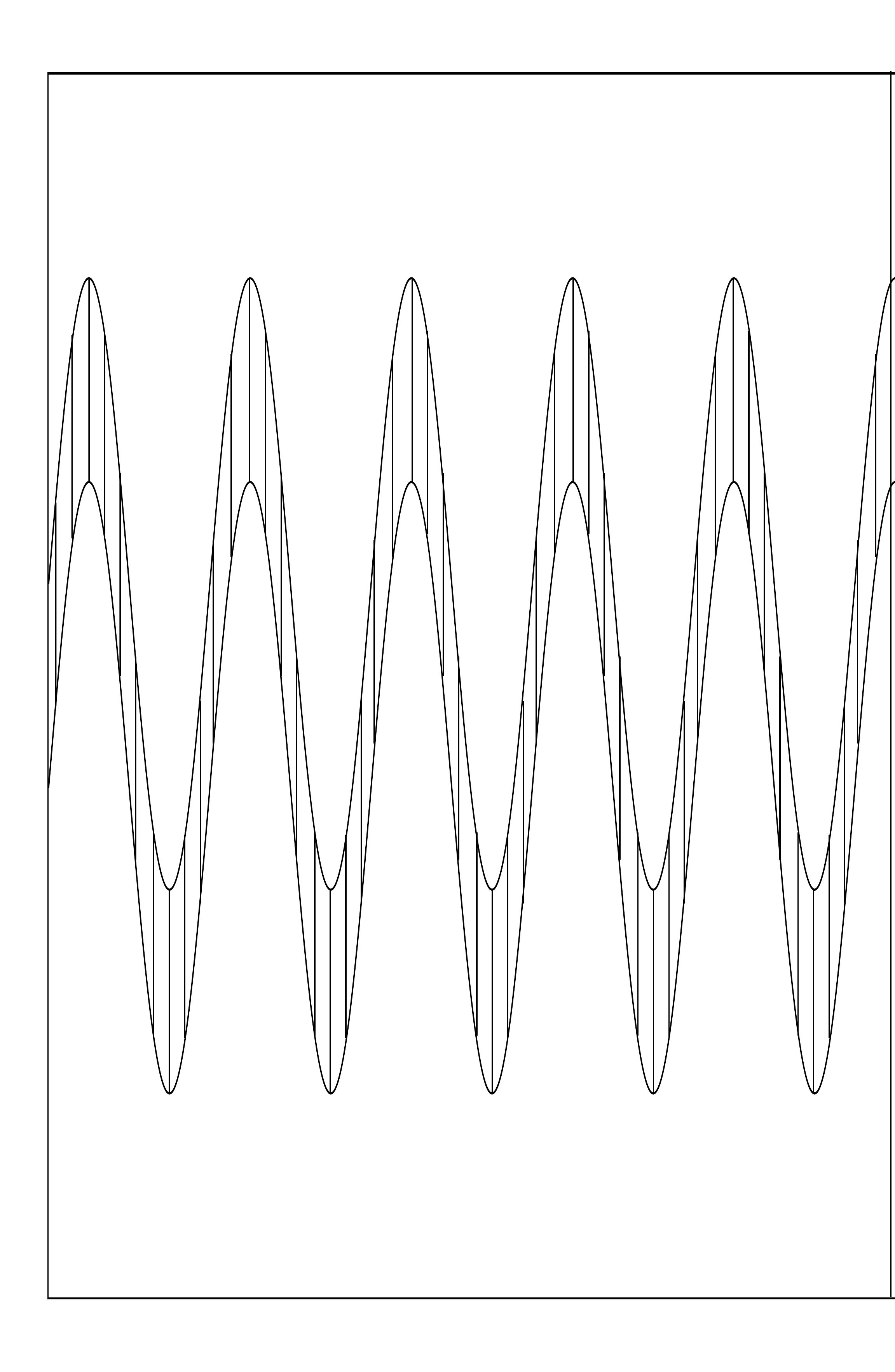}
\caption{The fiber contraction is $1/3$, the base contraction is $1/9$, and for all $\theta  <1/2$, the fiber contraction $\theta $-dominates the base contraction.  The vertical shear  is large but finite.}
\label{f.largeshear}
\end{figure}

  Bundle charts give the best way to quantify   vertical shear.   
 A bundle chart is a map $\varphi $ that sends $U\times Y$ homeomorphically  onto an open subset of $W$ such that each $\varphi (u \times  Y)$ is a fiber $  \pi ^{-1}(\xi (u))$.  This defines a base chart   $\xi : U \rightarrow  X$ so that  
$$
\begin{CD}
 U\times Y @>\text{\normalsize
$\qquad 
\varphi  \qquad$}>> 
W
\\
@V\text{\normalsize$
\pi 
$}VV @VV\text{\normalsize$
\pi 
$}V
\\
U @>\text{\normalsize$\qquad 
\xi 
\qquad$}>>
X
\end{CD}
$$
commutes.  We assume $U$, $Y$ are metric spaces, $\xi (U)$ is open in $X$, and $\xi : U \rightarrow  \xi (U)$ is a  homeomorphism.  

A bundle atlas for $W$ is a collection $\mathcal{A}$ of bundle charts $\varphi _{i}$ that cover $W$.  The corresponding collection $\mathcal{B}$ of base charts $\xi _{i}$ is an atlas for $X$.   
  We write $W_{i} = \varphi _{i}(U \times  Y)$, $X_{i} = \xi _{i}(U)$, and $d_{i}(x,x^{\prime}) = d_{U}(\xi _{i}^{-1}(x), \xi _{i}^{-1}(x^{\prime}))$.  The base atlas \textbf{covers $X$ uniformly} provided there is a $\delta  > 0$ such that if $x, x^{\prime} \in X_{j}$ and $d_{j}(x,x^{\prime}) < \delta $ then some $X_{i}$ contains the pair $h^{-1}(x), h^{-1}(x^{\prime})$.

  If $F(W_{i}) \cap W_{j} \not= \emptyset$ then 
$$
F_{ij}(u,y) = \varphi _{j}^{-1} \circ  F \circ \varphi _{i}(u,y) = (h_{ij}(u), v_{ij}(u,y)) \in U \times  Y \ ,
$$
is a chart expression for $F$.  The letters $h$ and $v$ indicate the horizontal and vertical components of $F$.    
The   \textbf{vertical shear} of $F_{ij}$  is $V_{ij}(u, u^{\prime}, y) = d_{Y}(v_{ij}(u,y), v_{ij}(u^{\prime},y)) $.  If there is a constant $L$ such that for all $ij$ and $u, u^{\prime} \in U$, $y \in Y$  we have
$$
V_{ij}(u,u^{\prime},y ) \leq  L d_{U}(u,u^{\prime})^{\theta }
$$
then $F$ has \textbf{$\pmb{\theta }$-bounded vertical shear} with respect to $\mathcal{A}$.  It is natural to define the chartwise fiber contraction and base contraction as $k_{ij}$ and $\mu _{ij}$ where $k_{ij}$ is the supremum of the  Lipschitz constants of $y \mapsto v_{ij}(u,y)$ with $u \in U$,  and $\mu _{ij}$ is the reciprocal of the Lipschitz constant of $  h_{ij}^{-1}$.
\begin{Defn}
The fiber contraction $F$ is \textbf{uniformly $\pmb{\theta }$-H\"older} with respect to $\mathcal{A}$ if the base atlas covers $X$ uniformly,   $F$  has uniformly $\theta $-bounded vertical shear, and the fiber contraction uniformly  $\theta $-dominates the base contraction in the sense that  
$$
\sup_{ij} \frac{k_{ij}}{\mu _{ij}^{\theta }} < 1 \ .
$$
\end{Defn}

The \textbf{principal part} of a section $\sigma : X \rightarrow  W$ in the bundle chart $\varphi _{i}$ is the map $s_{i} :  X_{i} \rightarrow Y$ such that 
$$
\varphi _{i}^{-1} \circ  \sigma (x) = (\xi _{i}^{-1}(x), s_{i}(x)) \ .
$$
Its   $\theta $-H\"older constant is 
 $$
H(s_{i}) =  \sup \frac{d_{Y}(s_{i}(x),s_{i}(x^{\prime}))}{d_{i}(x,x^{\prime})^{\theta }}
 $$ 
 where the supremum is taken over all $x, x^{\prime} \in X_{i}$ with $x \not=  x^{\prime}$.  If $\sup_{i} H(s_{i}) < \infty$ then the section is \textbf{uniformly $\pmb{\theta }$-H\"older} with respect to $\mathcal{A}$.

 \begin{Thm} 
  \label{t.UHST}
 With respect to $\mathcal{A}$, if
  $F$ is a uniformly $\theta $-H\"older  fiber contraction   and   there exist uniformly $\theta $-H\"older sections then  
$\sigma _{F}$ is uniformly  $\theta $-H\"older. 
 \end{Thm}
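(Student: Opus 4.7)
The plan is the standard "invariant set of H\"older sections" strategy. Since the graph transform $F_\#$ is a contraction on the complete metric space $\Sigma^c$ with unique fixed point $\sigma_F$, I will exhibit a non-empty, closed, $F_\#$-invariant subset $\Sigma_R^\theta \subset \Sigma^c$ consisting of sections whose chartwise principal parts all have $\theta$-H\"older constant $\le R$; this forces $\sigma_F \in \Sigma_R^\theta$. Non-emptiness of $\Sigma_R^\theta$ for sufficiently large $R$ is exactly the existence hypothesis, while closedness under the uniform topology on $\Sigma^c$ is a standard lower-semicontinuity fact: if $\sigma_n\to\sigma$ uniformly on fibers and each principal part $s_i^{(n)}$ has H\"older constant $\le R$, then $s_i(x)\to s_i(x)$ pointwise and the H\"older bound passes to the limit.

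The heart of the argument is a chartwise computation. Writing $F$ over the pair $(\varphi_i,\varphi_j)$ as $F_{ij}(u,y)=(h_{ij}(u),v_{ij}(u,y))$, the principal part $t_j$ of $F_\#\sigma = F\circ\sigma\circ h^{-1}$ in $\varphi_j$ satisfies
\[
t_j(x) = v_{ij}\bigl(h_{ij}^{-1}(\xi_j^{-1}(x)),\, s_i(h^{-1}(x))\bigr),
\]
whenever $h^{-1}(x)\in X_i$. For $x,x^*\in X_j$ sufficiently close, the uniform-cover condition on the base atlas guarantees that $h^{-1}(x)$ and $h^{-1}(x^*)$ lie in a common chart $X_i$. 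Splitting via the intermediate point $v_{ij}(u^*,s_i(x'))$, where $x'=h^{-1}(x)$ and $u^*=\xi_i^{-1}(h^{-1}(x^*))$, and applying the $\theta$-bounded vertical shear (constant $L$) on the first summand and the fiber Lipschitz constant $k_{ij}$ on the second, yields
\[
d_Y(t_j(x),t_j(x^*)) \le L\,d_U(u,u^*)^\theta + k_{ij}\,d_Y(s_i(x'),s_i(x^{*\prime})).
\]
Since $h_{ij}=\xi_j^{-1}\circ h\circ\xi_i$ and $\Lip(h_{ij}^{-1})=\mu_{ij}^{-1}$, both $d_U(u,u^*)$ and $d_i(x',x^{*\prime})$ are bounded by $\mu_{ij}^{-1}d_j(x,x^*)$. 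Combining with $H(s_i)\le R$ gives the central recursion
\[
H(t_j) \le L\mu_{ij}^{-\theta} + \frac{k_{ij}}{\mu_{ij}^\theta}\,H(s_i).
\]

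Set $\kappa := \sup_{ij}k_{ij}/\mu_{ij}^\theta < 1$ (uniform $\theta$-dominance) and $M := \sup_{ij}L\mu_{ij}^{-\theta}$, which is finite under the uniformity built into the atlas $\mathcal{A}$. Taking suprema in the recursion gives $\sup_j H(t_j) \le M + \kappa\sup_i H(s_i)$. Choosing $R\ge M/(1-\kappa)$ and at least the H\"older constant of a given uniformly $\theta$-H\"older section makes $\Sigma_R^\theta$ non-empty and $F_\#$-invariant. Uniqueness of the fixed point of $F_\#$ on $\Sigma^c$ then places $\sigma_F$ in $\Sigma_R^\theta$, proving uniform $\theta$-H\"olderness. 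The main technical obstacle is the bookkeeping that ensures $M$ is genuinely finite across all overlapping chart pairs; once the uniform structure of $\mathcal{A}$ is used to extract the global bounds $L$, $\kappa$, and $M$, the conclusion reduces to a clean Banach fixed-point argument in the space of uniformly H\"older sections.
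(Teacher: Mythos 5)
Your overall strategy and the heart of the chartwise computation match the paper's proof: same invariant set of sections with uniformly bounded chartwise H\"older constants, same formula $t_j(x)=v_{ij}\bigl(u,\,s_i(h^{-1}(x))\bigr)$ with $\xi_i(u)=h^{-1}(x)$, same split through the intermediate value $v_{ij}(u^*,s_i(h^{-1}(x)))$, and the same recursion $H(t_j)\le L\mu_{ij}^{-\theta}+(k_{ij}/\mu_{ij}^{\theta})H(s_i)$ with the dominance condition closing the loop.

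There is, however, a gap. The recursion is only derived for pairs $x,x^*\in X_j$ with $d_j(x,x^*)$ small enough that the uniform-cover hypothesis produces a common chart $X_i$ containing $h^{-1}(x)$ and $h^{-1}(x^*)$. But $H(t_j)$ is a supremum over \emph{all} pairs in $X_j$, including those with $d_j(x,x^*)\ge\delta$, and for such pairs your argument says nothing. Consequently $R\ge M/(1-\kappa)$ together with $R\ge H_0$ does not by itself guarantee $H(t_j)\le R$, so $F_\#$-invariance of $\Sigma_R^\theta$ is not established. The paper treats this explicitly in its Case~1: for $d_j(x,x^*)>\delta$ it uses the uniform bound $D$ on fiber diameters (built into hypothesis (a)) to get $d_Y(\widetilde s_j(x),\widetilde s_j(x^*))\le D\le (D/\delta^\theta)\,d_j(x,x^*)^\theta$, and this is exactly why the constant $H$ there also includes the term $D/\delta^\theta$. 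To repair your argument, enlarge the choice of $R$ to $R\ge\max\{H_0,\,D/\delta^\theta,\,M/(1-\kappa)\}$ and split the invariance verification into the two cases $d_j(x,x^*)>\delta$ (diameter bound) and $d_j(x,x^*)\le\delta$ (your recursion). With that addition the proof is complete and follows the paper's route.
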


 \begin{Rmk}
We refer to Theorem~\ref{t.UHST} as a pointwise (or relative) result in contrast to an absolute result  because  we are comparing fiber contraction to base contraction over small neighborhoods rather than comparing the weakest fiber contraction over all of $W$ to the sharpest base contraction over all of $X$.   
\end{Rmk}

\begin{Rmk}
  The space $X$ is built from homeomorphic copies of $U$ but it may be non-separable.  In our application    $\mathcal{A}$ is locally finite but     uncountable. 
\end{Rmk}

\begin{proof}[\bf Proof]  Let $k_{ij}$ and $\mu _{ij}$ denote the fiber contraction and base contraction of $F_{ij} = (h_{ij}(u), v_{ij}(u,y))$,
\begin{equation*}
\begin{split}
k_{ij} &= \sup_{y\not= y^{\prime}} \frac{d_{Y}(v_{ij}(u,y), v_{ij}(u,y^{\prime}))}{d_{Y}(y,y^{\prime})}
\\
\mu _{ij} &= \inf _{u\not= u^{\prime}}\frac{d_{U}(h_{ij}(u), h_{ij}(u^{\prime}))}{d_{U}(u,u^{\prime})}  \ .
\end{split}
\end{equation*}

By assumption there exists a  section $\sigma _{0}$ whose principal parts with respect to $\mathcal{A}$ are  uniformly $\theta $-H\"older.  Let $H_{0}$ be a bound for these H\"older constants.     Choose 
$$
H = \max\{ H_{0}, D/\delta ^{\theta } ,  \sup _{ij}1/(\mu_{ij} ^{\theta} -k_{ij})\}
$$
where $D$ is the diameter of $Y$ and $\delta $ is the base covering constant.  (If $x, x^{\prime} \in X_{j}$ and $d_{j}(x,x^{\prime}) \leq  \delta $ then  $h^{-1}(x), h^{-1}(x^{\prime})$ lie in a common $X_{i}$.  Since $\sup_{ij} k_{ij}/\mu _{ij} ^{\theta } < 1$, $H$ is finite.)   
Let   $\Sigma ^{\theta ,H}$ be the set of sections whose principal parts have $\theta $-H\"older constant $\leq  H$.   They satisfy 
$$
d_{Y}(s_{j}(x), s_{j}(x^{\prime}))\leq H d_{j}(x,x^{\prime})^{\theta }  
$$
for all $x, x^{\prime} \in X_{j}$ and all $j$.  The set  $\Sigma ^{\theta ,H}$ is  nonempty since it contains $\sigma _{0}$.  It is closed since   if a   sequence of sections   converges with respect to the metric $d$ on $\Sigma $ then in any chart its principal parts converge uniformly, and   H\"older conditions survive uniform convergence.  

It remains to show that $\Sigma ^{\theta ,H}$ is $F_{\#}$-invariant.  Given $\sigma \in \Sigma ^{\theta ,H}$ and a chart $\varphi _{j}$ we must show that the principal part of $\widetilde{\sigma } = F_{\#}(\sigma )$ satisfies
$$
d_{Y}(\widetilde{s}_{j}(x), \widetilde{s}_{j}(x^{\prime}))\leq H d_{j}(x,x^{\prime})^{\theta } \ .
$$

\underline{Case 1.}  $d_{j}(x,x^{\prime}) > \delta   $.    Then
$$
d_{Y}(\widetilde{s}_{j}(x), \widetilde{s}_{j}(x^{\prime}))\leq D = (D/\delta  ^{\theta } )\, \delta   ^{\theta } \leq H d_{j}(x,x^{\prime})^{\theta } 
$$
since $D/\delta  ^{\theta } \leq  H$.

\underline{Case 2.}  $d_{j}(x,x^{\prime}) \leq  \delta   $.  Then   there is an   $X_{i}$   containing the pair $h^{-1}(x), h^{-1}(x^{\prime})$.  The principal parts of $\sigma _{i}$ and $\widetilde{\sigma} _{j}$ are related by the formula   
$$
\widetilde{s}_{j} (x) = v_{ij}(u,s_{i}(h^{-1}(x)))
$$
where    $\xi _{i}(u) = h^{-1}(x)$.  This is proved by formula   manipulation.  We have   $F_{ij} = \varphi _{j}^{-1}\circ F\circ \varphi _{i}$, and thus $\varphi _{j}^{-1}\circ F =   F_{ij} \circ  \varphi _{i}^{-1}$, which gives
\begin{equation*}
\begin{split}
\varphi _{j}^{-1}\circ \widetilde{\sigma }(x) & = \varphi _{j}^{-1}\circ F\circ \sigma \circ h^{-1}(x)
\\
&= F_{ij} \circ \varphi _{i}^{-1}\circ \sigma \circ h^{-1}
\\
&= F_{ij}(u,s_{i}(h^{-1}(x))
\\
&= (h_{ij}(u), v_{ij}(u, s_{i}(h^{-1}(x))) \ .
\end{split}
\end{equation*}
 Since $\varphi _{j}^{-1}\circ \widetilde{\sigma }(x) = (\xi _{j}^{-1}(x), \widetilde{s}_{j}(x))$, we equate the vertical components to get the relation between the principal parts of $\sigma _{i}$ and  $\widetilde{\sigma }_{j}$ as stated. 
 
Since $\sigma \in \Sigma ^{\theta ,H}$ we have 
$$
d_{Y}(s_{i}(h^{-1}(x)), s_{i}(h^{-1}(x^{\prime}))\leq H d_{i}(h^{-1}(x), h^{-1}(x^{\prime}))^{\theta } \ .
$$
Thus 
  \begin{equation*}
\begin{split}
d_{Y}(\widetilde{s}_{j}(x),\widetilde{s}_{j}(x^{\prime})) &= d_{Y}(v_{ij}(u, s_{i}(h^{-1}(x)), v_{ij}(u^{\prime}, s_{i}(h^{-1}(x^{\prime}))))
\\
& \leq d_{Y}(v_{ij}(u, s_{i}(h^{-1}(x)), v_{ij}(u , s_{i}(h^{-1}(x^{\prime} ))))
\\
&+
d_{Y}(v_{ij}(u , s_{i}(h^{-1}(x^{\prime})), v_{ij}(u^{\prime}, s_{i}(h^{-1}(x^{\prime}))))
\\
& \leq    k_{ij}d_{Y}(s_{i}(h^{-1}(x), s_{i}(h^{-1}(x^{\prime})) + Ld_{U}(u,u^{\prime})^{\theta } 
\\
&\leq   k_{ij}Hd_{i}(h^{-1}(x), h^{-1}(x^{\prime})))^{\theta } + Ld_{i}(h^{-1}(x), h^{-1}(x^{\prime}))^{\theta } 
\\
&\leq (( k_{ij}H + L)/\mu_{ij} ^{\theta })d_{j}(x,x^{\prime})^{\theta } \leq Hd_{j}(x,x^{\prime})^{\theta } 
\end{split}
\end{equation*}
where $\xi _{i}(u) = h^{-1}(x)$ and $\xi _{i}(u^{\prime}) = h^{-1}(x^{\prime})$.  This completes the proof that $\Sigma ^{\theta ,H}$ is $F_{\#}$-invariant and therefore that $\sigma _{F}$ is $\theta $-H\"older.
\end{proof}

We will use the the Uniform H\"older Section Theorem as follows.  The base manifold will be the disjoint union of the global strong stable manifolds of a partially hyperbolic diffeomorphism $g$.  The fiber at $x \in W^{s}(p)$ is the local strong stable manifold at $x$, $W^{s}(x,r) $.  (Although this makes the fiber a subset of the base we can still think of a bundle this way.)  The bundle  map $F$ approximates the product $g \times  g$.  The base map   is not $g$ but is an amalgam $a$ of $g$ and a nearby diffeomorphism $f$.  Both fiber and base are contracted, but  for some $\theta $,  the fiber contraction dominates the $\theta ^{\textrm{th}}$ power of the base contraction.  

We cover the manifold $M$ with finitely many charts inside which the bundle is trivial.  This gives uncountably many charts $\varphi _{i}$ that cover the bundle because the base manifold $V$ has uncountably many components.  However the chart expressions for $F$ are uniform, and uniformity overcomes non-compactness.

\section{The Proof of Theorem~A}
\label{s.A}

We are given a normally hyperbolic diffeomorphism $f$  whose center unstable, center, and center stable foliations are $C^{1}$.  We intend to show that the corresponding foliations of a $C^{1}$ perturbation $g$ of $f$ are H\"older.  Most of the work is contained in the following result, which will be applied to the suspension of $\mathcal{W}^{cu}  $ and the suspension of $g$.   

Let $\mathcal{E}$ be a $C^{1}$ foliation at which $f$ is normally contracting, say  
$$
0 < \mu  <T^{s}f < \nu   < T_{T\mathcal{E}}f   \quad \textrm{and} \quad \nu < \mu ^{\theta } < 1 \ ,
$$
as in the normally hyperbolic case.   (The letter $\mathcal{E}$ is meant to suggest a     foliation along whose leaves $f$ is at least somewhat expanding.  An example is $\mathcal{E} = \mathcal{W}^{cu}$.)  
  Let $g$ $C^{1}$-approximate $f$ and let $\mathcal{W}^{s}_{g}$ be its strong stable foliation.  Being $C^{1}$,   $f$ is plaque expansive with respect to $\mathcal{E}$, and so   there is a unique nearby $g$-invariant foliation $\mathcal{E}_{g}$ and there is a canonical leaf conjugacy $\mathfrak{h} : \mathcal{E} \rightarrow  \mathcal{E}_{g}$ that respects $\mathcal{W}^{s}_{g}$.  
 
\begin{Propn}
\label{p.A}
$\mathfrak{h}$ is uniformly $\theta $-biH\"older when restricted to the $\mathcal{W}^{s}_{g}$-leaves  and the $\mathcal{E}_{g}$-holonomy is $\theta ^{2}$-H\"older.   
\end{Propn}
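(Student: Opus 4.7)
The plan is to run the Uniform H\"older Section Theorem (Theorem~\ref{t.UHST}) twice---once yielding that $\mathfrak{h}$ is $\theta$-H\"older along $\mathcal{W}^s_g$-leaves, and once yielding the same for $\mathfrak{h}^{-1}$---and then to deduce the $\theta^2$-H\"older holonomy by a conjugation argument. For the first run I take the base space $X$ to be $M$ with the $\mathcal{W}^s_g$-leaf topology (the disjoint union of the global strong stable leaves of $g$), the fiber at $x$ to be the local disc $\mathcal{W}^s_g(x,r)$, the base map to be the amalgam
\[
a(x) := \mathcal{E}(f(x)) \cap \mathcal{W}^s_g(g(x)),
\]
which is a single point locally by transversality of $\mathcal{E}$ and $\mathcal{W}^s_g$ and depends uniformly Lipschitz on $x$ because $\mathcal{E}$ is $C^1$, and the bundle map to be $F(x,y) = (a(x), g(y))$. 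This $F$ carries fibers to fibers because $a(x) \in \mathcal{W}^s_g(g(x))$ and $g$ contracts $\mathcal{W}^s_g(x,r)$ into $\mathcal{W}^s_g(g(x), \nu r)$. Using the plaque equivariance $g\mathfrak{h} = \mathfrak{h} f$ modulo $\mathcal{E}_g$-plaques together with the transversality of $\mathcal{E}_g$ and $\mathcal{W}^s_g$, one verifies directly that the unique invariant section of $F$ is $\mathfrak{h}$.

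To apply Theorem~\ref{t.UHST} I lift a finite atlas of $M$ trivializing $E^s$ to an uncountable but locally finite bundle atlas over $X$, indexed by the $\mathcal{W}^s_g$-leaves. Uniformly across charts, the fiber contraction satisfies $k_{ij} \leq \|T^s g\| \leq \nu + \epsilon$ and the base contraction satisfies $\mu_{ij} \geq m(T^s a) \geq \mu - \epsilon$, with $\epsilon$ as small as desired once $g$ is sufficiently $C^1$-close to $f$. The dominance hypothesis $\nu < \mu^\theta$ then gives $k_{ij}/\mu_{ij}^\theta < 1$ uniformly; $F$ has Lipschitz (hence $\theta$-bounded) vertical shear since $a$ and $g$ are Lipschitz; and the zero section $x \mapsto x$ is trivially a uniformly H\"older starting section. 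Theorem~\ref{t.UHST} then concludes that $\mathfrak{h}$ is uniformly $\theta$-H\"older along $\mathcal{W}^s_g$-leaves. A mirror run with base map $g$ and bundle map $F'(y,z) = (g(y), P(f(z)))$---where $P$ is the local projection onto $\mathcal{W}^s_g(g(y))$ along $\mathcal{E}$-plaques, well-defined by the same transversality---identifies $\mathfrak{h}^{-1}$ as the unique invariant section and, with identical rate estimates, yields its uniform $\theta$-H\"older regularity. Together these give the biH\"older claim.

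For the $\mathcal{E}_g$-holonomy, I take short smooth transversals $\tau, \tau'$ to $\mathcal{E}_g$ contained in $\mathcal{W}^s_g$-leaves and observe that $\mathfrak{h}^{-1}$ carries them to transversals $\mathfrak{h}^{-1}(\tau), \mathfrak{h}^{-1}(\tau')$ of $\mathcal{E}$, since $\mathfrak{h}$ both sends $\mathcal{E}$-leaves to $\mathcal{E}_g$-leaves and preserves $\mathcal{W}^s_g$-leaves, while $\mathcal{W}^s_g$ remains transverse to $\mathcal{E}$ (being $C^0$-close to $\mathcal{W}^s_f$). The $\mathcal{E}_g$-holonomy therefore factors as
\[
H^{\mathcal{E}_g} = \mathfrak{h} \circ H^{\mathcal{E}} \circ \mathfrak{h}^{-1},
\]
where $H^{\mathcal{E}}$ is the $\mathcal{E}$-holonomy between $\mathfrak{h}^{-1}(\tau)$ and $\mathfrak{h}^{-1}(\tau')$. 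Because $\mathcal{E}$ is $C^1$, $H^{\mathcal{E}}$ is Lipschitz; H\"older exponents multiply under composition, giving the stated $\theta \cdot 1 \cdot \theta = \theta^2$ regularity.

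The main technical obstacle will be the uniformity demanded by Theorem~\ref{t.UHST}: verifying that the chartwise constants $k_{ij}, \mu_{ij}$ faithfully track the pointwise dynamics $\|T^s g\|$ and $m(T^s a)$ across the uncountable atlas, that the base atlas covers $X$ uniformly in the sense required, and that the projection $P$ in the second run has Lipschitz constant uniformly close to $1$. All of these should follow from the $C^1$ hypothesis on $\mathcal{E}$ together with the standard $C^1$ persistence of $\mathcal{W}^s_g$ under perturbation, but careful bookkeeping is required, since the non-separability of the base space precludes any compactness shortcut.
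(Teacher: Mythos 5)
Your proposal is correct and essentially reproduces the paper's argument: the amalgam map $a(x)=\mathcal{W}^s_g(g(x),r)\cap\mathcal{E}(f(x),r)$, the two fiber contractions $F=(a,g)$ and $G=(g,a)$ over the disjoint union of $\mathcal{W}^s_g$-leaves with fibers $W^s_g(x,r)$, the Uniform H\"older Section Theorem identifying the invariant sections with $\mathfrak{h}$ and $\mathfrak{h}^{-1}$, and the factoring $h_g=\mathfrak{h}\circ h\circ \mathfrak{h}^{-1}$ through the $C^1$ holonomy $h$ of $\mathcal{E}$ to get $\theta^2$. The only differences are expository---you justify the identification of the invariant section via equivariance and transversality, whereas the paper observes that $a$-orbits are $f$ pseudo-orbits respecting $\mathcal{E}$ and invokes the shadowing characterization of the canonical leaf conjugacy; and the uniform bundle chart construction that you flag as the main remaining obstacle is exactly what the paper's appendix supplies, via a smoothed splitting and local trivializing vector fields.
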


\begin{proof}[\bf Proof]   
We define a map $a : M \rightarrow  M$ which is an ``amalgam'' of $f$, $\mathcal{E} $,    $g$, and $\mathcal{W}^{s}_{g}$ as follows.  For each $x \in M$ we set
  $$
  a(x) =W^{s}_{g}(g(x),r) \cap \mathcal{E} (f(x),r) \ .
$$
Here $r > 0$ is small and fixed.  $r$ is a radius where the local invariant manifolds of $f$ are fairly flat, meaning that distance measured along them approximates geodesic distance in $M$.  If the sup-distance between $f$ and $g$ is $< r/2$ then   $a(x)$ is uniquely defined by transversality.  To get the necessary estimates on $a$ we assume
$$
d_{C^{1}}(f,g) \ll r \ .
$$
 The way to think of $a$ in terms of $\mathcal{W}^{s}_{g}$ is this.  It takes the   stable $g$-manifold at $x$ and sends it to the stable $g$-manifold at $g(x)$, but it does not do so simply by applying $g$ and it does not send the base point $x$ to  the base point $g(x)$.  Rather, it first applies $f$  and then slides along the   $\mathcal{E} $-foliation to get to $W^{s}_{g}$.  Since everything is local, the sliding distance is small and we see that
 $$
 \textrm{\emph{$a$-orbits are $f$ pseudo-orbits.}}
 $$

  Since   $\mathcal{E} $ is $C^{1}$,  the amalgam map $a : M \rightarrow  M$ is   $C^{1}$ along the   $\mathcal{W}^{s}_{g}$-leaves.  The $C^{1}$ hypothesis on $\mathcal{E}$ is crucial here.  Restricted to the $\mathcal{W}^{s}_{g}$-leaves the amalgam map    $C^{1}$-approximates $g$ which $C^{1}$-approximates $f$.  Therefore, like $g$, the amalgam map    contracts  the   stable $g$-manifolds by a factor   $< \nu $.   See Figure~\ref{f.fga}.    
\begin{figure}[htbp]
\centering
\includegraphics[scale=.60]{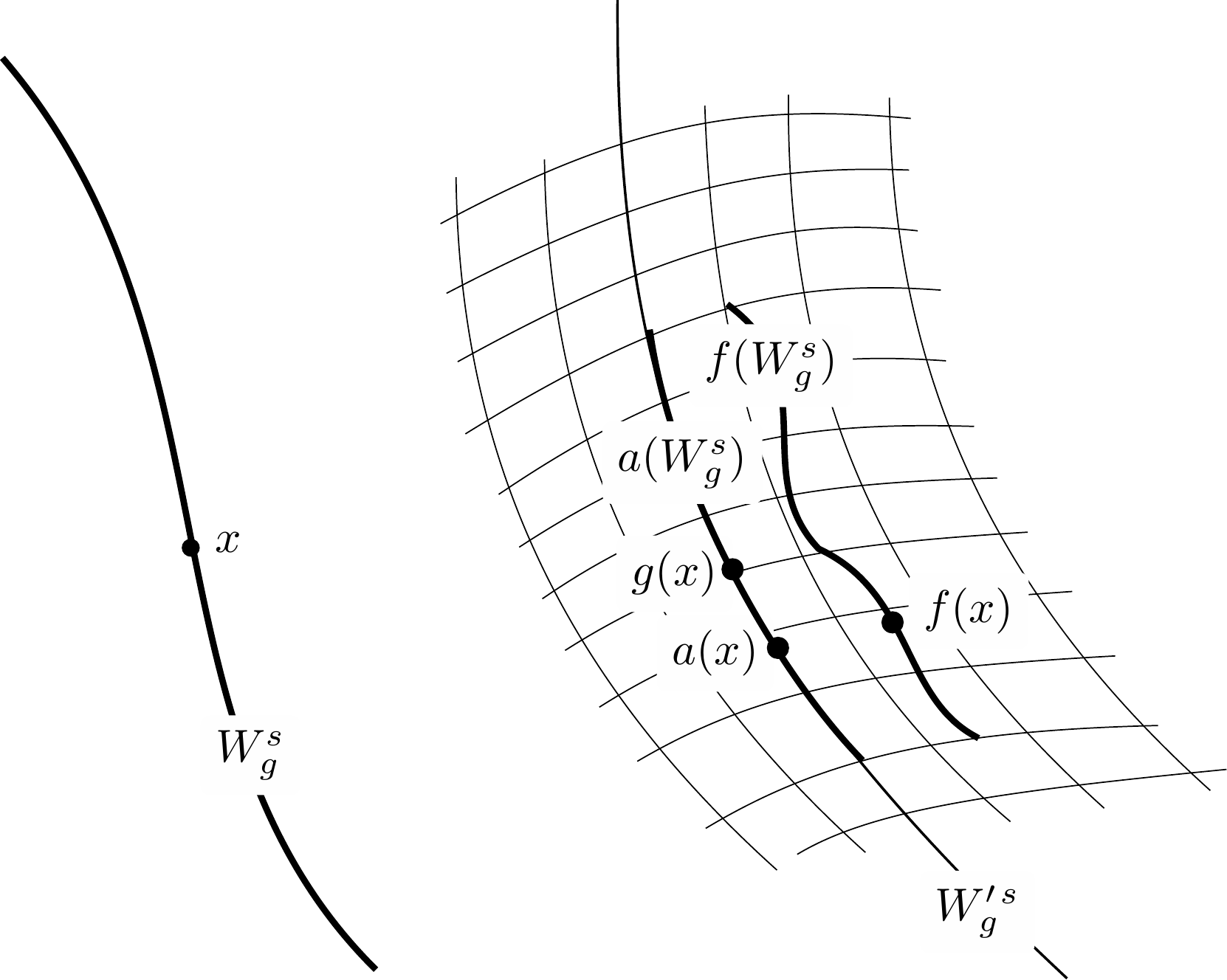}
\caption{The amalgam map $a$ and its effect on a local stable $g$-manifold $W^{s}_{g}$.  The light horizontal curves are the $\mathcal{E} $-leaves and the light vertical curves are the $\mathcal{W}^{s}_{g}$-leaves.  The   curves labelled $W^{s}_{g}$ and $W^{\prime}_{g}{}^{s}$ represent    $\mathcal{W}^{s}_{g}$-leaves of radius $r$ based at $x$ and $g(x)$ respectively.  Note that $f$ shrinks $W^{s}_{g}$ by a factor $<\nu $ since $W^{s}_{g}$ is approximately tangent to $E^{s}$.}
\label{f.fga}
\end{figure}

We next define two nonlinear fiber contractions and study their invariant sections.  In both cases the total space is 
$$
W = \{(x,y) \in M \times  M : y \in W^{s}_{g}(x,r)\} 
$$
and the projection     $\pi : W \rightarrow  M$ is $\pi (x,y) = x$.  \emph{A priori the bundle $(W,\pi , M)$ is not $C^{1}$.}  For although $M$ and $\pi $ are   smooth, the set $W$ is locally the graph of $\mathcal{W}^{s}_{g}$, a foliation quite likely to be H\"older at best.

The fiber maps are products, $F(x,y) = (a(x),g(y))$ and $G(x,y) = (g(x),a(y))$,
$$
\begin{CD}
 W @>\text{\normalsize
$\qquad 
F \qquad$}>> 
W
\\
@V\text{\normalsize$
\pi  
$}VV @VV\text{\normalsize$
\pi  
$}V
\\
M @>\text{\normalsize$\qquad 
a 
\qquad$}>>
M
\end{CD}\qquad \qquad 
\begin{CD}
 W @>\text{\normalsize
$\qquad 
G \qquad$}>> 
W
\\
@V\text{\normalsize$
\pi 
$}VV @VV\text{\normalsize$
\pi 
$}V
\\
M @>\text{\normalsize$\qquad 
g 
\qquad$}>>
M 
\end{CD} 
$$
The $\pi $-fiber at $x$ is $W^{s}_{g}(x,r)$, and it is contracted by $g$ into $W^{s}_{g}(g(x),\nu r)$.  The $\pi $-fiber over $a(x)$ is $W^{s}_{g}(a(x), r)$, and since  $a(x) \in W^{s}_{g}(g(x), r)$ is much closer to $g(x)$ than $\nu r$,   we have
 $$
  g(W^{s}_{g}(x,r)) \subset  W^{s}_{g}(g(x),\nu r)  \subset 
 W^{s}_{g}(a(x), r) \ ,
 $$
which means that $F$ contracts fibers over $a$.     For the same reasons we have 
$$
a(W_{g}^{s}(x,r)) \subset W^{s}_{g}(a(x), \nu r) \subset W^{s}_{g}(g(x), r) \ ,
$$
which means that $G$ contracts fibers over $g$.
See Figure~\ref{f.fga}.

 Fiber contractions have unique invariant sections.
  Let $\sigma : M \rightarrow W$  be the $F$-invariant section and $\tau : M \rightarrow W$ be the $G$-invariant section.   Since $W \subset  M \times  M$ we can write
  $$
  \sigma (x) = (x, s(x)) \qquad \tau (x) = (x, t(x)) \ .
  $$
The maps $s, t : M \rightarrow  M$ are the \textbf{principal parts} of the sections $\sigma , \tau $.    We claim that $s$ and $t$ are inverse leaf conjugacies between $\mathcal{E} $ and $\mathcal{E}_g $.  
  
  Since $\sigma (x) = (x, s(x))$ is $F$-invariant and $F = a \times g$, the $a$-orbit of $x$ and the $g$-orbit of $s(x)$ shadow each other closely.  Since $a$-orbits are $f$ pseudo-orbits, the $g$-orbit of $s(x)$ is closely shadowed by an $f$ pseudo-orbit through $x$.  Therefore     $s : \mathcal{E} \rightarrow  \mathcal{E}_g$ is a canonical leaf conjugacy.  
It has an inverse map, $s^{-1}$.  Now, 
    $F$-invariance implies that $g(s(x)) = s(a(x))$, and therefore 
 $
  s^{-1}((g(x)) = a(s^{-1}(x)) 
 $.
  Consequently 
  $$
  G(x,s^{-1}(x)) = (g(x), a(s^{-1}(x)) = (g(x), s^{-1}(g(x))
  $$
   which implies that $x \mapsto (x, s^{-1}(x))$ is a $G$-invariant section, i.e., it is the unique $G$-invariant section $\tau $.  Equal sections have equal principal parts, so  $s^{-1} = t$.

 The amalgam mapping   $a : M \rightarrow  M$  is a homeomorphism which carries  $W^{s}_{g}(x,r)$ into $W^{s}_{g}(g(x), r)$ $C^{1}$ diffeomorphically.   Therefore it sends  the global stable manifold $W^{s}_{g}(x)$ diffeomorphically  onto the global stable manifold $W^{s}_{g}(g(x))$.  When we put the leaf topology on $M$ with respect to the foliation $\mathcal{W}^{s}_{g}$ we get a $C^{1}$ non-separable manifold $V$ of dimension $s$ whose connected components are the global stable manifolds $W^{s}_{g}(x)$, and $a : V \rightarrow  V$ is a $C^{1}$ diffeomorphism.  
 
 We have a bundle $\mathcal{W}$ over $V$.  Its total space is $W$ with the leaf topology on its base $V = \bigcup W^{s}_{g}$.  Its  fiber at $x$ is $W^{s}_{g}(x,r)$.    On $\mathcal{W}$ we have two fiber contractions, $F$ and $G$. We claim that $\mathcal{W}$   is uniformly $C^{1}$ and  the Uniform H\"older Section Theorem (Theorem~\ref{t.UHST}) applies to it.  
 
 The fiber  $W^{s}_{g}(x,r)$ varies in a fairly trivial $C^{1}$ fashion   as $x$ varies in the global stable manifold $W^{s}_{g}(x)$.  Since each connected component   $W^{s}_{g} \subset  V$ is simply connected, any disc bundle over it (such as $\mathcal{W}|_{W^{s}_{g}}$)  is trivial.  A priori this triviality is not uniform.  That is, there need be no relation between the trivializing vector fields at points of $W^{s}_{g}$ which are nearby each other in $M$ but distant along $W^{s}_{g}$.  
 This is why we need an invariant section theorem in which the bundle is only locally trivial.  
 
 The proof that $\mathcal{W}$ does have such a uniform $C^{1}$ bundle structure to which Theorem~\ref{t.UHST} applies is distractingly technical and relegated to the appendix following this section.  Admitting this, we see that the leaf conjugacy $s : \mathcal{E} \rightarrow  \mathcal{E}_g$ is uniformly $\theta $-H\"older, and so is the inverse  leaf conjugacy $t : \mathcal{E}_g \rightarrow  \mathcal{E}$, when   
$ \nu  < \mu ^{\theta }$.  

To complete the proof of Proposition~\ref{p.A}  we show that the $\mathcal{E}_{g}$-holonomy is $\theta ^{2}$-H\"older.

 Consider points $p, p^{\prime}$ with $p^{\prime} \in \mathcal{E}(p)$  and  draw a path $ \alpha \subset  \mathcal{E}(p)$ from $p$ to $p^{\prime}$.  Corresponding to $\alpha$ we have the $\mathcal{E}_{g}$-holonomy map $h_{g} $.  It sends $x \in W^{s}_{g}(p,r)$ to $x^{\prime} \in W^{s}_{g}(p^{\prime},r^{\prime})$ such that $x, x^{\prime}$ lie on a common $\mathcal{E}_{g}$-leaf $L_{g}$ and are joined by a path on $L_{g}$ that approximates $\alpha$.  The radii $r \leq r^{\prime}$ are small.  Thus, $h_{g} = s^{\prime} \circ  h \circ  t$  where $t$ is the restriction of the inverse leaf conjugacy $\mathcal{E}_{g} \rightarrow  \mathcal{E}$ to $W^{s}_{g}(p)$, $h$ is the $C^{1}$ holonomy map for $\mathcal{E}$,  and $s^{\prime}$ is the restriction of the forward leaf conjugacy $s : \mathcal{E} \rightarrow  \mathcal{E}_{g}$ to $W^{s}(p^{\prime},r^{\prime})$. See Figure~\ref{f.holonomy5}.
\begin{figure}[htbp]
\centering
\includegraphics[scale=.60]{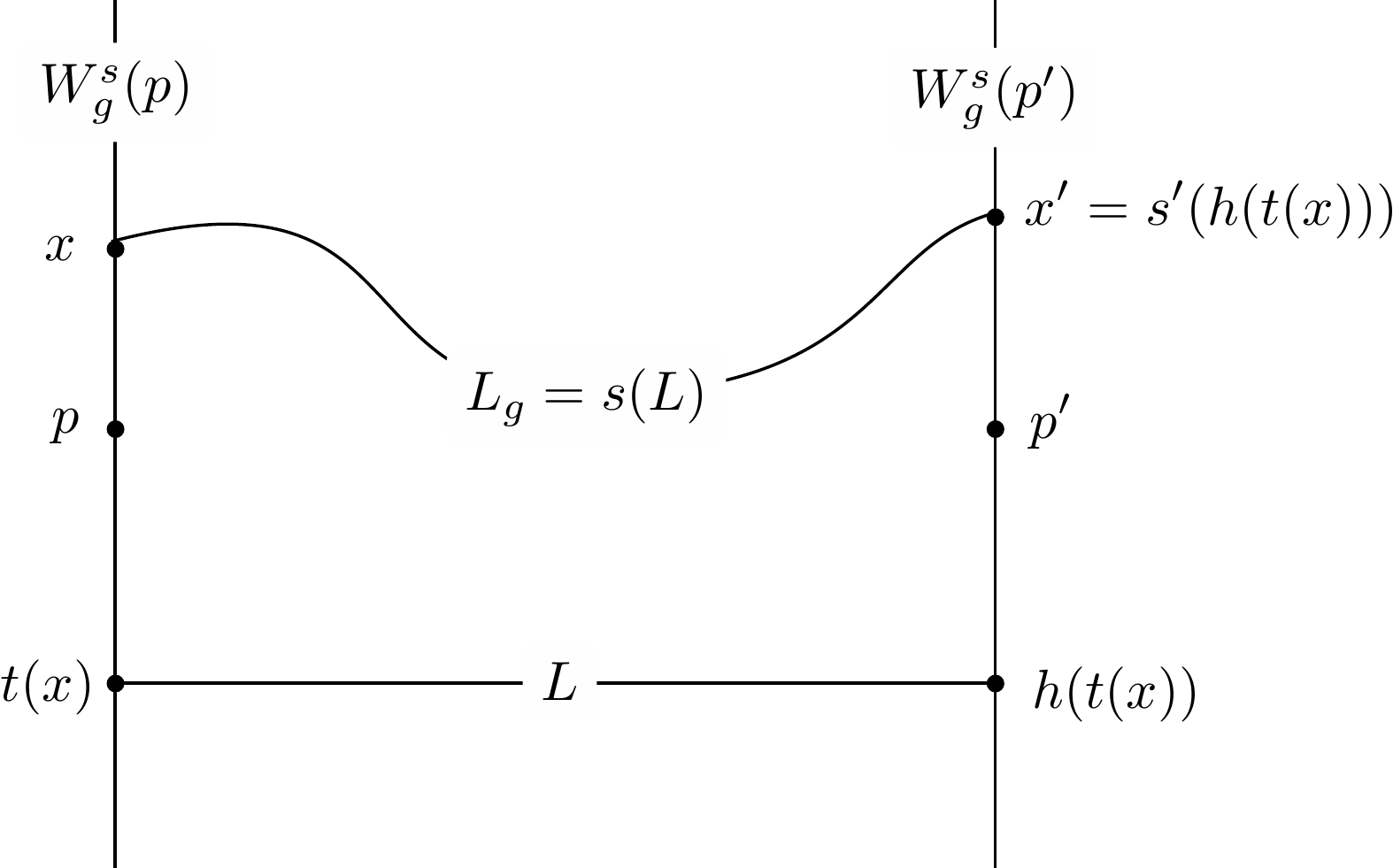}
\caption{The leaf conjugacy $s : \mathcal{E} \rightarrow  \mathcal{E}_{g}$ carries the $\mathcal{E}$-leaf $L$ through $t(x)$ to the $\mathcal{E}_{g}$-leaf  $L_{g}$ through $x$. The map $x \mapsto  x^{\prime}$ is the $\mathcal{E}_{g}$-holonomy.}
\label{f.holonomy5}
\end{figure}
 Since H\"older exponents multiply under composition,  the $\mathcal{E}_{g}$ holonomy maps are $\theta ^{2}$-H\"older.
\end{proof}

\begin{proof}[\bf Proof of Theorem~A]  We are given a diffeomorphism $f$ that is normally hyperbolic at a foliation $\mathcal{F}$.  It is assumed that its splitting is $C^{1}$, so in particular   the summands $E^{cu}$, $E^{c} = T\mathcal{F}$, and $E^{cs}$ integrate to unique $C^{1}$ $f$-invariant foliations $\mathcal{W}^{cu}$, $\mathcal{W}^{ c}= \mathcal{F}$, and $\mathcal{W}^{cs}$.  The diffeomorphism $f$ is normally hyperbolic at $\mathcal{W}^{cu}$ with respect to the splitting $T\mathcal{W}^{cu} \oplus E^{s}$.  Likewise, $f$ is normally hyperbolic at $\mathcal{W}^{c} = \mathcal{F}$ and at $\mathcal{W}^{cs}$.  Since the foliations are $C^{1}$ they are plaque expansive and structurally stable:   If $g$ $C^{1}$-approximates $f$ then  we have unique nearby $g$-invariant foliations $\mathcal{W}^{cu}_{g}$, $\mathcal{W}^{c}_{g} = \mathcal{F}_{g}$, $\mathcal{W}^{cs}_{g}$, and we have canonical leaf conjugacies from the $f$-invariant foliations to the corresponding $g$-invariant foliations.  

We want to show that   these leaf conjugacies   are biH\"older, and the   holonomy maps along the $g$-invariant foliations are   H\"older.  

 Let $T$ be the circle of length $2$.  The $f$-invariant foliation  $\mathcal{W}^{cu}$ is  $C^{1}$ and normally contracting, so the product   foliation
 $$
\mathcal{S}^{cu}  = T \times  \mathcal{W}^{cu}  \subset  T \times  M
 $$
   is also $C^{1}$ and  normally contracting with respect to the diffeomorphism
 $$
 F (t, x) = (t, f(x)) \ .
 $$
Its leaves are products $T \times W^{cu}(x)$.   The contraction rates are the same for $F $ and  $f$. 
Since $g$ $C^{1}$-approximates $f$ there is a $C^{1}$ loop of diffeomorphisms $g_{t} : M \rightarrow  M$ that $C^{1}$-approximate $f$ such that $g_{0} = f = g_{2}$ and $g_{1} = g$.  
The diffeomorphism 
$$
G(t,x) = (t, g_{t}(x))
$$
$C^{1}$-approximates $F$.  (These suspension diffeomorphisms $F$ and $G$   are different from the fiber maps $F$ and $G$ in Proposition~\ref{p.A}.)     Proposition~\ref{p.A} applies to  the suspension diffeomorphisms  so there     exists a unique $G$-invariant foliation $\mathcal{S}^{cu}_{G}$ near $\mathcal{S}^{cu}$ all of whose holonomy maps are $\theta ^{2}$-H\"older when $\nu  < \mu ^{\theta }$. Since $\mathcal{W}^{cu}_{g}$ is a slice of $\mathcal{S}^{cu}_{G}$,   the $\mathcal{W}^{cu}_{g}$-holonomy maps are also $\theta ^{2}$-H\"older.   By Theorem~\ref{t.Amie1} the suspension holonomy of $\mathcal{S}^{cu}_{G}$ gives a canonical leaf conjugacy   $\mathfrak{h}^{cu} : \mathcal{W}^{cu} \rightarrow \mathcal{W}^{cu}_{g}$, which is  $\theta ^{2}$-biH\"older.

Similarly we have $\mathcal{S}^{cs} = T \times  \mathcal{W}^{cs}$ and  $ \mathcal{S}^{cs}_{G}$  whose holonomy maps are $ \theta ^{2}$-H\"older when $\widehat{\nu } < \widehat{\mu }^ \theta $. Since $\mathcal{W}^{cs}_{g}$ is a slice of $\mathcal{S}^{cs}_{G}$,   the $\mathcal{W}^{cs}_{g}$-holonomy maps are also $ {\theta} ^{2}$-H\"older.
By Theorem~\ref{t.Amie1} the suspension holonomy of $\mathcal{S}^{cs}_{G}$ gives a canonical leaf conjugacy   $\mathfrak{h}^{cs} : \mathcal{W}^{cs} \rightarrow \mathcal{W}^{cs}_{g}$, which is  $ {\theta} ^{2}$-biH\"older.

To complete the proof we take intersections.  
Theorem~\ref{t.intersection} implies that the intersection foliation $\mathcal{S}^{c}_{G} = \mathcal{S}(\mathcal{F}_{g})$ has $\theta ^{2}$-H\"older  holonomy. 
Since $\mathcal{W}^{c}_{g} = \mathcal{F}_{g}$ is a slice of $\mathcal{S}^{c}_{G}$,   the $\mathcal{W}^{c}_{g}$-holonomy maps are also  $\theta ^{2}$-H\"older. 
By Theorem~\ref{t.Amie1} the suspension holonomy of $\mathcal{S}^{c}_{G}$ gives a canonical $\theta ^{2}$-H\"older leaf conjugacy   $\mathfrak{h}  : \mathcal{F}  \rightarrow \mathcal{F} _{g}$.
\end{proof}
 
\begin{Rmk}
Why do we need suspension in the proof of Theorem~A from Proposition~\ref{p.A}?  After all,   Proposition~\ref{p.A} applies directly to $\mathcal{W}^{cu}$ and $\mathcal{W}^{cs}$ when they are $C^{1}$ and implies that the leaf conjugacies $\mathfrak{h}^{cu} : \mathcal{W}^{cu} \rightarrow  \mathcal{W}^{cu}_{g}$ and $\mathfrak{h}^{cs} : \mathcal{W}^{cs} \rightarrow  \mathcal{W}^{cs}_{g}$ are H\"older.  Unfortunately we have no conjugacy intersection result to conclude from this that the center leaf conjugacy $\mathfrak{h}^{c} : \mathcal{W}^{c} \rightarrow  \mathcal{W}^{c}_{g}$ is H\"older.  But we do have a holonomy intersection  result, namely Theorem~\ref{t.intersection}.  That is why we convert the leaf conjugacy question to a suspension holonomy question.  See also Remark~\ref{k.translation} in Section~\ref{s.cautionary}.  
\end{Rmk}

\section*{ Appendix.  Uniform Bundle Charts}

In the proof of Proposition~\ref{p.A} the bundle $\mathcal{W}$ has total space $W$, base space $V$, and dimension $2s$.  We will construct bundle charts $\varphi  : \mathbb{R}^{s}(r) \times  \mathbb{R}^{s}(r) \rightarrow W$ which are uniformly  $C^{1}$ and in which the fiber maps  $F, G$ are uniformly $C^{1}$.  ($F = a \times  g$ and $G = g \times  a$ are the fiber maps from Proposition~\ref{p.A}.  They are different from the diffeomorphisms $F$, $G$ in the proof of Theorem~A.)   By this we mean two things:
\begin{itemize}

\item 
The chart transfer maps for overlapping charts are $C^{1}$ and their first derivatives are uniformly bounded and uniformly continuous.
\item  
The chart expressions for $F$ and $G$   have uniformly bounded, uniformly continuous first derivatives.
 \end{itemize} 
   The metric in which we measure everything will be a smooth metric on $TM$.  In addition, all these derivatives and charts will be continuous from one component of $V$ to another.

We start by smoothing the splitting.  The original normally  hyperbolic splitting $TM = E^{u} \oplus E^{c} \oplus E^{s}$ is orthogonal with respect to a continuous adapted Riemann structure.  (The bundle $E^{c}$ is $T\mathcal{F}$.) The smoothed splitting
$$
TM = \widetilde{E}^{u} \oplus \widetilde{E}^{c} \oplus \widetilde{E}^{s}
$$
uniformly approximates the original splitting and is orthogonal with respect to a smooth Riemann structure that approximates the original Riemann structure.  This is standard.   Although the smoothed splitting is not invariant we have
$$ Tf = 
\begin{bmatrix}
   \widetilde{T}^{u}f    &    {\displaystyle *}   &   {\displaystyle *} 
\\
   {\displaystyle *}    &      \widetilde{T}^{c}f   &    {\displaystyle *}
  \\
   {\displaystyle *}    &   {\displaystyle *}   &       \widetilde{T}^{s}f
\end{bmatrix}
\quad \textrm{with respect to} \quad \widetilde{E}^{u} \oplus \widetilde{E}^{c} \oplus \widetilde{E}^{s}  \ ,
$$ 
where the off-diagonal terms are small and $\widetilde{T}^{u}f \oplus \widetilde{T}^{c}f \oplus \widetilde{T}^{s}f$ approximates $T^{u}f \oplus T^{c}f \oplus T^{s}f$.

Let $\rho $ be a plaque of   $\mathcal{W}^{s}$ at $p$.  If $\exp_{p}$ is the smooth exponential at $p$ then $\overline{\rho } = \exp_{p}^{-1}(\rho )$ is the graph of a $C^{1}$ function   $\gamma  : \widetilde{E}^{s}(p,r) \rightarrow \widetilde{E}^{cu}(p)$.  The function $\gamma $ has Lipschitz constant $\leq  1$, it has value $0$ at the origin of $\widetilde{E}^{s}(p)$,  and its derivative with respect to $x \in \widetilde{E}^{s}(p, r)$ is uniformly continuous with respect to $x, p$.    This is all because $\mathcal{W}^{s}$ is a regular foliation, the smoothed splitting approximates the original splitting,  and $r$ is small.  Let $\operatorname{gr}(\gamma ) : \widetilde{E}^{s}(p,r) \rightarrow T_{p}M$ denote the function whose image is the graph of $\gamma $. 

Now we define  charts on $\mathcal{W}$ in which to express this.  We      cover $M$ with finitely many smooth charts $  \mathbb{R}^{m} \rightarrow  U \subset  M$ over which the smoothed splitting is trivial.  We    choose smooth orthonormal trivializing vector fields  $Y_{1}, \dots  , Y_{s}$ for $\widetilde{E}^{s}_{U}$.  Then for $t_{1}, \dots  , t_{s}$ we set  
$$
\eta (p,t_{1}, \dots , t_{s}) = t_{1}Y_{1}(p) + \dots  + t_{s}Y_{s}(p) \ ,
$$
which makes $\eta $   a linear isometry sending $\mathbb{R}^{s}(r)$ to $\widetilde{E}^{s}(p,r)$ and depending smoothly on $p$.  The composition
$$
\exp \circ \operatorname{gr}(\gamma ) \circ  \eta 
$$
is a uniformly $C^{1}$ parameterization of the strong stable plaque $W^{s}(p,r)$.  We denote the parameterization as   $\rho (p,t)$ and define a corresponding $\mathcal{W}$-chart $\varphi  _{p}: \mathbb{R}^{s}(r) \times  \mathbb{R}^{s}(r) \rightarrow  W$ by 
$$
\varphi _{p}(x,y) = (\rho (p,x),\rho (q,y)) \quad \textrm{where} \quad q = \rho (p,x) \ .
$$
As $U$ ranges through the finite set of chart neighborhoods $U$ that cover $M$ and $p$ ranges through $U$ this gives a bundle atlas $\mathcal{A}$  for $\mathcal{W}$.  Chart transfers are uniformly $C^{1}$ since they are just related by different choices of trivializing vector fields, and one set of trivializing vector fields is related to another by orthogonal maps $\widetilde{E}^{s}(p) \rightarrow  \widetilde{E}^{s}(p)$ that depend smoothly on $p$. 

The amalgam map $a$ is defined from globally $C^{1}$ data, namely the uniformly $C^{1}$ transversals $W^{s}_{g}$,  the diffeomorphism $f$, and the uniformly $C^{1}$ holonomy maps associated to the $f$-invariant $C^{1}$ foliation $\mathcal{F}$.  Thus the fiber contractions $F = a \times  g$ and $G = g \times  a$ on $\mathcal{W}$ are uniformly $C^{1}$ when represented in the charts $\varphi _{p} \in \mathcal{A}$.  The key quantities in the hypotheses of Theorem~\ref{t.UHST} are the vertical shear   and the contraction rates of fiber versus base.  They are now easy to estimate.    

The vertical shear is uniformly $1$-bounded because $F$ and $G$ are uniformly $C^{1}$.  The fiber contraction of $F$ is $\norm{T^{s}g}$ and the base contraction is $\pmb{m}(T^{s}a)$.   Since $a$ $C^{1}$-approximates $g$ and $g$ $C^{1}$-approximates $f$ the stable bunching condition $\nu < \mu ^{\theta }$, which is  
$$
\sup_{p \in M} \frac{\norm{T^{s}_{p}f}}{(\pmb{m}(T^{s}_{p}f))^{\theta }} < 1 \ ,
$$
gives
$$
\sup_{p \in M} \frac{\norm{T^{s}_{p}g}}{(\pmb{m}(T^{s}_{p}a))^{\theta }} < 1 \quad \textrm{and} \quad  \sup_{p \in M} \frac{\norm{T^{s}_{p}a}}{(\pmb{m}(T^{s}_{p}g))^{\theta }} < 1 \ .
$$
Thus the fiber contractions of $F$ and $G$ uniformly $\theta $-dominate the base contractions, and
  Theorem~\ref{t.UHST} implies that the unique $F$- and $G$-invariant sections $\sigma $ and $\tau $  are $\theta $-H\"older.

 \section{The proof of Theorem~B}  
 \label{s.C}
 
 Theorem~B concerns   uniformly compact laminations, and the following result of  David  Epstein is used.  See also \cite{Pablo}.
 
 \begin{Thm}
 \label{t.DBAE} \cite{DBAE} 
Each leaf of a uniformly compact lamination has arbitrarily small laminated neighborhoods. 
\end{Thm}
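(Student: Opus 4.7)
Given a leaf $L$ of the uniformly compact lamination $\mathcal{L}$ and an open neighborhood $U \supset L$ in $M$, the goal is to produce a smaller open set $V$ with $L \subset V \subset U$ such that $V\cap\Lambda$ is saturated by leaves of $\mathcal{L}$. The plan is to combine the uniform leaf-volume bound with the continuity of the lamination structure in order to reduce the problem to a finite combinatorial one, in the spirit of Reeb's local stability theorem.

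First I would cover the compact leaf $L$ by finitely many lamination boxes $\varphi_1,\dots ,\varphi_k$ whose images $B_i$ lie in $U$, arranged so that the half-size subboxes $B_i'$ already cover $L$. Because each $\varphi_i$ is defined on the compact product $D^c\times Y_i$ and has $\partial\varphi_i/\partial x$ continuous and nonsingular, the volume of any plaque in $B_i$ is uniformly bounded below by some $v_0>0$. Combined with the uniform upper bound $V_{\max}$ on leaf volume, this gives an a priori integer bound $N=\lceil kV_{\max}/v_0\rceil$ on the number of plaques any single leaf of $\mathcal{L}$ can have inside $\bigcup B_i$. This step is the only place where uniform compactness is used in its full strength.

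Next I would argue by contradiction: suppose no saturated neighborhood of $L$ fits inside $U$, so there is a sequence of leaves $L_n\in\mathcal{L}$ meeting $L$ within Hausdorff distance $1/n$ yet containing points $q_n\notin U$. By compactness of $M$ pass to a subsequence with $L_n\to K$ in Hausdorff distance, where $K\supset L$ and $K\not\subset U$. Inside each $B_i$, every $L_n\cap B_i$ is a disjoint union of at most $N$ plaques that are graphs over $y$-slices of $\varphi_i$, so by transverse continuity of the lamination together with the Arzel\`a--Ascoli theorem applied slicewise, $K\cap B_i$ is again a finite union of at most $N$ plaques of $\mathcal{L}$. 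Hence $K$ is a finite union of leaves of $\mathcal{L}$ near $L$, strictly larger than $L$ itself. Choosing $V$ to be the set of $p$ in an $\varepsilon$-tube around $L$ such that the holonomy chain of at most $N$ plaques starting at the $B_{i_0}'$-plaque of $\mathcal{L}(p)$ remains in $\bigcup B_i'$ for all $\varepsilon$ small, one sees that $V$ is open, saturated, and contained in $U$; the contradiction hypothesis is ruled out exactly because the limit leaves in $K\setminus L$ would supply additional plaque-volume in at least one $B_i$, forcing $\mathrm{vol}(L_n)>V_{\max}$ in the limit.

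The main obstacle is the passage from Hausdorff convergence $L_n\to K$ to the conclusion that $K$ is itself a laminated set, and then to the volume inequality that produces the contradiction. Locally inside a single $B_i$ this is just Arzel\`a--Ascoli for graphs over the transversal, but one must track how plaques on overlapping boxes glue in the limit, and show that the plaque count on $L_n$ is eventually bounded below by the plaque count on $K$ (a kind of lower semi-continuity of plaque cardinality), so that an extra limit leaf contributes at least $v_0$ of uncounted volume. Once this bookkeeping is in place, the contradiction with $\mathrm{vol}(L_n)\leq V_{\max}$ is immediate, and shrinking $U$ yields arbitrarily small laminated neighborhoods.
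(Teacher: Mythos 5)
The paper does not actually prove this theorem; it cites Epstein's paper (reference [DBAE]) and gives only a four-sentence heuristic: if no small laminated neighborhood exists, take leaves $L_n$ escaping a fixed neighborhood $U$ while passing ever closer to $L$, argue that $L_n$ must ``spiral away'' from $L$ and hence have large volume, contradicting uniform compactness, and then explicitly note ``the details of the proof are not so simple.''  Your proposal is an attempt to flesh out exactly that heuristic, which is the right thing to try, and the setup is sound: finitely many lamination boxes $B_i$ inside $U$, a lower bound $v_0$ on plaque volume, the resulting a priori bound $N$ on the number of plaques any leaf can have in $\bigcup B_i$, a sequence $L_n$ of escaping leaves, and a Hausdorff limit $K \supsetneq L$.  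Your observation that $K$ is again a union of at most $N$ plaques per box (hence a finite union of leaves) via slicewise Arzel\`a--Ascoli is also correct, using that $\Lambda$ is closed so that limits of plaques are plaques.

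The gap is the final contradiction.  You write that ``the limit leaves in $K\setminus L$ would supply additional plaque-volume in at least one $B_i$, forcing $\mathrm{vol}(L_n)>V_{\max}$ in the limit,'' and later that ``once this bookkeeping is in place, the contradiction \ldots\ is immediate.''  It is not.  What lower semi-continuity of plaque count gives you, for large $n$, is at least one more plaque in $L_n$ than in $L$, i.e.\ something like $\mathrm{vol}(L_n) \geq \mathrm{vol}(L) + v_0 - o(1)$, or at best $\mathrm{vol}(L_n)$ bounded below by roughly $\mathrm{vol}(L) + \mathrm{vol}\bigl(L'\cap\bigcup B_i\bigr)$ where $L'$ is the extra leaf of $K$.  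Neither bound need exceed $V_{\max}$: nothing in the setup prevents $\mathrm{vol}(L) + \mathrm{vol}(L')$ from being well below the uniform volume bound, so a \emph{single} extra limit leaf produces no contradiction.  The phrase ``spirals away'' in the paper's sketch signals exactly what your argument is missing: one has to show that the escaping leaves $L_n$ wrap around $L$ an unboundedly large number of times, so that their plaque count (hence volume) tends to infinity.  That requires controlling the holonomy of $L$, or an iterated accumulation argument producing arbitrarily many distinct leaves in the limit set, and it is precisely the hard part of Epstein's proof.  Your proposal stops just short of it; to complete the argument you would need to import that holonomy/induction machinery from Epstein rather than appeal to a single extra limit leaf.
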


A neighborhood of $L \in \mathcal{L}$ in $\Lambda $ is laminated if it consists of whole leaves of $\mathcal{L}$.  The idea of the proof is simple.  Let $L$ be a leaf of the uniformly compact lamination $\mathcal{L}$.  If the assertion is false there exist leaves $L_{n} \in \mathcal{L}$ containing points $p_{n}, q_{n}$ such that $p_{n}$ converges to some $p \in L$ and $q_{ n}$ converges to some $q \notin L$.   In order that $L_{n}$ leaves the neighborhood of $L$ it is necessary that $L_{n}$ ``spirals away'' from $L$, which causes it to have a large volume, contrary to uniform compactness of $\mathcal{L}$.  The details of the proof are not so simple.

A second result used in the proof of Theorem~B concerns plaque expansivity.

 \begin{Propn}
\label{p.pe}  \cite{Pablo} Plaque expansivity is implied by  uniform compactness, normal hyperbolicity, and dynamical coherence.
\end{Propn}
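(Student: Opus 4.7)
The plan is to quotient out the lamination and show that the induced dynamics on the leaf space is expansive in the classical sense. Uniform compactness makes the quotient well behaved, and normal hyperbolicity plus dynamical coherence make the quotient hyperbolic, so plaque expansivity should drop out.

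First I would use Epstein's Theorem~\ref{t.DBAE} to equip the leaf space $\Lambda/\mathcal{L}$ with a compact Hausdorff topology: the existence of arbitrarily small laminated neighborhoods of each leaf means that distinct leaves can be separated by saturated open sets, and compactness descends from $\Lambda$. The map $f$ permutes leaves, so it induces a homeomorphism $\bar f:\Lambda/\mathcal{L}\to\Lambda/\mathcal{L}$. I would then endow $\Lambda/\mathcal{L}$ with a compatible metric $\bar d$ by taking, for leaves $L,L'$ close enough to lie in a common lamination box, a Hausdorff-type distance; uniform compactness plus the lamination box structure guarantees $\bar d(\bar x,\bar y)\to 0$ whenever $d(x,y)\to 0$ for representatives.

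Next I would observe that a $\delta$-pseudo orbit $(x_n)$ that respects a plaquation $\mathcal{P}$ projects to a genuine $\bar f$-orbit in $\Lambda/\mathcal{L}$, because the defining condition of ``respects $\mathcal{P}$'' says $f(x_n)$ and $x_{n+1}$ share a plaque and hence a leaf, so $\bar x_{n+1}=\bar f(\bar x_n)$. Thus two such pseudo-orbits $(x_n),(y_n)$ with $d(x_n,y_n)<\delta$ for all $n$ give two genuine $\bar f$-orbits whose corresponding leaves stay $\bar d$-close for all time.

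The heart of the argument is then showing that $\bar f$ is expansive: there exists $\eta>0$ such that if $\bar d(\bar f^n\bar L,\bar f^n\bar L')<\eta$ for all $n\in\mathbb{Z}$ then $\bar L=\bar L'$. For this I would use dynamical coherence together with Proposition~\ref{p.dclam}: in a small laminated neighborhood of a leaf $L$, any other leaf $L'$ meets both $W^{cu}(L,r)$ and $W^{cs}(L,r)$ in relatively open sets, so one can define local ``unstable'' and ``stable'' distances between $\bar L$ and $\bar L'$ by measuring how far one must move along $W^u$-plaques and $W^s$-plaques (transverse to $\mathcal{L}$ inside the center-unstable and center-stable manifolds) to pass from $L$ to $L'$. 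Under iteration by $\bar f$, the normal hyperbolicity $\nu<1<\widehat\nu^{-1}$ forces the unstable distance to grow under $f^{-1}$ and the stable distance under $f$; uniform smallness of the orbit then forces both to vanish, giving $\bar L=\bar L'$. Once $\bar f$ is expansive with constant $\eta$, choosing $\delta$ so small that $d(x,y)<\delta$ implies $\bar d(\bar x,\bar y)<\eta$ and also $\delta$ is smaller than the plaque diameter, we conclude that $x_n,y_n$ lie in the same leaf and, being within $\delta$ of each other, in a common plaque of $\mathcal{P}$.

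The main obstacle is the expansivity step, because $\Lambda$ need not be a manifold and $\bar f$ has no intrinsic tangent hyperbolic splitting; one has to simulate the classical hyperbolic local product structure on $\Lambda/\mathcal{L}$ purely in terms of the ambient auxiliary foliations $\mathcal{W}^u,\mathcal{W}^s$ and the center-(un)stable manifolds, and verify that the relevant distances on the quotient are genuinely contracted/expanded by $\bar f^{\pm 1}$ at uniform rates. Dynamical coherence together with Proposition~\ref{p.dclam} is exactly what makes this local product picture on the quotient go through, and this is where I expect the bulk of the technical work to sit.
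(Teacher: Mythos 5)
Your reduction to expansivity of the induced map $\bar f$ on the leaf space cannot succeed, because under exactly the hypotheses of the proposition $\bar f$ need not be expansive. The paper's own Remark~\ref{k.Cat} exhibits a counterexample: the Seifert-fibered $3$-manifold double covered by $T^3$ carries a smooth, uniformly compact, normally hyperbolic, dynamically coherent center foliation by circles, invariant under the lift $f$ of the Cat Map, and yet the leaves $P$, $Q$ through the antipodal pairs $\{p,-p\}$ and $\{q,-q\}$ stay within a bounded Hausdorff distance under all iterates of $f$ --- forward iteration pairs the $p$-branch of $P$ with the $q$-branch of $Q$ while backward iteration pairs it with the $-q$-branch. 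Thus the ``heart of the argument'' you describe is attempting to prove a statement (leaf expansivity) that is strictly stronger than plaque expansivity and is simply false in this setting. There is also a second, independent gap at the last step: being on the same compact leaf and $\delta$-close in $M$ does not put two points in a common plaque of $\mathcal{P}$, since a compact leaf can wind around and bring two disjoint branches within $\delta$ of each other; the same Cat Map example shows this is not a hypothetical worry.

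The paper's proof operates at the level of plaques rather than leaves precisely to avoid this. Given nearby plaques $\rho$, $\sigma$ of leaves $P$, $Q$ whose plaque orbits $\delta$-shadow each other, dynamical coherence yields an intersection plaque $\xi = W^{cu}(\rho,r) \cap W^{cs}(\sigma,r)$ on a third leaf $X$, and Epstein's Theorem~\ref{t.DBAE}, via a small laminated neighborhood, confines $X$ to $W^{cu}(P,r)$. If $X\neq P$, then because $f$ overflows the local center unstable manifolds and $X$ is compact, forward iterates push $f^k(X)$ --- and with it $\xi_k$, which stays near $\sigma_k$ --- off $W^{cu}(f^k(P),r)$, so the plaque orbits spread to distance of order $r$; if $X\neq Q$, backward iteration produces the same contradiction. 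The only remaining possibility is $P=X=Q$ with $\rho$ and $\sigma$ overlapping. This argument tracks individual branches of leaves, which is exactly the information your quotient to $\Lambda/\mathcal{L}$ discards. To repair your strategy you would need a quotient that remembers branches (something like the holonomy groupoid rather than the leaf space), at which point the local-product-structure argument you sketch becomes essentially the paper's plaque-level argument in different clothing.
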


\begin{Rmk}
As noted above, plaque expansivity under the hypotheses of Theorem~\ref{t.IN}  is immediate.   For in the skew product case we have leaf expansivity, which implies plaque expansitivity.  It is interesting to note that in the uniformly compact case leaf expansivity can fail while    plaque expansivity remains true.  See Remark~\ref{k.Cat} in Section~\ref{s.cautionary} for further details.  
\end{Rmk}

\begin{proof}[\bf Proof of Proposition~\ref{p.pe}]    If $\rho , \sigma $ are nearby plaques of leaves $P, Q \in  \mathcal{L}$ then their local center unstable and center stable manifolds intersect in  a plaque of a third leaf, say
$$
\xi = W^{cu}(\rho , r) \cap W^{cs}(\sigma , r) 
$$ 
is a plaque of $X$.  Here we use dynamical coherence as defined in Section~\ref{s.AB} to assert that the intersection of the center unstable and center stable  plaques is the plaque of a leaf.     By Theorem~\ref{t.DBAE} we can assume $X \subset W^{cu}(P,r)$.  See Figure~\ref{f.dc}.
\begin{figure}[htbp]
\centering
\includegraphics[scale=.60]{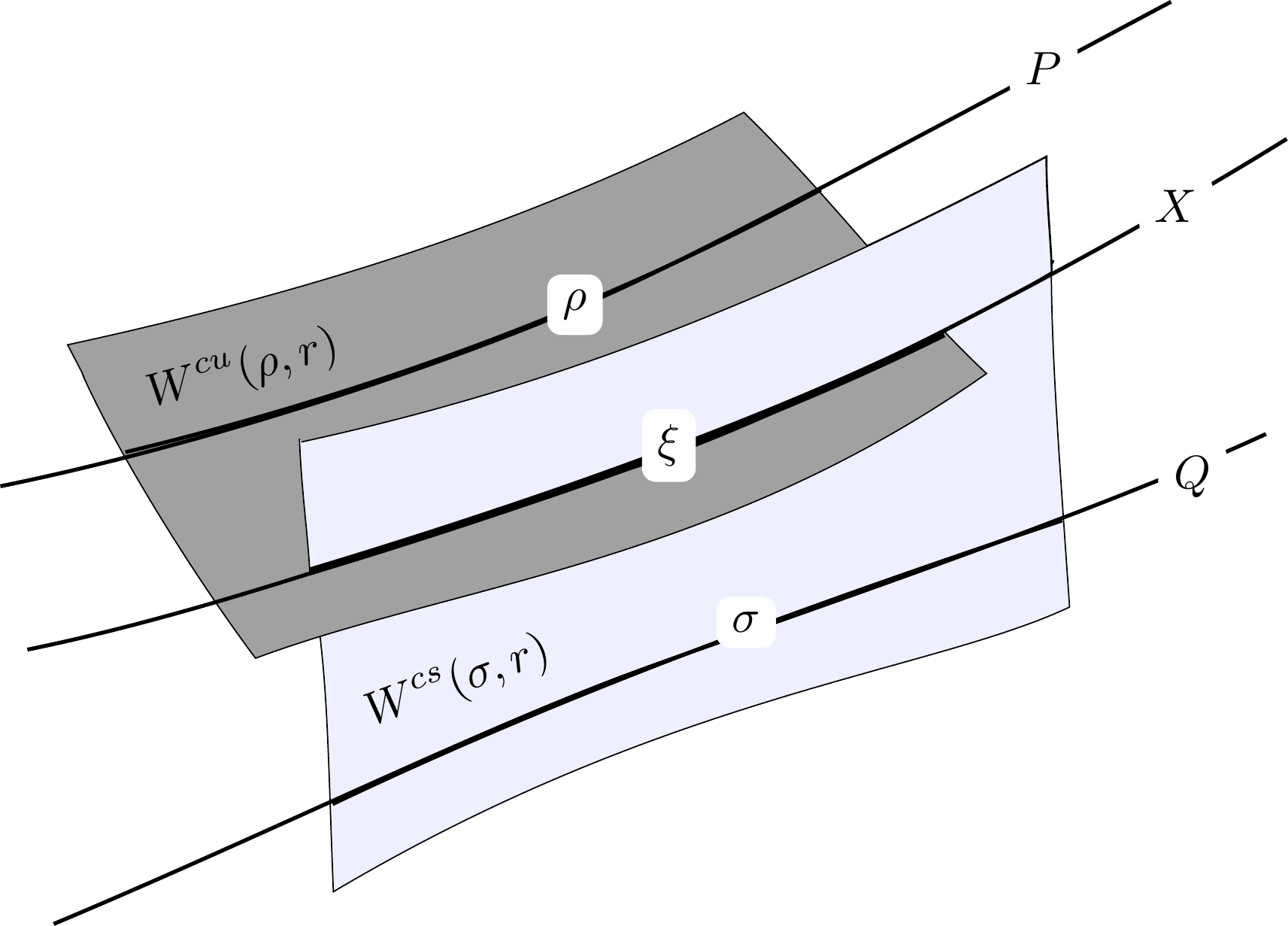}
\caption{The radius $r$ should be much less than the size of the plaques $\rho $, $\sigma $ and much greater than the distance between them although in  the figure the three quantities are not much different. The size of $\xi $ will then be on the same order as that of $\rho $ and $\sigma $.}
\label{f.dc}
\end{figure}

Consider plaque  orbits $(\rho _{k})$ and $(\sigma _{k})$ starting at $\rho $ and $\sigma $  such that the distance between $\rho _{k}$ and $\sigma _{k}$ is always much less than $r$.   Since the plaque orbits respect the lamination and $f$ preserves the laminations, the corresponding intersection plaque $\xi _{k}$ is contained in $f^{k}(X)$.  We must show that $\rho  $ and $\sigma $ overlap.

\underline{Case 1.}  $X \not=  P$.  Since $f$ overflows the family of local center unstable manifolds and $X \subset  W^{cu}(P,r)$ is compact, forward iterates  $f^{k}(X)$  are eventually pushed off $W^{cu}(f^{k}(P),r)$.   The   intersection plaque $\xi _{k}$ is contained in $f^{k}(X)$ and is therefore eventually pushed off $W^{cu}(f^{k}(P), r)$.  Correspondingly, $\xi _{k}$ and $\sigma _{k}$ become close together.  Thus, for some large   $k$ the distance between $\rho _{k}$ and $\sigma _{k}$ is on the order of $r$,   a contradiction to the assumption that the two plaque  orbits stay much closer together than $r$.

\underline{Case 2.}  $X     \not= Q$.  Using inverse iterates, we arrive at   the same contradiction.

Since the Cases 1 and 2 lead to contradictions, $P = X = Q$.  Then $\rho $ and $\sigma $ are nearby plaques on a common leaf, and such plaques always     overlap.
\end{proof}

\begin{Rmk}
We do not know whether dynamical coherence is necessary in Proposition~\ref{p.pe}.  See Remark~\ref{k.dc} in Section~\ref{s.cautionary}.
\end{Rmk}

\begin{proof}[\bf Proof of Theorem~B]  We are given a $C^{1}$  diffeomorphism $f$ which is normally hyperbolic and dynamically coherent at a uniformly compact lamination $\mathcal{L}$.  We assert that its center holonomy maps are  H\"older, and   if $g$ $C^{1}$-approximates $f$ then a canonical leaf conjugacy $\mathcal{L} \rightarrow  \mathcal{L}_{g}$ is  biH\"older.

We first examine the center holonomy maps restricted to the center unstable manifolds.  By the proof Theorem~4.3 in  \cite{PSW97}  the choice of transversals affects the H\"older constant but not the H\"older exponent of a holonomy map.  Thus  we can use the local strong unstable manifolds of $f$ (or of $g$ if $g$ perturbs $f$) as transversals.  So let $h$ be a center holonomy map $W^{u}(p,r) \rightarrow W^{u}(p^{\prime},r^{\prime})$.  It is  determined by a path $\xi : [0,1] \rightarrow  L$ from $p$ to $p^{\prime}$ in the leaf $L \in \mathcal{L}$ containing $p, p^{\prime}$.   
We claim that $h$ is $ \theta  $-H\"older when  
$$
\widehat{\nu } < \widehat{\mu }^{ \theta }  \ .
$$
To cut down on the number of hats and reciprocal hats we set
$$
\lambda = \widehat{\nu }^{-1} \quad \omega = \widehat{\mu }^{-1}   \ .
$$
Then $1 < \lambda < T^{u}f < \omega $ and we assert $h$ is $\theta $-H\"older when $\omega ^{\theta } < \lambda $.    We write
\begin{equation*}
\begin{split}
\lambda ^{k}(p) &= \lambda (f^{k-1}(p)) \cdot \lambda (f^{k-2}(p)) \cdots \lambda (f(p)) \cdot \lambda (p)
\\
\omega  ^{k}(p) &= \omega  (f^{k-1}(p)) \cdot \omega  (f^{k-2}(p)) \cdots \omega  (f(p)) \cdot \omega  (p)
\end{split}
\end{equation*}
in order to make product formulas simple. 

Fix a small $R>0$ such that for all $p \in \Lambda $, all  $x \in W^{u}(p,R)$, and all unit vectors $v \in T_{x}(W^{u}(p,R))$ we have
$$
\lambda (p) < \abs{T_{x}f(v)} < \omega (p) \ .
$$
Let $\Omega = \max_{x\in M} \norm{T_{x}f}$.  
Given $a, b \in W^{u}(p,r) \cap \Lambda $ we choose paths $\alpha, \beta $ in the leaves $A, B$ containing $a, b$ that start at $a,b$, closely shadow $\xi $, and end at points $a^{\prime}, b^{\prime} \in W^{u}(p^{\prime}, r^{\prime})$.  The center holonomy map $h$  sends $a, b$ to $a^{\prime}, b^{\prime}$.   We parameterize the paths so that $ \beta (t) \in W^{u}(\alpha  (t), R)$ for $0 \leq  t \leq  1$.  See Figure~\ref{f.holonomy6}.
\begin{figure}[htbp]
\centering
\includegraphics[scale=.60]{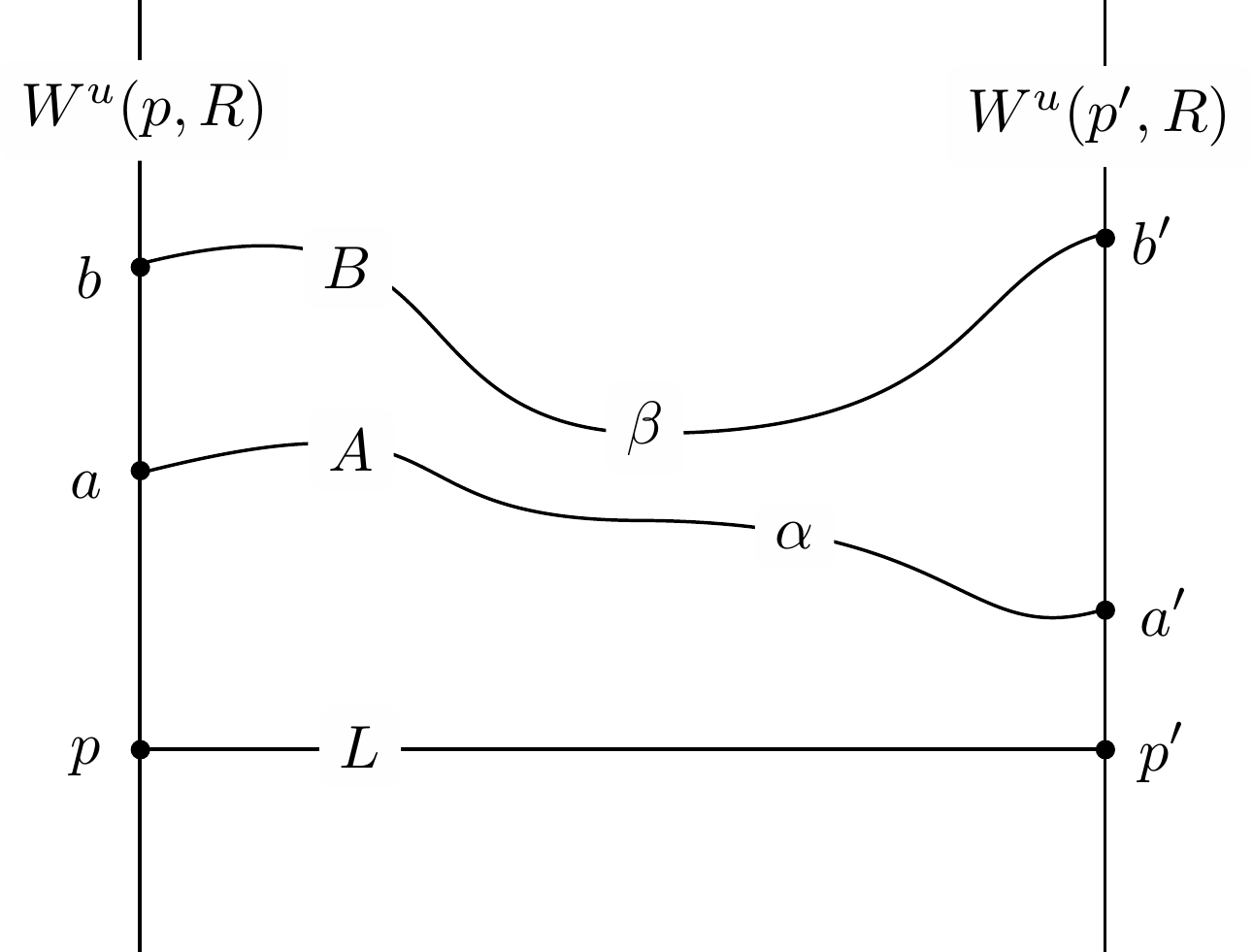}
\caption{The center holonomy inside a local center unstable manifold $W^{cu}(L, R)$.}
\label{f.holonomy6}
\end{figure}
 Here $0 < r < r^{\prime} <  R/2\Omega $ 
are sufficiently  small:  For each plaque $\rho $ in a fixed plaquation of $\mathcal{L}$, the local strong unstable manifolds $W^{u}(x,  R)$ with $x \in \rho $ give a tubular neighborhood of $\rho $ in the local center unstable manifold.       

Suppose that the center holonomy maps $h$ inside the center unstable manifolds fail    to be uniformly $\theta $-H\"older.  Then we can find such sequences $L_{n}$, $p_{n}$, $p_{n}^{\prime}$, $\xi _{n}$, $a_{n}$, $a_{n}^{\prime}$, $A_{n}$, $\alpha _{n}$, $b_{n}$, $b_{n}^{\prime}$, $B_{n}$,  $\beta _{n}$, $C_{n}$ such that 
$$
d_{n} ^{\prime} > C_{n}d_{n}^{\theta } \quad \textrm{and} \quad C_{n} \rightarrow  \infty 
$$
where $d_{n} = d(a_{n}, b_{n})$,  $d_{n}^{\prime} = d(a_{n}^{\prime}, b_{n}^{\prime})$, and distance is measured along the local strong unstable manifolds. Since $d_{n}^{\prime}$ is bounded (by $2r^{\prime}$) and $C_{n} \rightarrow  \infty$, we get $d_{n} \rightarrow  0$.   See Figure~\ref{f.holonomy7}.
\begin{figure}[htbp]
\centering
\includegraphics[scale=.60]{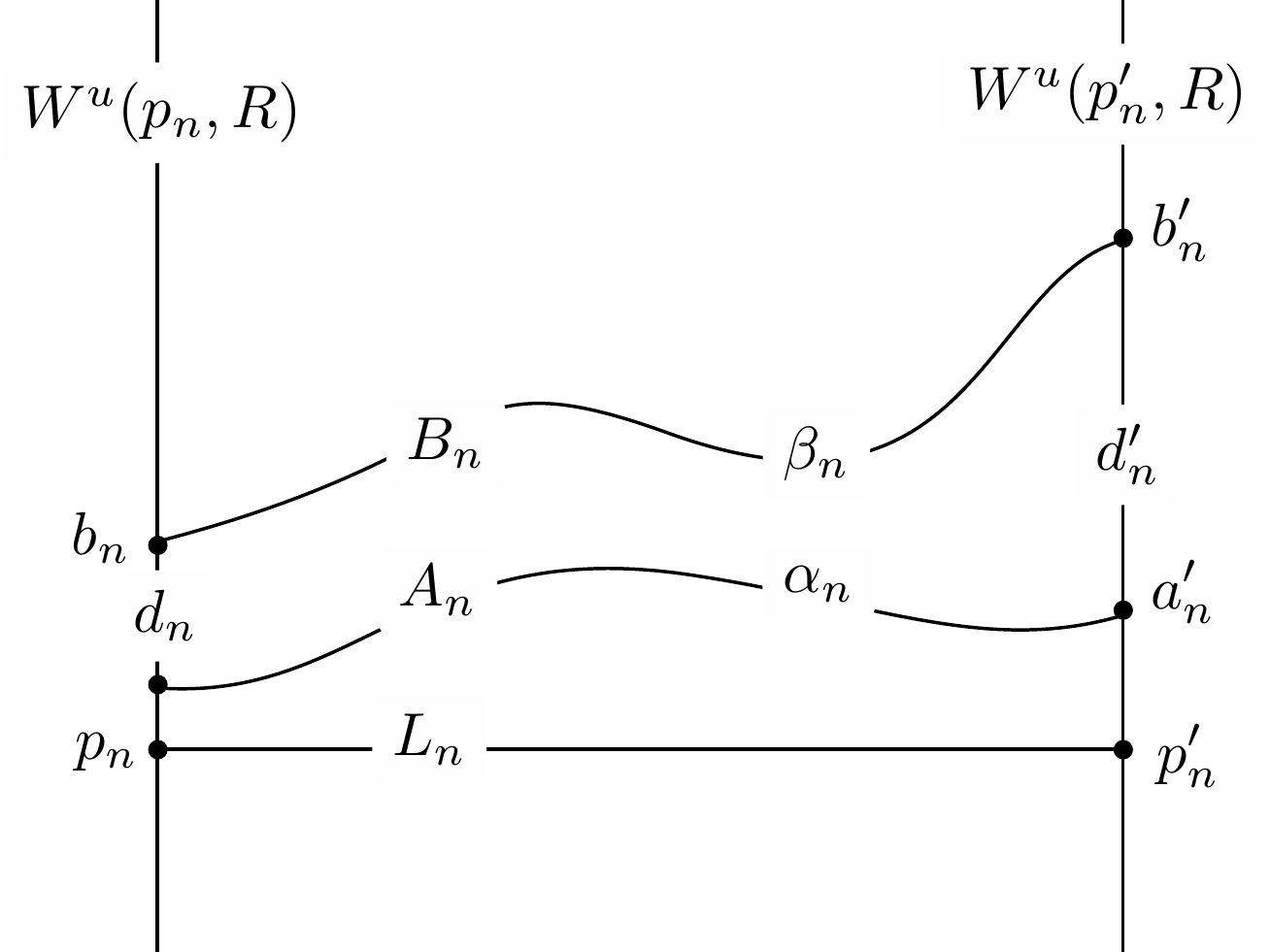}
\caption{$d_{n}$ is much less than $d_{n}^{\prime}$.}
\label{f.holonomy7}
\end{figure}
Composing everything with  $f^{k}$ produces sequences $L_{n,k}, \dots  , \beta _{n,k}$.  Set
$$
d_{n,k}(t) = d(\alpha_{n,k}(t), \beta _{n,k}(t)) \ .
$$
For each   $n$  let $k = k(n) $ be the smallest integer such that
\begin{itemize}

\item[(a)]
$0 \leq  t \leq  1$ and $0 \leq  j \leq  k $ imply 
$$
  \beta _{n,j}(t) \in W^{u}(\alpha _{n,j} (t), R) \ .
$$
\item[(b)]
There exists $t \in [0,1]$ such that $d_{n,k}(t) \geq  r$.
\end{itemize}
The original paths $\alpha_{n} , \beta_{n} $ satisfy (a),       $f$ expands the unstable manifolds by a factor between $\lambda $ and $\omega $, and $  \beta _{n}(t) \in W^{u}(\alpha _{n} (t), R)$, so $k$ exists.    
(Here we use the assumption that $\Omega r^{\prime} \leq R/2$.)   Let $t=T_{n,k}$ be the smallest $t$ such that $d (\alpha _{n,k}(t), \beta _{n,k} (t))   = r$.  We claim that for large $n$, $T_{n,k}$ exists and $0 < T_{n,k} \leq  1$.  See Figure~\ref{f.holonomy8}. \begin{figure}[htbp]
\centering
\includegraphics[scale=.60]{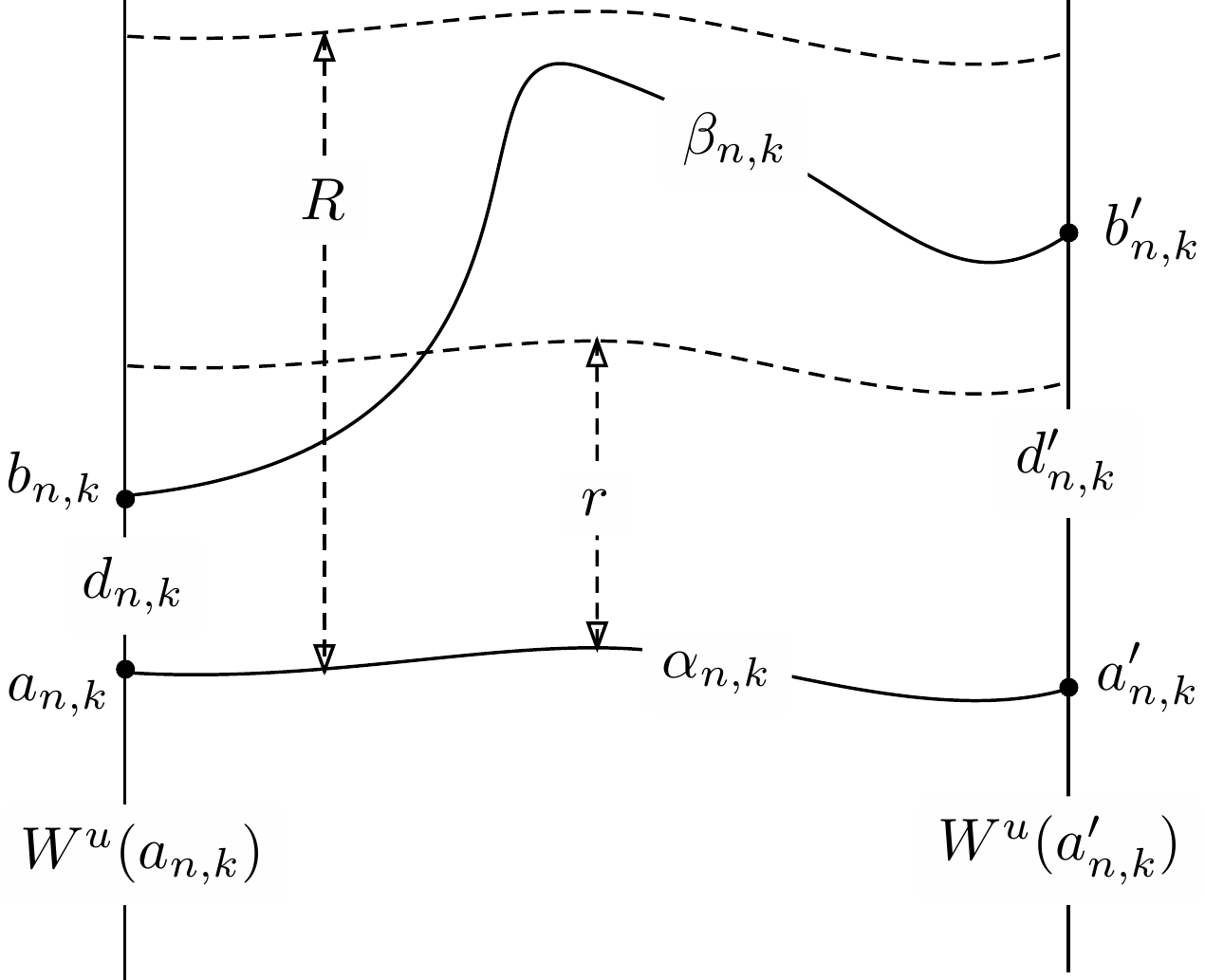}
\caption{For $k = k(n)$ we have $d(\alpha_{n,k}(t), \beta _{n,k}(t)) = r$ for some $t \in (0, 1]$.}
\label{f.holonomy8}
\end{figure}
This  is the heart of the proof.

Let $d_{n,j} = d (a_{n,j}, b_{n,j} )$ and $d_{n,j}^{\prime}  = d (a_{n,j}^{\prime}, b_{n,j}  ^{\prime} )$, where $a_{n,j} = f^{j}(a_{n})$, $a_{n,j}^{\prime} = f^{j}(a_{n}^{\prime})$, $b_{n,j}  = f^{j}(b_{n} ) $, $b_{n,j} ^{\prime}  = f^{j}(b_{n}^{\prime} )$, and $0 \leq  j \leq  k$.  
Then
$$
\lambda < \frac{d_{n,j+1}}{d_{n,j}} < \omega 
\qquad  \lambda < \frac{d_{n,j+1}^{\prime} }{d_{n,j}^{\prime} } < \omega
$$ 
An over-estimate for $d_{n,j}$ imagines the expansion occurs at the fast rate $\omega $.  This gives $d_{n,j} \leq  \omega ^{j}d_{n}$ as long as $d_{n,j} \leq  R$. Similarly, $d_{n,j}^{\prime} \geq \lambda ^{j}d_{n}^{\prime}$.   We claim that if $n$ is large and    $0 \leq  j \leq  k$ then we have $d_{n,j} < r $.   
This follows from formula manipulation.  First of all, the hypothesis   $\omega ^{\theta } < \lambda $ implies  
$$
\frac{\omega }{\lambda ^{1/\theta }} < 1 \ .
$$
Then $\lambda ^{j}d_{n}^{\prime}  \leq d_{n,j}^{\prime} \leq  R/2$ for $0 \leq  j \leq  k$ and $d_{n}^{\prime}  \geq C_{n}d_{n}^{\theta }$ imply that
\begin{equation*}
\begin{split}
d_{n,j} &\leq \omega ^{j}d_{n} \leq \omega ^{j} \Big(\frac{d_{n}^{\prime} }{C_{n}}\Big)^{1/\theta } 
\leq 
\omega ^{j} \Big(\frac{d_{n,j}^{\prime} }{\lambda ^{j}C_{n}}\Big)^{1/\theta }
\\
& \leq \Big(\frac{\omega }{\lambda ^{1/\theta }}\Big)^{j}\Big(\frac{R}{ 2C_{n}}\Big)^{1/\theta }   \ .
\end{split}
\end{equation*}
Since $R$ is fixed and   $C_{n} \rightarrow \infty$, this quantity tends to $0$ as $n\rightarrow \infty$.   Therefore, for all large $n$ and all $j$, $0 \leq  j \leq  k = k(n)$,  
$$
d_{n,j}  < r 
$$
and $d_{n,k} \rightarrow 0$ as $n \rightarrow \infty $.
 
The upshot is this: The first $k$ iterates of $f$ spread the pair $(a_{n}^{\prime}, b_{n}^{\prime} )$ apart to distance  $ \geq r$ but  don't spread the pair $(a_{n},b_{n} )$ apart much at all.  By the Intermediate Value Theorem there is a smallest $t \in (0,1]$ with $d_{n,k}(t) = r$.  This is  $ T_{n,k}  $.   
Then    we   linearly reparameterize $\alpha _{n,k}$ and $\beta _{n,k} $ by $t \rightarrow  t/T_{n, k}$ so that $T_{n,k}$ becomes $1$ and, using the same notation for the reparmeterized paths, we have  $(a_{n,k}^{\prime}, b_{n,k}^{\prime} ) =   (\alpha _{n,k}(1), \beta _{n,k} (1))$ and
$$
d(\alpha _{n,k}(1), \beta _{n,k} (1)) = r \ .
$$

Since the lamination is uniformly compact there is a subsequence having a lot of convergence.  For example, a subsequence of $f^{k(n)}(a_{n})$ converges to some $p \in \Lambda $.  Formally we should write   $f^{k(n_{\ell})}(a_{n_{\ell}})  \rightarrow  p$ as $\ell \rightarrow  \infty$ but we abbreviate it to $a_{m} \rightarrow  p$ as  $m =  (n_{\ell}, k(n_{\ell})) \rightarrow  \infty$.

Since $d(a_{m}, b_{m} ) \rightarrow 0$, $b_{m} $ also converges to $p$. Set 
$$
d_{m}(t) = d(\alpha _{m}(t), \beta _{m}(t)) \ .
$$
Then $d_{m}(0) \rightarrow  0$ and $d_{m}(1) = r$ as $m \rightarrow \infty$.  
 Let $P$ be the leaf of $\mathcal{L}$ through $p$, and let $\pi  : U \rightarrow P$ be a small $C^{1}$ tubular neighborhood of $P$ in $M$.    We   choose $\pi $ so that its fibers at $P$ are approximately parallel to $E^{us}_{P} = E^{u}_{P} \oplus E^{s}_{P}$ and have diameter $< r/2$.  (In fact, by the Whitney Extension Theorem we can find $\pi $ so that the $T_{x}(\pi ^{-1}(x)) = E^{us}_{x} $ for all $x \in P$.)

Theorem~\ref{t.DBAE} implies that $P$ has a   laminated neighborhood $N \subset \Lambda \cap U$, and $N$ is much smaller than $U$.  For each leaf $Q \subset  N$, $\pi  : Q \rightarrow  P$ is a covering map.  Since the leaves $A_{m}, B_{m}$  contain points near $P$, they are wholly contained in $N$, and they cover $P$ under $\pi $.  The points $\alpha _{m}(t), \beta _{m}(t)$ may not lie on a common $\pi $-fiber, but we can project $\beta _{m}(t)$ along the plaque of $B_{m}$ containing $\beta _{m}(t)$ to make this true.  Let $\beta _{m}^{\displaystyle *}(t)$ be the projected path and set  
$$
d_{m}^{\displaystyle *}(t)= d(\alpha _{m}(t), \beta _{m}^{\displaystyle *}(t)) \ .
$$
 Because the $\pi $-fibers are approximately tangent to $E^{us}_{P}$,   $W^{u}(\alpha _{m}(t))$ is approximately parallel to $E^{u}_{\alpha _{m}(t)}$, and  since
$$
E^{u}_{\alpha _{m}(t)} \subset E^{us}_{ \alpha _{m}(t)} \approx E^{us}_{\pi (\alpha _{m}(t))} \ ,
$$
we have  $d_{m}^{\displaystyle *}(t) \approx d_{m}(t)$ for $0 \leq  t \leq  1$.    Setting $b_{m}^{\displaystyle *} = \beta _{m}^{\displaystyle *}(1)$ gives
$$
d(a_{m}^{\prime}, b_{m}^{\displaystyle *}) = d^{\displaystyle *}_{m}(1)  \approx d_{m}(1) = d(a_{m}^{\prime}, b_{m}^{\prime}) = r
$$
which contradicts the fact that $a_{m}^{\prime}$ and $b_{m}^{\displaystyle *}$ lie in a set  of diameter $\leq  r/2$.   See Figure~\ref{f.rover2}. 
\begin{figure}[htbp]
\centering
\includegraphics[scale=.60]{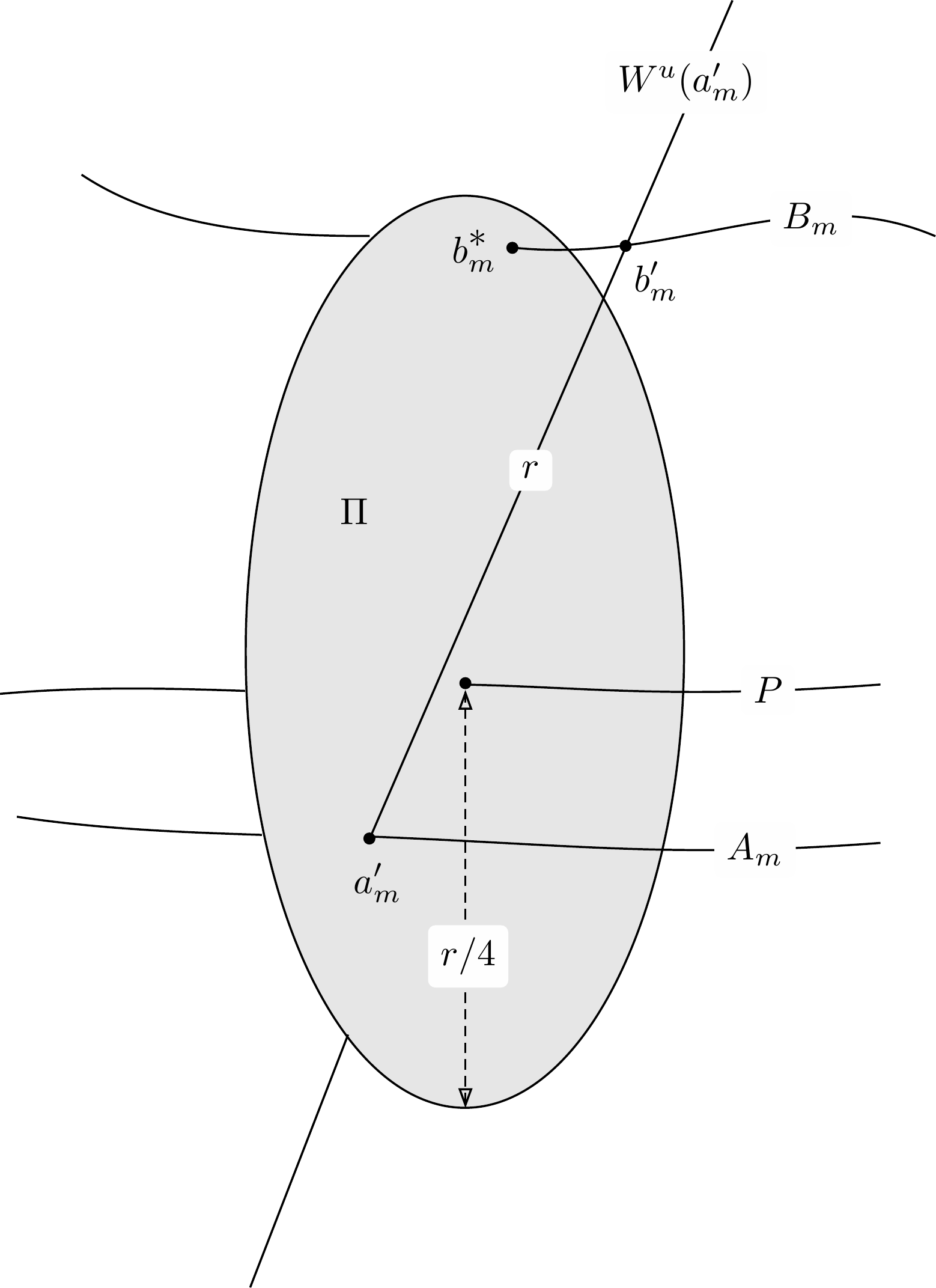}
\caption{The point $b_{m}^{\prime} =  \beta _{m}(1)$ projects along $B_{m}$ to $b_{m}^{\displaystyle *}$, so its distance to $a_{m}^{\prime}$ is approximately $r$, a contradiction to the fact that the pair $a_{m}^{\prime}, b_{m}^{\displaystyle *}$ lies in  a set $\Pi = \pi ^{-1}(\pi (a_{m}^{\prime}))$ of diameter $\leq  r/2$.} 
\label{f.rover2}
\end{figure}
Therefore the center holonomy maps along the center unstable manifolds are uniformly $\theta $-H\"older.       

Correspondingly,  the center holonomy maps along the center stable manifolds are uniformly $\theta $-H\"older   when $\nu  < \mu ^{\theta }$.  By the triangle inequality and dynamical coherence, the center holonomy  maps are uniformly $\theta $-H\"older when $\nu  < \mu ^{\theta }$ and $\widehat{\nu}  < \widehat{\mu} ^{\theta }$.    See Figure~\ref{f.triangle}.
\begin{figure}[htbp]
\centering
\includegraphics[scale=.60]{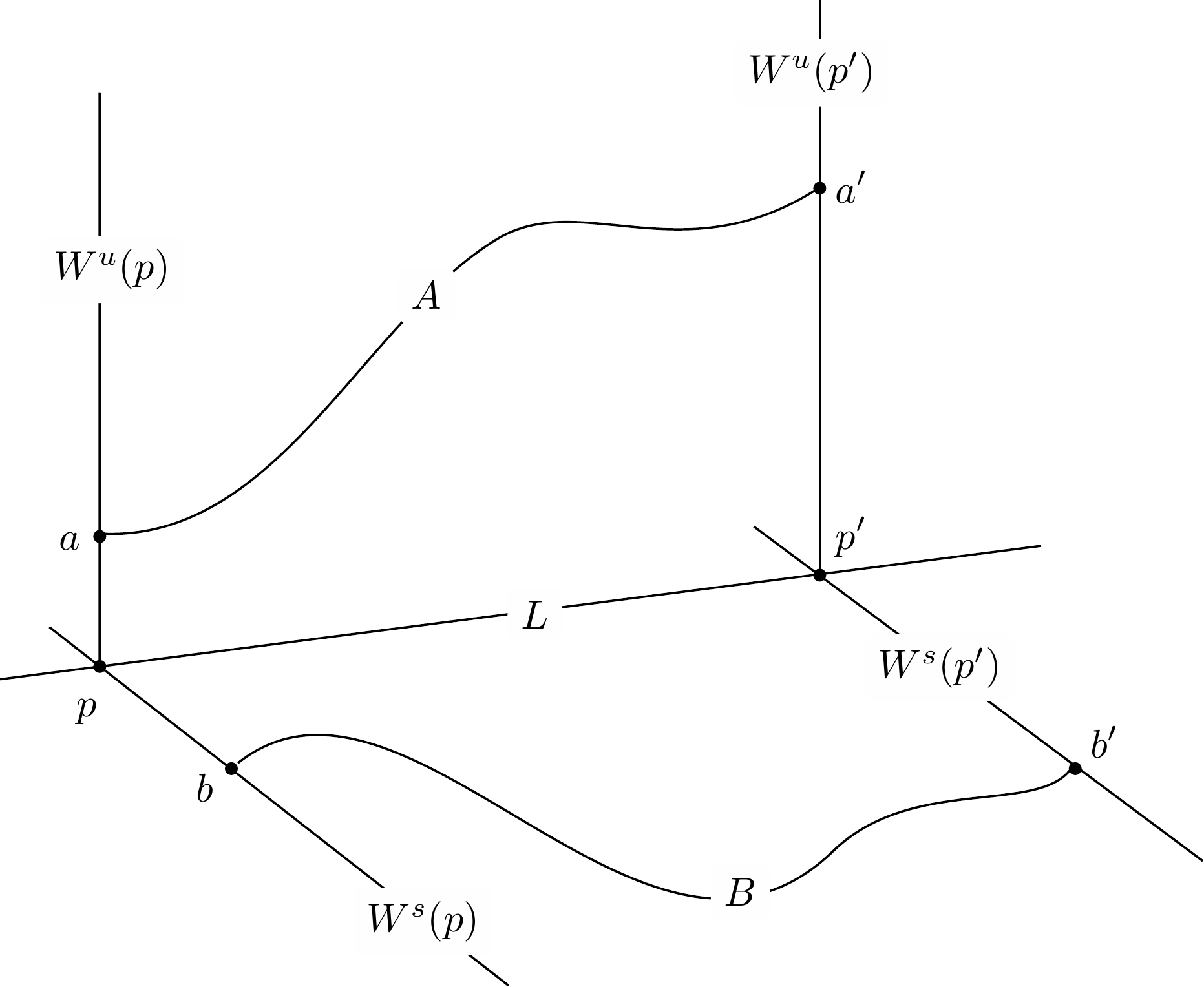}
\caption{$A, B$ are the center leaves through $a,b$.  The distance between $a^{\prime}$ and $b^{\prime}$ is no more than $d(a^{\prime}, p^{\prime}) + d(p^{\prime},b^{\prime})$ which is H\"older controlled.}
\label{f.triangle}
\end{figure}

   Proposition~\ref{p.pe} implies that $\mathcal{L}$ is plaque expansive, so a canonical leaf conjugacy $\mathfrak{h}_{g} : \mathcal{L} \rightarrow  \mathcal{L}_{g}$ exists when $g$ $C^{1}$-approximates $f$.  We claim that $\mathfrak{h}_{g}$ is $\theta  $-H\"older.

     Reverting to the suspension considerations used in the proof of Theorem~A, we have a $C^{1}$-small homotopy loop at $f$, $t \mapsto g_{t}$, where $0 \leq  t \leq  2$, $g_{0} = g_{2} = f$, and $g_{1 }= g$.  The suspension diffeomorphism $G : S^{ 1}\times  M \rightarrow  S^{ 1}\times  M$ is   defined as $G(t, x) = (t,g_{t}(x))$ where $S^{1}$ is the circle of circumference $2$.   $G$ $C^{1}$-approximates   the product diffeomorphism $F(t,x) = (t,f(x))$, which is normally hyperbolic at the product lamination $T \times \mathcal{L}$, while $G$ is normally hyperbolic at the leaf conjugate  suspension lamination $\mathcal{S}_{G}$.  The leaves of the latter are uniformly compact, so, according to what was proved above, the $\mathcal{S}_{G}$-holonomy    maps are $\theta $-H\"older.    According to Theorem~\ref{t.Amie1} and Addendum~\ref{a.lamination}, one  of these holonomy maps locally represents  a leaf conjugacy from $\mathcal{L}$ to $\mathcal{L}_{g}$, and this completes the proof.
        \end{proof}
   
      \begin{Rmk}
The geometric situation may be much more complex than a tubular neighborhood of $P$ with nearby leaves projecting diffeomorphically to $P$.  We need the H\"older estimate on transversals of uniformly positive radius. It is quite possible that $L$, $A$, and $B$ double back on themselves and each other,   repeatedly crossing a transversal.  Their various branches may lie much closer to   $\xi $ than $\alpha $ and $\beta $ do.  These other branches may shadow $\xi $ for a while and then leave its neighborhood.    The upshot is that we get $R$-sized tubular neighborhoods of the plaques but not of the leaves. See Remarks~\ref{k.Cat} and \ref{k.branches} of the next section. 
\end{Rmk}

   \section{Cautionary Remarks}
   \label{s.cautionary}
  
 \begin{Remark} 
 \label{k.translation}  
 It is natural to ask whether there is an ``Intersection Lemma" \`a la Theorem~\ref{t.intersection}
for leaf conjugacies:  For transverse foliations $\cF$ and $\cG$ intersecting in the foliation $\cH$, can one deduce from the existence of a H\"older continuous $\cF$-conjugacy and a H\"older continuous $\cG$-conjugacy the existence of a H\"older continuous $\cH$-conjugacy?  Such a general lemma would simplify considerably some of the arguments in this paper,
but it appears that such a result cannot hold in complete generality.   Here is a more detailed discussion.

In the course of proving Theorem~A, we showed directly in Proposition~\ref{p.A}
 that there exist  H\"older continuous leaf conjugacies for the center stable and center unstable
foliations. It is tempting to try to combine these leaf conjugacies to obtain directly a leaf conjugacy
for the intersection foliation $\mathcal{W}^{c} = \mathcal{W}^{cu} \cap \mathcal{W}^{cs}$.   The issue is that tubular neighborhood structures for the two conjugacies -- which consist of local unstable and stable manifolds -- in general are not jointly integrable.  They do not 
combine to give a tubular neighborhood structure for the intersection foliation.  One can choose a different
tubular neighborhood structure for $\mathcal{W}^{c}$,   one that is locally bifoliated by tubular neighborhoods for
$\cW^{cs}$ and $\cW^{cu}$, but then the question arises whether H\"older continuity of the leaf conjugacy
for one tubular neighborhood structure implies  H\"older continuity of the leaf conjugacy
for {\em every} tubular neighborhood structure.  The answer to this question, at least when
posed in the setting of abstract foliations, is ``no'' as the following example shows.

 Let $\mathcal{F}_{0}$ foliate   the strip $\RR \times [0, 1]$ in
$\RR^2$ by horizontal curves in such a way that the holonomy maps between 
vertical transversals are not H\"older continuous.  (As above, the proof of   Theorem~4.3 in \cite{PSW97} shows that the choice of transversals has no effect on H\"olderness of holonomy, so the holonomy maps with respect to all other transversals are also   non-H\"older.)  The leaf of $\cF_0$ through $(0,y)$
is given by the graph of a smooth function $x \mapsto g_0(x,y)$, where
for fixed $x \not= 0$,  the map  $y\mapsto g_0(x,y)$ is continuous but {\em not} H\"older continuous.
 Arrange as well that the top and bottom leaves of $\cF_0$
are horizontal, i.e.,
$g_0(x,0) = 0$ and $g_0(x, 1) = 1$ for all $x$.  Extend $g_{0}$ to $\mathbb{R}^{2}$ by setting
$$
g(x,y + n  ) = g_{0}(x,y) + n 
$$
when $y \in [0,1 ]$ and $n \in \mathbb{Z}$.  Let $\mathcal{F}$ be the foliation of $\mathbb{R}^{2}$ whose leaf through $(0,y)$ is $\{(x,g(x,y)) : x \in \mathbb{R}\}$.  See Figure~\ref{f.verticaltranslation}.
\begin{figure}[htbp]
\centering
\includegraphics[scale=.60]{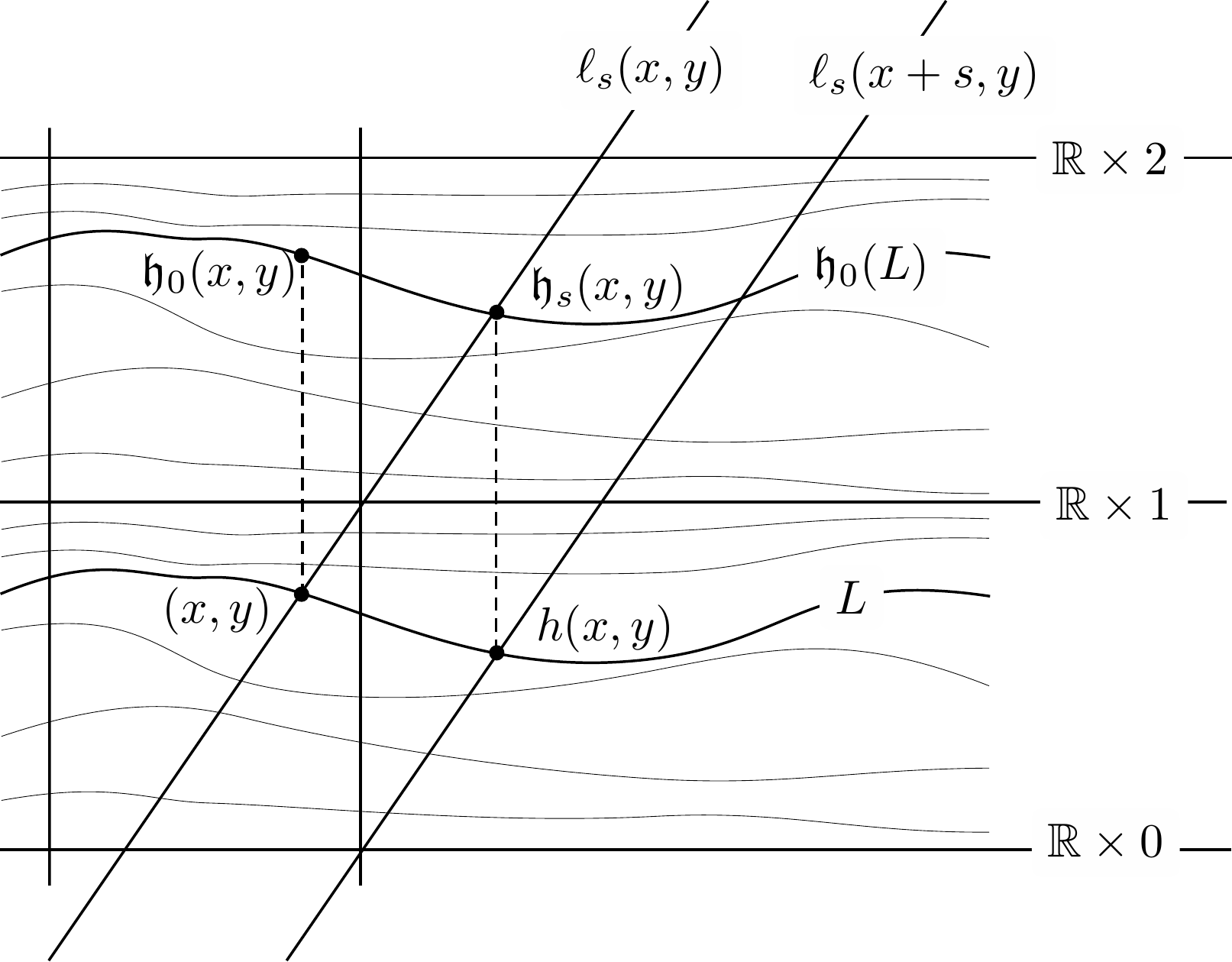}
\caption{$\mathfrak{h}_{0}$ is vertical translation by $1 $.  It commutes with the non-H\"older holonomy $h $ from $\ell_{s}(x,y)$ to $\ell_{s}(x+s, y)$.}
\label{f.verticaltranslation}
\end{figure}

Vertical translation $(x,y) \mapsto (x,y+1 )$ is a smooth leaf conjugacy $\mathfrak{h}_{0} : \mathcal{F} \rightarrow  \mathcal{F}$.  It respects the vertical normal bundle.  But if we use a different normal bundle things go bad.   Let $\mathcal{N}_{s}$ be the normal bundle whose fiber through $(x,y)$ is the line  
$$
\ell_{s}(t, x,y) = (x + st,y+t)
$$
with vertical slope $s \not= 0$.  Expressing the smooth leaf conjugacy $\mathfrak{h}_{0}$ with respect to $\mathcal{N}_{s}$ gives a leaf conjugacy $\mathfrak{h}_{s} : \mathcal{F} \rightarrow  \mathcal{F}$.  It   is a homeomorphism of $\mathbb{R}^{2}$, smooth along the leaves of $\mathcal{F}$, but it is not transversally  H\"older because
$$
\mathfrak{h}_{s}(x,y) = \mathfrak{h}_{0} \circ  h(x,y)
$$
where $h : \ell_{s}(x,y) \rightarrow \ell_{s}(x+s,y)$ is $\mathcal{F}$-holonomy.  See Figure~\ref{f.verticaltranslation}.

Hence, one needs to know something about holonomies to say anything about
leaf conjugacies.  And the smoothness of the conjugacy depends on the
choice of tubular neighborhood structure if the foliation itself is not good.  
  \end{Remark}

   \begin{Remark}
   \label{k.goodbad}
  The hypothesis in the   Intersection Lemma (Lemma~\ref{l.Hproduct}) is unnecessarily strong.  It requires all the holonomy maps of $\mathcal{F}$ and $\mathcal{G}$ to be H\"older in order that the intersection foliation $\mathcal{H} = \mathcal{F} \cap \mathcal{G}$ has  H\"older holonomy.    The following example shows we only need \emph{some} of the holonomy maps to be   H\"older.

Consider the unit cube $I^{3}$ with transverse foliations $\mathcal{F}_{0}$, $\mathcal{G}$ where the leaves of $\mathcal{F}_{0}$ are the horizontal squares $I^{2} \times  z$ and the leaves of $  \mathcal{G}$ are the vertical squares $x \times I^{2}$.  The intersection foliation   has segment leaves $x\times I \times z$.  Approximate $\mathcal{F}_{0}$ by a foliation $\mathcal{F} $ which meets every transversal $\tau  = I \times  y \times  I$ in a family of curves shown in Figure~\ref{f.T3}. Choose $\mathcal{F} $ so that its leaves are smooth but the Poincar\'e map $z \mapsto \varphi (z)$  of the flow shown on the transversals is non-H\"older.
\begin{figure}[htbp]
\centering
\includegraphics[scale=.60]{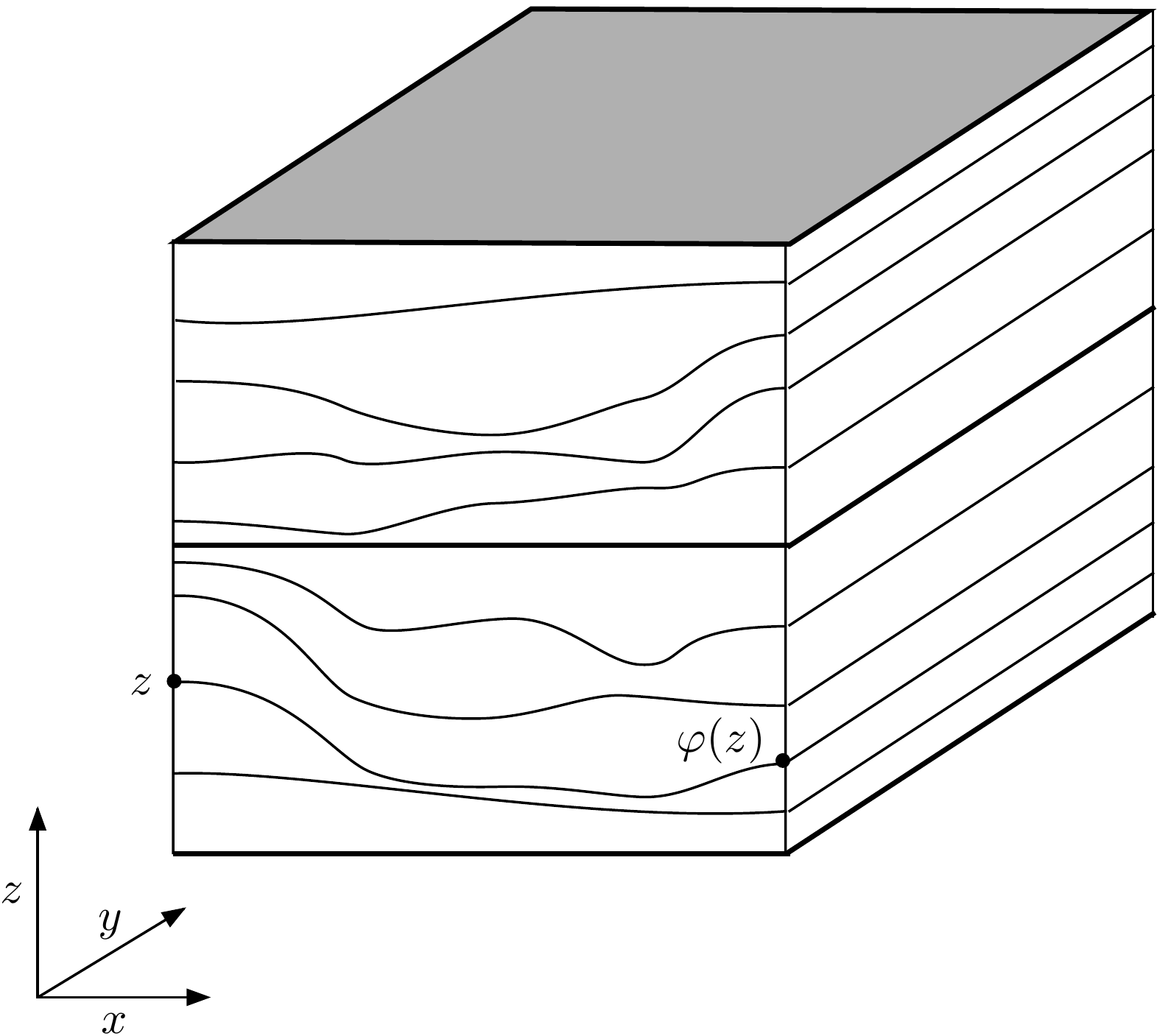}
\caption{The intersection of $ \mathcal{F} $ with the faces of the cube.}
\label{f.T3}
\end{figure}

Under the identifications that convert $I^{3}$ to the 3-torus, we get foliations $\mathcal{F} $ and $\mathcal{G}$.  The leaves of $\mathcal{G}$ are ``vertical''   2-tori.   $\mathcal{F} $ has two ``horizontal'' 2-torus leaves $A$, $B$.  They correspond to the top/bottom face of the cube and the middle slice.  The other leaves are cylinders that limit on $A$ and $B$.  The intersection foliation $\mathcal{H}$ consists of circles $x \times  S^{1} \times  z$.  The $\mathcal{H}$-holonomy is the identity map, but the $\mathcal{F} $-holonomy  includes the non-H\"older map $\varphi $.  Thus $\mathcal{F}$ and $\mathcal{G}$ can have some bad holonomy although $\mathcal{H} = \mathcal{F} \cap \mathcal{G}$ has all good holonomy.
\end{Remark}
   
  \begin{Remark}
In the proof of Theorem~B we derived a contradiction from the assumption that the $\mathcal{L}$-holonomy is not H\"older.  This involved the local center unstable and local center stable laminations.  It might have seemed more natural to prove that $\mathcal{W}^{cu}$ and $\mathcal{W}^{cs}$ are H\"older and apply the Intersection Lemma to deduce that $\mathcal{L} = \mathcal{W}^{cu} \cap \mathcal{W}^{cs}$ is H\"older.  However, $\mathcal{W}^{cu}$ and $\mathcal{W}^{cs}$ are only locally invariant and locally normally hyperbolic.  The Intersection Lemma does not directly apply in this local situation.   
\end{Remark}

 \begin{Remark}
 \label{k.Cat}
   For quite a while we were confused about the relation between leaf expansivity and plaque expansivity for normally hyperbolic foliations in the uniformly compact case.  If there is a $\delta  > 0$ such that for each pair of distinct leaves, there  is an iterate $f^{k}$ of the normally hyperbolic diffeomorphism such that the distance between the $f^{k}$-iterates of the leaves exceeds $\delta $ then $f$ is \textbf{leaf expansive}.  A skew product (with compact fiber as in Theorem~\ref{t.IN}) over a hyperbolic set has this property.  For the base map on the hyperbolic set is orbit expansive.  
     It is obvious that leaf expansivity implies plaque expansivity.  The converse, however, is false.
   
   The example occurs on a $3$-manifold.  A similar example was used for other purposes by Bonatti and Wilkinson in \cite{BonW}.    Let $M$ be   $T^{2}\times  [0,1]$ with  $(x,y,0)$ identified to $(-x,-y,1)$.  $M$ is smooth and is double covered by   the $3$-torus.  The vertical foliation $\{p \times  [0,1] : p \in T^{2}\}$ descends to a smooth, uniformly compact foliation $\mathcal{F}$ of $M$ by circles.

The standard Cat Map $f_{A} : T^{2}  \rightarrow T^{2}$ given by the matrix
 $$
 A = 
\begin{bmatrix}
   2    &  1     
\\
   1   &   1   
\end{bmatrix}
$$ 
lifts to a diffeomorphism $f : M \rightarrow  M$,
$$
f(z,t) = (f_{A}(z),t) \ ,
$$
since $A(-v) = -A(v)$ for all $v \in \mathbb{R}^{2}$.  It is normally hyperbolic and dynamically coherent  at $\mathcal{F}$. 
We claim that $f$ is plaque expansive but not leaf expansive.\footnote{$\mathcal{F}$ is a Seifert fibration whose leaf space is the 2-sphere $T^2/_{  (x,y)\sim(-x,-y)}$.  The  leaf map is a ``two pronged pseudo Anosov" map on $S^{2}$.  The leaf $L_{0}$ through the origin is a circle of length $1$.  It is fixed by $f$.  The leaves   $  L_{1}, L_{2}, L_{3}$  through  $  p_{1} = (1/2,0), p_{2} = (0,1/2), p_{3} = (1/2, 1/2)$ are also circles of length $1$.  They are permuted cyclically by $f$ as $L_{1} \rightarrow L_{2} \rightarrow  L_{3} \rightarrow  L_{1}$.   All the other leaves are circles of length $2$.  The unstable and stable manifolds of the four special leaves are M\"obius bands and the rest are cylinders.}

In \cite{HPS77} it is proved that every smooth normally hyperbolic foliation is plaque expansive, so $\mathcal{F}$ is plaque expansive.  A direct proof appears below.

To check that $f$ is not leaf expansive, consider   points $p, -p  \in T^{2}$ near the origin.  The   $\mathcal{F}$-leaf through $(p,0)$ is a circle of length $2$ in $M$ consisting of $p\times [0,1]$ and $(-p) \times  [0,1]$.   The local $f_{A}$-invariant manifolds of $p, -p$ meet at points $q, -q$ as shown in Figure~\ref{f.cat}.
\begin{figure}[htbp]
\centering
\includegraphics[scale=.60]{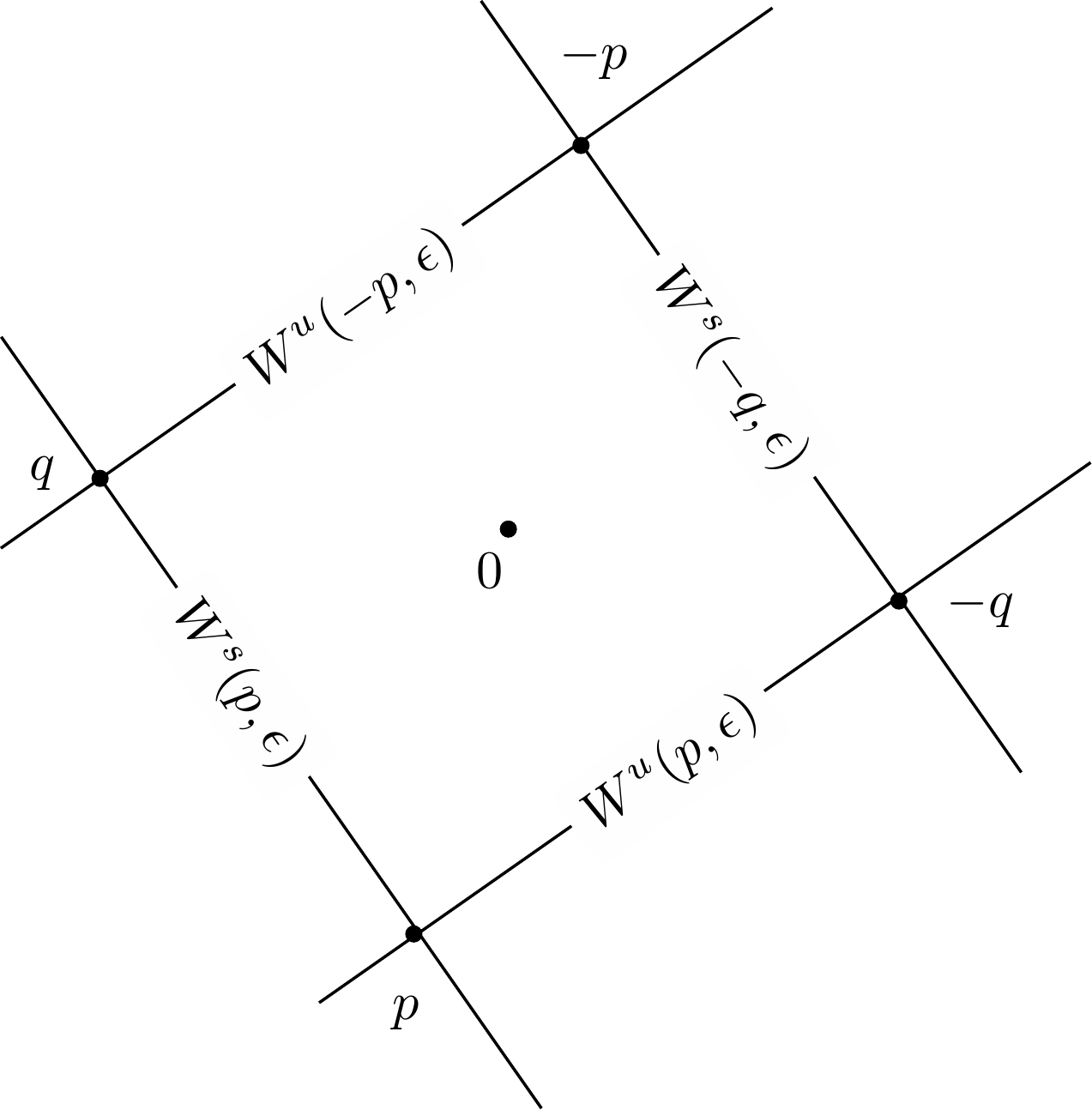}
\caption{The Cat Map separates orbits of single points but does not separate orbits of pairs of points such as $\{p,-p\}$ and $\{q,-q\}$.}
\label{f.cat}
\end{figure}The leaves $P,Q \in \mathcal{F}$ corresponding to $\{p,-p\}$   and $\{q,-q\}$ fail to separate under $f$-iteration.  For under forward iterates, $f_A^{k}(p)$ and $f_A^{k}(q)$ are asymptotic, while under reverse iteration $f_A^{k}(p)$ and $f_A^{k}(-q)$ are asymptotic.  

Here is a sketch of a direct proof that $f$ is plaque expansive.  Take non-overlapping, nearby plaques $\rho , \sigma  $   in leaves $P, Q$.  (The leaves can be equal without the plaques overlapping.)  This gives plaques 
$$
\xi = W^{cu}(\rho ,\epsilon ) \cap W^{cs}(\sigma , \epsilon ) \qquad \eta = W^{cu}(\sigma , \epsilon ) \cap W^{cs}(\rho , \epsilon ) \ .
$$   
Under forward $f$-iteration, $\xi $ and $\rho $ separate while $\eta $ and $\sigma $ are asymptotic.  Under reverse $f$-iteration it is the opposite.  See Figure~\ref{f.catplaques}.
\begin{figure}[htbp]
\centering
\includegraphics[scale=.60]{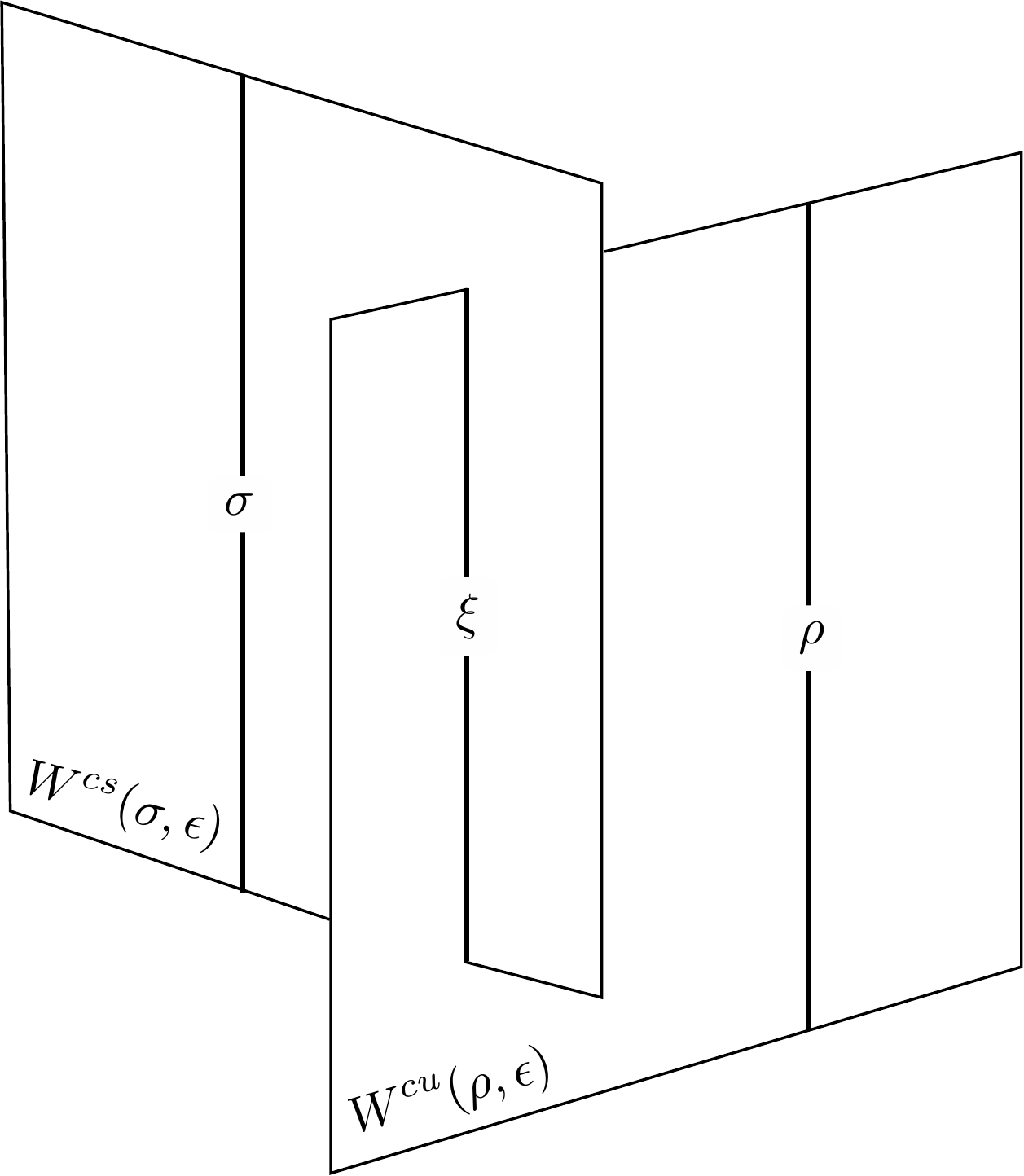}
\caption{Plaque local product structure.  The center unstable and center stable manifolds of nearby plaques intersect in plaques of approximately the same size.}
\label{f.catplaques}
\end{figure}Let $(\rho _{k})$ and $(\sigma _{k})$ be plaque orbits starting at $\rho $ and $\sigma $.  If $\sigma$ meets  $W^{cu}(\rho ,\epsilon )$ then $\sigma  \approx \xi $ and $d(\rho _{k}, \sigma _{k}) > \delta $ for a suitable $k > 0$. (By approximate equality $\sigma  \approx \xi $ we mean that $\sigma  \cap \xi $ is a plaque of approximately the same size as $\sigma $ and $\xi $.)    If $\sigma$ meets $W^{cs}(\rho ,\epsilon )$ then $\sigma  \approx \eta $ and $d(\rho _{\ell}, \sigma _{\ell}) > \delta  $ for a suitable $\ell < 0$.   Finally, if $\sigma $ meets neither $W^{cu}(\rho , \epsilon )$ nor   $W^{cs}(\rho , \epsilon )$ then $d(\rho _{k}, \sigma _{k}) > \delta$  for a suitable $k > 0$ and $d(\rho _{\ell}, \sigma _{\ell}) > \delta $ for a suitable $\ell < 0$.  

\end{Remark}

\begin{Remark}
\label{k.branches}
   A phenomenon that can occur with  normally hyperbolic, uniformly compact foliations is that the local center unstable manifold of a leaf can contain multiple branches of that leaf and other leaves.  This occurs in the previous example when the leaf lies in the local center unstable manifold of one of the special leaves  --  the circles of length $1$.   It is therefore difficult to assert in general that ``under forward $f$-iteration, the center unstable manifold is overflowing.''

\end{Remark}

   \begin{Remark}
   
   As remarked above, each leaf of a uniformly compact foliation has a tubular neighborhood, but   the radii of the tubular neighborhoods need not be bounded away from zero.  It is tempting to expect that if these radii are indeed bounded away from zero then the foliation is very nearly a skew product.

\end{Remark}

 \begin{Remark}
 \label{k.dc}
  Dynamical coherence was used in the proofs of plaque expansivity for normally hyperbolic, uniformly compact laminations (Proposition~\ref{p.pe}) and H\"olderness of the leaf conjugacy (Theorem~B).  It appears to be a challenging task to see whether dynamical coherence is really necessary.  The question is related to the concept in \cite{Pablo} of a foliation being complete.  This means that nearby leaves do not splay apart infinitely, as do the orbits of an Anosov flow.  Rather, they are somewhat parallel. Obviously, the leaves of a uniformly compact foliation have this completeness  property, but we do not know about the    intersections of their center unstable and center stable manifolds.  The dynamical coherence assumption circumvents the problem.
\end{Remark}

 \begin{Ups}
The structure of uniformly compact, normally hyperbolic foliations is yet to be well understood.
\end{Ups}

\section*{Acknowledgements}
We thank Danijela Damjanovi{\'c},  David Fisher, Andy Hammerlindl, Boris Hasselblatt, Anatole Katok, Olga Romaskevich, Ralf Spatzier, and  Andrew T\"or\"ok and the referee  for helpful conversations and comments.

   \bibliographystyle{plain}

\end{document}